\newtheorem{question}{Question}
\theoremstyle{plain}
\newtheorem{theorem}{Theorem}
\newtheorem*{remark}{Remark}
\newtheorem{lemma}[theorem]{Lemma}
\newtheorem{corollary}[theorem]{Corollary}
\newtheorem{conjecture}{Conjecture}
\newtheorem{proposition}[theorem]{Proposition}
  \renewcommand{\qed}{\hfill\ensuremath{\blacksquare}}
 \newenvironment{proofof}[1]{{\bf {\bf Proof of #1.}}}{ \hfill \qed}
\newlength{\widebarargwidth}
\newlength{\widebarargheight}
\newlength{\widebarargdepth}
\DeclareRobustCommand{\widebar}[1]{%
  \settowidth{\widebarargwidth}{\ensuremath{#1}}%
  \settoheight{\widebarargheight}{\ensuremath{#1}}%
  \settodepth{\widebarargdepth}{\ensuremath{#1}}%
  \addtolength{\widebarargwidth}{-0.3\widebarargheight}%
  \addtolength{\widebarargwidth}{-0.3\widebarargdepth}%
  \makebox[0pt][l]{\hspace{0.3\widebarargheight}%
    \hspace{0.3\widebarargdepth}%
    \addtolength{\widebarargheight}{0.3ex}%
    \rule[\widebarargheight]{0.95\widebarargwidth}{0.1ex}}%
  {#1}}
\long\def\@makecaption#1#2{
        \vskip 0.8ex
        \setbox\@tempboxa\hbox{\small {\bf #1:} #2}
        \parindent 1.5em  %% How can we use the global value of this???
        \dimen0=\hsize
        \advance\dimen0 by -3em
        \ifdim \wd\@tempboxa >\dimen0
                \hbox to \hsize{
                        \parindent 0em
                        \hfil 
                        \parbox{\dimen0}{\def\baselinestretch{0.96}\small
                                {\bf #1.} #2
                                %%\unhbox\@tempboxa
                                } 
                        \hfil}
        \else \hbox to \hsize{\hfil \box\@tempboxa \hfil}
        \fi
        }
\newcommand{\half}{\frac{1}{2}}
\newcommand{\normal}{\ensuremath{\mathcal{N}}}
\newcommand{\bigo}{\ensuremath{\mathcal{O}}}
\newcommand\smallo{
  \mathchoice
    {{\scriptstyle\mathcal{O}}}% \displaystyle
    {{\scriptstyle\mathcal{O}}}% \textstyle
    {{\scriptscriptstyle\mathcal{O}}}% \scriptstyle
    {\scalebox{.7}{$\scriptscriptstyle\mathcal{O}$}}%\scriptscriptstyle
  }
\renewcommand{\P}{\operatorname{\mathbb{P}}}
\newcommand{\E}{\operatorname{\mathbb{E}}}
\DeclareMathOperator{\var}{var}
\newcommand{\R}{\mathbb{R}}
\newcommand{\Rp}{\mathbb{R}_{+}}
\newcommand{\indi}{\mathds{1}}
\newcommand{\imnb}{\mathfrak{i}}
\newcommand{\rmd}{\mathrm{d}}
\newcommand{\twonorm}[1]{\left\|#1\right\|_{\ell_2}}
\newcommand{\abs}[1]{\left|#1\right|}
\def\BC{\begin{center}}
\def\EC{\end{center}}
\def\BIT{\begin{itemize}}
\def\EIT{\end{itemize}}
\def\BET{\begin{enumerate}}
\def\EET{\end{enumerate}}
\def\BEQ{\begin{equation}}
\def\EEQ{\end{equation}}
\long\def\comment#1{}
\renewcommand{\intercal}{{\bm \top}}
\newcommand{\RS}{\mathsf{RS}}  
\newcommand{\KL}{\mathsf{KL}}
\newcommand{\TV}{\mathsf{TV}}  
\newcommand{\erfc}{\mathsf{erfc}} 
\newcommand{\err}{\mathsf{err}} 
\newcommand{\utt}{\textup{\texttt{u}}}
\newcommand{\vtt}{\textup{\texttt{v}}}
\newcommand{\y}{\bm{y}}
\newcommand{\w}{\bm{w}}
\renewcommand{\u}{\bm{u}}
\renewcommand{\v}{\bm{v}}
\newcommand{\Y}{\bm{Y}}
\newcommand{\W}{\bm{W}}
\begin{document}
    
\title{\bf{\LARGE{Detection limits in the high-dimensional spiked rectangular model}}}
\author{Ahmed El Alaoui\thanks{Electrical Engineering \& Computer Sciences, UC Berkeley, CA. Email: elalaoui@berkeley.edu}
\and
Michael I. Jordan\thanks{Electrical Engineering \& Computer Sciences, Statistics, UC Berkeley, CA.}
}

\date{}
\maketitle

\vspace*{-.3in} 
\begin{abstract}
We study the problem of detecting the presence of a single unknown spike in a rectangular data matrix, in a high-dimensional regime where the spike has fixed strength and the aspect ratio of the matrix converges to a finite limit. This setup includes Johnstone's spiked covariance model. We analyze the likelihood ratio of the spiked model against an ``all noise" null model of reference, and show it has asymptotically Gaussian fluctuations in a region below---but in general not up to---the so-called BBP threshold from random matrix theory. Our result parallels earlier findings of \cite{onatski2013asymptotic} and \cite{johnstone2015testing} for spherical spikes. We present a probabilistic approach capable of treating generic product priors. In particular, sparsity in the spike is allowed. Our approach is based on Talagrand's interpretation of the cavity method from spin-glass theory. The question of the maximal parameter region where asymptotic normality is expected to hold is left open. This region is shaped by the prior in a non-trivial way. We conjecture that this is the entire paramagnetic phase of an associated spin-glass model, and is defined by the vanishing of the replica-symmetric solution of \cite{lesieur2015mmse}.      
\end{abstract}

%\begin{keywords}
%{\small \noindent\textbf{Keywords:} Spiked random matrix models, high-dimensionality, hypothesis testing, likelihood ratio fluctuations, spin glasses, replica symmetry, the cavity method.} 
%\end{keywords}

\section{Introduction}
The problem of detecting a signal of low-rank structure buried inside a large noise matrix has received enormous attention in the past decade. 
Prominent examples of this problem include the so-called \emph{spiked} or \emph{deformed ensembles} from random matrix theory~\citep{peche2014ICM}. It is particularly interesting to study such problems in the high-dimensional setting where the signal strength is comparable to the noise. This models practical situations in modern data analysis where one wishes to make more complex inferences about a fainter signal as the amount of data accrues.  In this paper we are concerned with the problem of testing the presence of a single weak spike in the data against an ``all-noise" null hypothesis of reference. 

Concretely we consider the observation of an $N \times M$ matrix of the form
\begin{equation}\label{spiked_model}
\Y = \sqrt{\frac{\beta}{N}} \u\v^\intercal + \W, 
\end{equation}
where $\u$ and $\v$ are unknown factors and $\W$ is a matrix with i.i.d.\ noise entries, and we want to test whether $\beta >0$ or $\beta =0$. 
We will assume the noise is standard Gaussian. The parameter $\beta$ represents the strength of the spike, and we assume a high-dimensional setting where $M/N \to \alpha$. The case $\u = \v$ and $\W$ symmetric is referred to as the \emph{spiked Wigner model}.       
When the factors are independent, model~\eqref{spiked_model} can be viewed as a linear model with additive noise and scalar random design:
\[\y_j = \widebar{\beta} v_j \u + \w_j,\] with $1 \le j \le M$, $\widebar{\beta} = \sqrt{\beta/N}$. Assuming $v_j$ has zero mean and unit variance, this is a model of \emph{spiked covariance}: the mean of the empirical covariance matrix $\widehat{\bm \Sigma} = \frac{1}{M}\sum_{j=1}^M \y_j\y_j^\intercal$ is a rank-one perturbation of the identity: $\bm{I}_N + \frac{\beta}{N}\u\u^\intercal$.

The introduction of a particular spiked covariance model by \cite{johnstone2001distribution}---one corresponding to the special case $v_j \sim \normal(0,1)$---has provided the foundations for a rich theory of Principal Component Analysis (PCA), in which the performance of several important tests and estimators is by now well understood~\citep[see, e.g.,][]{ledoit2002some,paul2007asymptotics,nadler2008finite,johnstone2009consistency,amini2008high,berthet2013optimal,dobriban2017sharp}. Parallel developments in random matrix theory have unveiled the existence of sharp transition phenomena in the behavior of the spectrum of the data matrix, where for a spike of strength above a certain \emph{spectral} threshold, the top eigenvalue separates from the remaining eigenvalues which are packed together in a ``bulk" and thus indicates the presence of the spike; below this threshold, the top eigenvalue converges to the edge of the bulk. See \cite{peche2006largest,feral2007largest,capitaine2009largest,benaych2011eigenvalues,benaych2012singular} for results on low-rank deformations of Wigner matrices, and \cite{baik2005phase,baik2006eigenvalues,bai2012sample,bai2008central} for results on spiked covariance models. More recently, an intense research effort has been undertaken to pin down the fundamental limits for both estimating and detecting the spike. 

In a series of papers \citep{korada2009exact,krzakala2016mutual,barbier2016mutual,deshpande2016asymptotic,lelarge2016fundamental_colt,miolane2017fundamental}, the error of the Bayes-optimal estimator has been completely characterized for additive low-rank models with a separable (product) prior on the spike. In particular, these papers confirm an interesting phenomenon discovered by~\cite{lesieur2015mmse,lesieur2015phase}, based on plausible but non-rigorous arguments: for certain priors on the spike, estimation becomes possible---although computationally expensive---below the spectral threshold $\beta=1$. More precisely, the posterior mean overlaps with the spike in regions where the top eigenvector is orthogonal to it. \cite{lesieur2017constrained} provides a full account of these phase transitions in a myriad of interesting situations, the majority of which still await rigorous treatment.  
As for the testing problem, \cite{onatski2013asymptotic,onatski2014signal} and \cite{johnstone2015testing} considered the spiked covariance model for a uniformly distributed unit norm spike, and studied the asymptotics of the likelihood ratio (LR) of a spiked alternative against a spherical null. They showed that the log-LR is asymptotically Gaussian below the spectral threshold  $\alpha\beta^2=1$ \citep[which in this setting is known as the BBP threshold, after][]{baik2005phase}, while it is divergent above it.

However their proof is intrinsically tied to the assumption of a spherical prior. Indeed, by rotational symmetry of the model, the LR depends only on the spectrum, the joint distribution of which is available in closed form. A representation of the LR in terms of a contour integral is then possible (in the single spike case), which can then be analyzed via the method of steepest descent. In a similar but unrelated effort,~\cite{baik2016fluctuations,baik2017fluctuations,baik2017fluctuations_bipartite} studied the fluctuations of the free energy of spherical, symmetric and bipartite versions of the Sherrington--Kirkpatrick (SK) model. This free energy coincides with the log-LR associated with the model~\eqref{spiked_model} for a choice of parameters. The sphericity assumption is again key to their analysis, and both approaches require the execution of very delicate asymptotics and appeal to advanced results from random matrix theory. 
 
In this paper we consider the case of separable priors: we assume that the entries of $\u$ and $\v$ are independent and identically distributed from base priors $P_{\utt}$ and $P_{\vtt}$, respectively, both having bounded support\footnote{Boundedness is required for technical reasons. This rules out the case where one factor is Gaussian.}. We prove fluctuation results for the log-LR in this setting with entirely different methods than used for spherical priors. In particular, our proof is more probabilistic and operates through general principles. The tools we use come from the mathematical theory of spin glasses \citep[see][]{talagrand2011mean1,talagrand2011mean2}. These techniques were successfully used in  \citep{alaoui2017finite} to prove similar results in the spiked Wigner model.

Let us further mention that the region of parameters $(\alpha,\beta)$ we are able to cover with our proof method is optimal when (and only when) $P_{\utt}$  and $P_{\vtt}$ are both symmetric Rademacher. In Section~\ref{sxn:discussion}, we formulate a conjecture on the \emph{maximal} region in which the log-LR has asymptotically Gaussian fluctuations. This region is of course below the BBP threshold, but does \emph{not} extend up to it in general.

 \section{Main results}
Throughout this paper, we assume that the priors $P_{\utt}$ and $P_{\vtt}$ have zero mean, unit variance, and supports bounded in radius by $K_{\utt}$ and $K_{\vtt}$ respectively. Let $\P_{\beta}$ be the probability distribution of the matrix $\Y$ as per~\eqref{spiked_model}. Define $L(\cdot;\beta)$ to be the likelihood ratio, or Radon-Nikodym derivative of $\P_{\beta}$ with respect to $\P_{0}$:
 \[L(\cdot;\beta) \equiv \frac{\rmd \P_{\beta}}{\rmd \P_{0}}.\]
For a fixed $\Y \in \R^{N \times M}$, by conditioning on $\u$ and $\v$, we can write
\[L(\Y;\beta) = \int \exp \Big(\sum_{i,j}\sqrt{\frac{\beta}{N}}Y_{ij}u_iv_j - \frac{\beta}{2N} u_i^2v_j^2\Big)\rmd P_{\utt}^{\otimes N}(\u)\rmd P_{\vtt}^{\otimes M}(\v).\]
 Our main contribution is the following asymptotic distributional result.
\begin{theorem}\label{fluctuations_main_theorem}
Let $\alpha,\beta \ge 0$ such that $K_{\utt}^4K_{\vtt}^4\alpha\beta^2<1$. Then in the limit $N \to \infty$ and $M/N \to \alpha$,
\[\log L(\Y;\beta) \rightsquigarrow \normal\left(\pm\frac{1}{4} \log \left(1-\alpha \beta^2\right),-\frac{1}{2} \log \left(1-\alpha \beta^2\right)\right), \]
where ``$\rightsquigarrow$" denotes convergence in distribution. The sign of the mean is $+$ under the null $\Y \sim \P_{0}$ and $-$ under the alternative $\Y \sim \P_{\beta}$.
\end{theorem}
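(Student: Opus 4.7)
The plan is to first compute $\E_0[L]$ and $\E_0[L^2]$ under the null to identify the candidate Gaussian parameters, then upgrade moment convergence to a CLT via a cavity/martingale argument, and finally transfer the result to the alternative via a change-of-measure identity. Under $\P_0$, $\E_0[L]=1$ trivially. For the second moment, introduce two independent replicas $(\u^{(1)},\v^{(1)})$ and $(\u^{(2)},\v^{(2)})$, execute the Gaussian expectation over $\W$, and observe that the quadratic self-energy term in the exponent of $L$ cancels the diagonal of the Gaussian covariance, leaving
\begin{equation*}
\E_0[L^2] \;=\; \E\,\exp\!\Big(\tfrac{\beta}{N}\langle\u^{(1)},\u^{(2)}\rangle\,\langle\v^{(1)},\v^{(2)}\rangle\Big).
\end{equation*}
Independent CLTs give $N^{-1/2}\langle\u^{(1)},\u^{(2)}\rangle\rightsquigarrow Z_1$ and $M^{-1/2}\langle\v^{(1)},\v^{(2)}\rangle\rightsquigarrow Z_2$ with $Z_1,Z_2\sim\normal(0,1)$ independent, so the exponent converges in distribution to $\beta\sqrt{\alpha}\,Z_1Z_2$. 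The hypothesis $K_\utt^4K_\vtt^4\alpha\beta^2<1$ supplies the uniform integrability needed to pass to expectations via Hanson--Wright-type sub-exponential control of the bounded bilinear forms. Hence $\E_0[L^2]\to \E\exp(\beta\sqrt{\alpha}Z_1Z_2)=(1-\alpha\beta^2)^{-1/2}$, forcing any Gaussian limit under $\P_0$ to be $\normal(-\sigma^2/2,\sigma^2)$ with $\sigma^2=-\tfrac{1}{2}\log(1-\alpha\beta^2)$.

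\textbf{CLT via the cavity method.} To convert moment convergence into distributional convergence, I integrate the $v_j$'s out first, writing $L=\int\prod_{j=1}^M\phi_j(\u)\,\rmd P_\utt^{\otimes N}(\u)$ where $\phi_j(\u)$ depends only on $\u$ and the column $\Y_{\cdot j}$. Under $\P_0$ the columns are i.i.d., so $\log L$ is a functional of $M$ independent inputs, and I form the Doob martingale $X_j=\E_0[\log L\mid\mathcal{F}_j]-\E_0[\log L\mid\mathcal{F}_{j-1}]$ with $\mathcal{F}_j=\sigma(\Y_{\cdot 1},\dots,\Y_{\cdot j})$. A cavity expansion of each $X_j$ adds the $j$-th column to the Gibbs measure on $\u$: in the paramagnetic regime, this measure remains close to the product prior, the cavity field $N^{-1/2}\sum_i Y_{ij}\langle u_i\rangle_{j-1}$ is asymptotically Gaussian, and $X_j$ is asymptotically $\normal(0,\sigma^2/M)$ conditional on $\mathcal{F}_{j-1}$. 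A martingale CLT (Brown's theorem), whose Lyapunov condition is automatic from the bounded supports, then yields $\log L-\E_0\log L\rightsquigarrow\normal(0,\sigma^2)$. To pass to the alternative I use the identity $\E_\beta[g(\log L)]=\E_0[L\,g(\log L)]$, which realizes the alternative law as an exponential tilt of the null law; since tilting by $e^x$ shifts a $\normal(\mu,\sigma^2)$ to $\normal(\mu+\sigma^2,\sigma^2)$ and $\mu=-\sigma^2/2$, the tilted mean is $+\sigma^2/2=-\tfrac{1}{4}\log(1-\alpha\beta^2)$, which is exactly the value stated under $\P_\beta$.

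\textbf{Main obstacle.} The cavity step is the heart of the proof. In the spherical case of \cite{onatski2013asymptotic} and \cite{johnstone2015testing}, rotational invariance collapses $L$ to a spectral functional tractable by steepest-descent on a contour integral; by contrast, the product prior here breaks all symmetry and forces a direct analysis of the Gibbs measure on $\u$. The technical crux is to uniformly control higher overlap moments $\E\bigl[\langle\u^{(1)},\u^{(2)}\rangle^k\langle\v^{(1)},\v^{(2)}\rangle^k\bigr]$ along the cavity recursion so that the expansion of each $X_j$ is rigorous and its Gaussian leading term tight. It is precisely this bookkeeping that produces the stronger-than-BBP hypothesis $K_\utt^4K_\vtt^4\alpha\beta^2<1$ and explains why the approach is sharp only in the Rademacher case $K_\utt=K_\vtt=1$; the true subcritical boundary for general priors is the subject of the conjecture discussed in the introduction.
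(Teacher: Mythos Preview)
Your endpoints are sound: the second-moment computation correctly identifies the candidate parameters, and the exponential-tilting step at the end is equivalent to the Le Cam/Portmanteau transfer the paper uses (in the opposite direction). The gap is in the middle, and it is a real one.

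First, the ``cavity field'' you write down, $N^{-1/2}\sum_i Y_{ij}\langle u_i\rangle_{j-1}$, is not the leading contribution to the increment $X_j$. Because $P_\vtt$ is centered, the term linear in $v$ disappears when you form $\phi_j(\u)=\int e^{\sqrt{\beta/N}\,Y_{\cdot j}^\top\u\,v-\frac{\beta}{2N}\|\u\|^2v^2}\rmd P_\vtt(v)$; the first surviving term is quadratic. Even taken at face value, your field has conditional variance $N^{-1}\sum_i\langle u_i\rangle_{j-1}^2=\langle R^\utt_{1,2}\rangle_{j-1}$, which is $o_p(1)$ in the paramagnetic regime, so it cannot be the object driving Gaussian fluctuations of order one. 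The genuine leading term is a centered quadratic form in $Y_{\cdot j}$ with kernel $(\langle u_iu_{i'}\rangle_{j-1})$, whose conditional variance is $\tfrac{\beta^2}{2}\langle (R^\utt_{1,2})^2\rangle_{j-1}$; summing this over $j$ with the Gibbs measure frozen at the prior gives only $\alpha\beta^2/2$, the first term of the series for $-\tfrac{1}{2}\log(1-\alpha\beta^2)$. Getting the full logarithm requires tracking how the overlap moments evolve as columns are added, i.e.\ a self-consistent overlap computation with error control---precisely the content of Proposition~\ref{asymptotic_decoupling}, which you have not supplied. The Lyapunov condition is indeed easy; the convergence of $\sum_j\E[X_j^2\mid\mathcal{F}_{j-1}]$ to a deterministic constant is not.

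Second, you choose to run the argument under $\P_0$. The paper deliberately works under $\P_\beta$ and transfers to $\P_0$ only at the end, because under $\P_\beta$ the Nishimori identity lets one replace replica--replica overlaps by replica--signal overlaps, and it is this substitution that makes the interpolation and replica-coupling bounds go through in the full region $K_\utt^4K_\vtt^4\alpha\beta^2<1$. Under $\P_0$ there is no planted pair to couple to, and obtaining the crude overlap estimate~\eqref{crude_bound} in the same region is not clear.

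For comparison, the paper's route is different from a martingale CLT: it differentiates the characteristic function $\beta\mapsto\E_{\P_\beta}e^{\imnb s\log L}$, reduces the derivative to the single weighted quantity $N\E[\langle R^\utt_{1,2}R^\vtt_{1,2}\rangle e^{\imnb s\log L}]$, and proves via a two-sided cavity (one step in $N$, one in $M$) that this decouples as $\frac{\alpha\beta}{1-\alpha\beta^2}\,\E e^{\imnb s\log L}+O(N^{-1/2})$. Integrating the resulting ODE in $\beta$ produces the Gaussian characteristic function directly, with mean and variance emerging simultaneously; no separate identification via $\E_0 L^2$ or separate centering of $\log L$ is needed.
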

We mention that fluctuations of this sort were first proved by \cite{aizenman1987some} in a seminal paper in the context of the SK model. 
A consequence of either one of the above statements and Le Cam's first lemma~\citep[Lemma 6.4]{vandervaart2000asymptotic} is the mutual contiguity\footnote{Two sequences of probability measures $(P_n)$ and $(Q_n)$ defined on the same (sequence of) measurable space(s) are said to be mutually contiguous if $P_{n}(A_n) \to 0$ is equivalent to $Q_n(A_n) \to 0$ as $n \to \infty$ for every sequence of measurable sets $(A_n)$.}  between the null and the spiked alternative:

%\vspace{-.1cm}
\begin{corollary}
For $K_{\utt}^4K_{\vtt}^4\alpha\beta^2 <1$, the families of distributions  $\P_{0}$ and $\P_{\beta}$ (indexed by $M,N$) are mutually contiguous in the limit $N \to \infty$, $M/N \to \alpha$.
\end{corollary}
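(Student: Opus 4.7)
The plan is a direct application of Le~Cam's first lemma, taking Theorem~\ref{fluctuations_main_theorem} as the sole input. Recall one standard form: if $\rmd Q_n/\rmd P_n \rightsquigarrow W$ under $P_n$ with $\E[W]=1$, then $Q_n \triangleleft P_n$; symmetrically, if $\rmd P_n/\rmd Q_n \rightsquigarrow V$ under $Q_n$ with $\E[V]=1$, then $P_n \triangleleft Q_n$. The whole proof is to verify both mean-one conditions.

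First I would work under $\P_0$. Theorem~\ref{fluctuations_main_theorem} and the continuous mapping theorem give $L(\Y;\beta) \rightsquigarrow e^U$ where $U \sim \normal(\mu,\sigma^2)$ with $\mu = \tfrac{1}{4}\log(1-\alpha\beta^2)$ and $\sigma^2 = -\tfrac{1}{2}\log(1-\alpha\beta^2)$. The hypothesis $K_\utt^4 K_\vtt^4 \alpha\beta^2 < 1$ forces $\alpha\beta^2 < 1$, so both parameters are finite and $\sigma^2 > 0$. The specific mean-variance relation $\mu + \sigma^2/2 = 0$ baked into the statement of Theorem~\ref{fluctuations_main_theorem} yields $\E[e^U] = e^{\mu + \sigma^2/2} = 1$, which is precisely the mass-preservation condition one expects of any weak limit of likelihood ratios. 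Le~Cam's lemma immediately gives $\P_\beta \triangleleft \P_0$.

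For the reverse direction I would invoke the second half of Theorem~\ref{fluctuations_main_theorem}: under $\P_\beta$, $\log L(\Y;\beta) \rightsquigarrow U'$ with $U' \sim \normal(-\mu,\sigma^2)$. Consequently $L(\Y;\beta)^{-1} = \rmd \P_0/\rmd \P_\beta$ converges in distribution under $\P_\beta$ to $e^{-U'}$, and the analogous Gaussian moment computation gives $\E[e^{-U'}] = \exp(\mu + \sigma^2/2) = 1$. A second invocation of Le~Cam's lemma delivers $\P_0 \triangleleft \P_\beta$. There is no substantive obstacle; the proof is essentially bookkeeping once Theorem~\ref{fluctuations_main_theorem} is in hand, and the only identity worth isolating is $\mu + \sigma^2/2 = 0$, which is what makes the Gaussian limit correspond to an honest unit-mean limiting likelihood.
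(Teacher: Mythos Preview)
Your proof is correct and follows the same route the paper indicates: the corollary is stated there as an immediate consequence of Theorem~\ref{fluctuations_main_theorem} and Le~Cam's first lemma, and your write-up simply fills in the bookkeeping (the key identity $\mu+\sigma^2/2=0$). One small remark: the paper notes that \emph{either one} of the two fluctuation statements already suffices for mutual contiguity, since the log-normal limit $e^U$ is strictly positive almost surely; so from the convergence under $\P_0$ alone, Le~Cam's lemma (condition $\E[e^U]=1$) gives $\P_\beta \triangleleft \P_0$, while the condition $\Pr(e^U>0)=1$ gives $\P_0 \triangleleft \P_\beta$. Your use of both halves is of course also fine.
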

Contiguity implies impossibility of strong detection: there exists no test that, upon observing a random matrix $\Y$ with the promise that it is sampled either from $\P_{0}$ or $\P_{\beta}$, can tell which is the case with asymptotic certainty in this regime. 
We also mention that contiguity can be proved through the second-moment method and its conditional variants, as was done by~\cite{montanari2015limitation,perry2016optimality_annals,banks2017information} for closely related models. However, identifying the right event on which to condition in order to tame the second moment of $L$ is a matter of a case-by-case deliberation. Study of the fluctuations of the log-LR appears to provide a more systematic route: the logarithm has a smoothing effect that kills the wild (but rare) events that otherwise dominate in the second moment. This being said, our result is optimal only in one special case:

When $P_{\utt}$ and $P_{\vtt}$ are symmetric Rademacher, $K_{\utt} = K_{\vtt}=1$, and Theorem~\ref{fluctuations_main_theorem} covers the entire $(\alpha,\beta)$ region where such fluctuations hold. Indeed,  for $\alpha\beta^2>1$, one can distinguish $\P_{\beta}$ from $\P_{0}$ by looking at the top eigenvalue of the empirical covariance matrix $\Y\Y^\intercal$~\citep{benaych2012singular}. So the conclusion of Theorem~\ref{fluctuations_main_theorem} cannot hold in light of the above contiguity argument.  
 Beyond this special case, our result is not expected to be optimal. 
 
\paragraph{Limits of weak detection} Since contiguity implies that testing errors are inevitable, it is natural to aim for tests $T:\R^{N\times M} \mapsto \{0,1\}$ that minimize the sum of the Type-I and Type-II errors:
\[\err(T) = \P_{0}\left(T(\Y) = 1\right)+\P_{\beta}\left(T(\Y) = 0\right).\]
By the Neyman-Pearson lemma, the test minimizing the above error is the likelihood ratio test that rejects the null iff $L(\Y;\beta)>1$. The optimal error is thus 
\[\err_{M,N}^*(\beta) = \P_{0}\left(\log L(\Y;\beta) >0\right)+\P_{\beta}\left(\log L(\Y;\beta) \le 0\right) = 1-D_{\TV}(\P_{\beta},\P_{0}).\]
The symmetry of the means under the null and the alternative in Theorem~\ref{fluctuations_main_theorem} implies that the above Type-I and Type-II errors are equal, and that the total error has a limit:  
\begin{corollary}
For $\alpha,\beta \ge 0$ such that  $K_{\utt}^4K_{\vtt}^4\alpha\beta^2 <1$,
\begin{equation*}\label{optimal_error}
\underset{\substack{ N \to \infty \\ M/N \to \alpha}}{\lim} \err_{M,N}^*(\beta)=1-\underset{\substack{ N \to \infty \\ M/N \to \alpha}}{\lim}D_{\TV}(\P_{\beta},\P_{0}) = \erfc\left(\frac{1}{4}\sqrt{- \log \left(1-\alpha \beta^2\right)}\right), 
 \end{equation*}
where $\erfc(x) = \frac{2}{\sqrt{\pi}}\int_x^\infty e^{-t^2}\rmd t$ is the complementary error function. 
\end{corollary}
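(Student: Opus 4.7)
The starting point is the Neyman--Pearson / Scheff\'e identity $\err^*_{M,N}(\beta) = 1 - D_{\TV}(\P_\beta, \P_0)$, which holds for every $N$ and is already stated in the excerpt preceding the corollary. The first equality of the corollary is then automatic, and what remains is to compute the common limit. Abbreviate $t := -\log(1-\alpha\beta^2)$; under the standing assumption $K_{\utt}^4 K_{\vtt}^4 \alpha\beta^2 < 1$ one has $t>0$, and Theorem~\ref{fluctuations_main_theorem} supplies
\[
\log L(\Y;\beta) \;\rightsquigarrow\; Z_0 \sim \normal(-t/4,\, t/2) \quad \text{under } \P_0, \qquad \log L(\Y;\beta) \;\rightsquigarrow\; Z_1 \sim \normal(+t/4,\, t/2) \quad \text{under } \P_\beta.
\]

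Next, the optimal (likelihood-ratio) test rejects the null on the event $\{\log L > 0\}$, so the optimal error decomposes as $\err^*_{M,N}(\beta) = \P_0(\log L > 0) + \P_\beta(\log L \le 0)$. Since the two limit laws have continuous Lebesgue densities on $\R$, the threshold $0$ is a continuity point of each limiting CDF, and Portmanteau's theorem upgrades the distributional convergence to convergence of probabilities:
\[
\P_0(\log L > 0) \to \P(Z_0 > 0), \qquad \P_\beta(\log L \le 0) \to \P(Z_1 \le 0).
\]
The $\pm$ sign on the two means in Theorem~\ref{fluctuations_main_theorem} is exactly what makes these two tail probabilities agree: after standardization one gets $\P(Z_0 > 0) = \P(Z_1 \le 0) = \P\!\left(\normal(0,1) > (t/4)/\sqrt{t/2}\right) = \P\!\left(\normal(0,1) > \sqrt{t/8}\right)$, so the limiting Type-I and Type-II errors are equal (the ``balanced error'' phenomenon encoded in the $\pm$).

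Adding the two equal contributions and using the elementary identity $2\P(\normal(0,1) > x) = \erfc(x/\sqrt{2})$ with $x = \sqrt{t/8}$ yields the claimed limit $\erfc(\sqrt{t}/4) = \erfc\!\left(\tfrac{1}{4}\sqrt{-\log(1-\alpha\beta^2)}\right)$. There is essentially no obstacle once Theorem~\ref{fluctuations_main_theorem} is granted: the only points worth guarding are (i) the invocation of Portmanteau at $\{\log L = 0\}$, which is harmless because the Gaussian limits assign zero mass to single points, and (ii) the small arithmetic check $\sqrt{t/8}/\sqrt{2} = \sqrt{t}/4$ reconciling the standard Gaussian tail with the $\erfc$ normalization.
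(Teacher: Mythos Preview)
Your proof is correct and follows exactly the approach the paper sketches in the sentence preceding the corollary: use the $\pm$ symmetry of the limiting Gaussian means from Theorem~\ref{fluctuations_main_theorem} to conclude that the limiting Type-I and Type-II errors coincide, then compute the resulting Gaussian tail. The only nitpick is the edge case $\alpha\beta^2=0$, where $t=0$ and the limit is a point mass at zero (so $0$ is not a continuity point); there the identity $\err^*=\erfc(0)=1$ holds trivially since $\P_\beta=\P_0$.
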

Furthermore, our proof of Theorem~\ref{fluctuations_main_theorem} allows us obtain the convergence of the mean (actually, all moments of $\log L$) under $\P_{\beta}$, which corresponds to the Kullback-Liebler divergence of $\P_{\beta}$ to $\P_{0}$:
\begin{proposition}\label{kl_convergence}
For all $\alpha,\beta \ge 0$ such that $K_{\utt}^4K_{\vtt}^4\alpha\beta^2<1$,
\[\underset{\substack{ N \to \infty \\ M/N \to \alpha}}{\lim}D_{\KL}(\P_{\beta},\P_{0}) = -\frac{1}{4} \log \left(1-\alpha \beta^2\right).\]   
\end{proposition}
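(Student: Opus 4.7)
Since $D_{\KL}(\P_\beta,\P_0)=\E_\beta[\log L(\Y;\beta)]$, the content of the proposition is the convergence of the first moment of $\log L$ under the planted law. Theorem~\ref{fluctuations_main_theorem} gives the weak limit $\log L\rightsquigarrow\normal(m,2m)$ under $\P_\beta$, with $m:=-\tfrac14\log(1-\alpha\beta^2)$. Convergence in distribution alone does not yield $\E_\beta[\log L]\to m$; the missing ingredient is uniform integrability of $\log L$ under $\P_\beta$, equivalently a bound on a moment of order strictly greater than one. My plan is to establish Theorem~\ref{fluctuations_main_theorem} via the method of moments, which automatically supplies this bound by controlling every polynomial moment of $\log L$ under $\P_\beta$.

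Concretely, I would set up a cavity-type recursion in the style of Talagrand's treatment of the SK model: peel off one coordinate of the spike at a time, write the resulting increment of $\log L$ as the logarithm of a Gibbs average against the associated cavity field, and Taylor-expand in powers of the overlap. Under $\P_\beta$, the Nishimori identity identifies the law of an independent posterior sample with that of the planted spike, so every term in the expansion reduces to a pure-prior expectation over finitely many independent replicas. High-temperature estimates of Talagrand type (compare the Wigner version in~\cite{alaoui2017finite}) then contract these terms geometrically inside the subcritical window $K_\utt^4K_\vtt^4\alpha\beta^2<1$. Summing the cavity expansion gives
\[
\E_\beta\bigl[(\log L)^k\bigr] \;\longrightarrow\; \E[Z^k], \qquad Z\sim\normal(m,2m),\qquad k\ge 1.
\]
The case $k=1$ is the proposition; the cases $k\ge 2$ provide the uniform integrability of $\log L$ noted above, and at the same time upgrade Theorem~\ref{fluctuations_main_theorem} from weak convergence to convergence of all moments.

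The main obstacle is the first-moment analysis itself. Higher centered moments are driven purely by \emph{fluctuations} of overlap polynomials---combinatorially by pairings of replica indices---and their limits fall out of standard Gaussian contraction identities. The quantity $\E_\beta[\log L]$, by contrast, accumulates a small ``drift'' at every cavity step coming from the planted signal, and one must verify that these drifts telescope to exactly $-\tfrac14\log(1-\alpha\beta^2)$ rather than to a different constant or to infinity. The boundedness of the priors $P_\utt,P_\vtt$ together with the strict inequality $K_\utt^4K_\vtt^4\alpha\beta^2<1$ are precisely what keeps each cavity increment small and makes the underlying series---which after resummation is $\tfrac14\sum_{k\ge 1}(\alpha\beta^2)^k/k$---converge absolutely to the claimed value.
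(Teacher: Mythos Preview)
Your proposal takes a substantially different route from the paper. The paper's proof is essentially two lines once Proposition~\ref{asymptotic_decoupling} is available: differentiate $\E_{\P_\beta}[\log L(\Y;\beta)]$ with respect to the signal strength $\beta$, obtain via Gaussian integration by parts and the Nishimori identity
\[
\frac{\rmd}{\rmd\beta}\,\E_{\P_\beta}[\log L] \;=\; -\frac{N}{2}\,\E\langle R^{\utt}_{1,2}R^{\vtt}_{1,2}\rangle + N\,\E\langle R^{\utt}_{1,*}R^{\vtt}_{1,*}\rangle \;=\; \frac{N}{2}\,\E\langle R^{\utt}_{1,2}R^{\vtt}_{1,2}\rangle,
\]
and then invoke Proposition~\ref{asymptotic_decoupling} at $s=0$, which says $N\E\langle R^{\utt}_{1,2}R^{\vtt}_{1,2}\rangle = \frac{\alpha\beta}{1-\alpha\beta^2}+O(N^{-1/2})$. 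Integrating in $\beta$ from $0$ yields $-\frac{1}{4}\log(1-\alpha\beta^2)$ directly. So the paper never passes through weak convergence plus uniform integrability; it simply reuses, at no extra cost, the overlap asymptotics that were already proved to drive the characteristic-function argument (Theorem~\ref{convergence_of_characteristic_function}). This is exactly the ``$s=0$'' specialization of Lemma~\ref{derivative_phi_planted_lemma}.

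Your plan---re-derive Theorem~\ref{fluctuations_main_theorem} by the method of moments via a spatial cavity telescope $\log L_N = \sum_k(\log L_k-\log L_{k-1})$---is not wrong in spirit and would indeed deliver all polynomial moments, but it is far more than the proposition requires. In the bipartite setting you would also have to commit to a peeling schedule (only $\utt$-coordinates, only $\vtt$-coordinates, or an interleaving at ratio $\alpha$), since the per-step drift depends on that choice; this is a genuine extra bookkeeping burden that the paper's $\beta$-interpolation avoids entirely, because a single scalar parameter moves the whole system and the derivative collapses to one overlap moment rather than a triangular array of cavity increments. What your approach would buy is a self-contained Aizenman--Lebowitz--Ruelle style moment proof of the CLT; what the paper's approach buys is that Proposition~\ref{kl_convergence} falls out as an immediate by-product of machinery already in place.
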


\section{Replicas, overlaps, Gibbs measures and Nishimori}
A crucial component of the proof involves understanding the convergence properties of certain overlaps between ``replicas." To embark on the argument let us introduce some important notation and terminology. Let $H:\R^{N+M}\to \R$ be the (random) function, which we refer to as a \emph{Hamiltonian}, defined as
\begin{equation} \label{hamiltonian_general}
-H(\u,\v) = \sum_{i,j}\sqrt{\frac{\beta}{N}}Y_{ij}u_iv_j - \frac{\beta}{2N} u_i^2v_j^2,
\end{equation}
where $\Y = (Y_{ij})$ comes from $\P_{\beta}$ or $\P_{0}$. Letting $\rho$ denote the product measure $P_{\utt}^{\otimes N} \otimes P_{\vtt}^{\otimes M}$, we have 
\[L(\Y;\beta) = \int e^{-H(\u,\v)} \rmd \rho(\u,\v).\] 
Let us define the Gibbs average of a function $f: (\R^{N+M})^n\mapsto \R$ of $n$ {replica} pairs $(\u^{(l)},\v^{(l)})_{l=1}^n$ with respect to the Hamiltonian $H$ as
\begin{equation}\label{gibbs_average}
\left \langle f\right\rangle = \frac{\int f \prod_{l=1}^n e^{-H(\u^{(l)},\v^{(l)})}\rmd \rho(\u^{(l)},\v^{(l)})}{\left(\int e^{-H(\u,\v)}\rmd \rho(\u,\v)\right)^n}.
\end{equation} 
This is the mean of $f$ with respect to the posterior distribution of $(\u,\v)$ given $\Y$: $\P_{\beta}(\cdot|\Y)^{\otimes n}$. We interpret the replicas as random and independent draws from this posterior.    
When $\Y \sim \P_{\beta}$ we also allow $f$ to depend on the spike pair $(\u^*,\v^*)$. For two different replicas $(\u^{(l)},\v^{(l)})$ and $(\u^{(l')},\v^{(l')})$ ($l'$ is allowed to take the value $*$) we denote the overlaps of the $\utt$ and $\vtt$ parts, both normalized by $N$, as
\[R^{\utt}_{l,l'} = \frac{1}{N}\sum_{i=1}^N u_i^{(l)} u_i^{(l')} ~~~\mbox{and}~~~ R^{\vtt}_{l,l'} = \frac{1}{N}\sum_{j=1}^M v_j^{(l)} v_j^{(l')}.\]

\subsection{The Nishimori property under $\P_{\beta}$} 
Let's perform the following experiment:
\begin{enumerate}
\item Construct $\u^*\in \R^N$ and $\v^*\in \R^M$ by independently drawing their coordinates from $P_{\utt}$ and $P_{\vtt}$ respectively.
\item Construct $\Y = \sqrt{\frac{\beta}{N}}\u^*\v^{*\intercal} + \W$, where $W_{ij} \sim \normal(0,1)$ are all independent. ($\Y$ is distributed according to $\P_{\beta}$.)
\item Draw $n+1$ independent random vector pairs, $(\u^{(l)},\v^{(l)})_{l=1}^{n+1}$, from $\P_{\beta}((\u,\v) \in \cdot |\Y)$.
\end{enumerate}
By the tower property of expectations, the following equality of joint laws holds  
\begin{equation}\label{nishimori_property}
\left(\Y,(\u^{(1)},\v^{(1)}),\cdots,(\u^{(n+1)},\v^{(n+1)})\right) \overset{\textup{d}}{=}  \left(\Y,(\u^{(1)},\v^{(1)}),\cdots,(\u^{(n)},\v^{(n)}),(\u^{*},\v^{*})\right).
\end{equation}
\citep[See Proposition 15 in][]{lelarge2016fundamental_colt}. This in particular implies that under the alternative $\P_{\beta}$, the overlaps $(R^{\utt}_{1,*},R^{\vtt}_{1,*})$ between replica and spike pairs have the same distribution as the overlaps $(R^{\utt}_{1,2},R^{\vtt}_{1,2})$ between two replica pairs. This is a very important property of the planted (spiked) model, which is usually named after~\cite{nishimori2001statistical} (see Chapter 4). It allows for manipulations that are not possible under the null. For instance, to prove the convergence of the overlap between two replicas, $\E\langle (R^{\utt}_{1,2})^2\rangle  \to 0$, it suffices to prove $\E\langle (R^{\utt}_{1,*})^2\rangle  \to 0$ since the two quantities are equal. The latter turns out to be a much easier task. 

\subsection{Overlap decay implies super-concentration}
Let us now explain how the behavior of the overlaps is related to the fluctuations of $\log L$. For concreteness we consider the null model as an example. Let $\Y\sim \P_{0}$, i.e., $Y_{ij}\sim \normal(0,1)$ all independent. The log-likelihood ratio, seen as a function of $\Y$, is a differentiable function, and
\[\frac{\rmd }{\rmd Y_{ij}} \log L(\Y;\beta) = \sqrt{\frac{\beta}{N}}\langle u_iv_j\rangle.\] 
By the Gaussian Poincar\'e inequality, we can bound the variance by the norm of the gradient as
\[\E\left[(\log L - \E \log L)^2\right] \le \E\left[\twonorm{\nabla \log L}^2\right] = \beta N \E\langle R^{\utt}_{1,2}R^{\vtt}_{1,2}\rangle.\]     
The last equality follows from the fact $\langle u_iv_j\rangle^2 = \langle u_i^{(1)}v_j^{(1)}u_i^{(2)}v_j^{(2)}\rangle$. 
Since our priors have bounded support, we can already bound $R^{\utt}_{1,2}R^{\vtt}_{1,2}$ by $\frac{M}{N}K_{\utt}^2K_{\vtt}^2$, and we deduce that the variance is $\bigo(N)$. In fact, by the Maurey-Pisier inequality~\citep[Theorem 2.2]{Pisier1986}, we can control the moment generating function of $\log L$ by that of $N\langle R^{\utt}_{1,2}R^{\vtt}_{1,2}\rangle$. This implies sub-Gaussian concentration of the former. Observe now that if the quantity $\E\langle R^{\utt}_{1,2}R^{\vtt}_{1,2}\rangle$ decays, then the much stronger result $\var (\log L) = \smallo(N)$ holds. This behavior of unusually small variance is often referred to as ``super-concentration." See~\cite{chatterjee2014superconcentration} for more on this topic. In our case, not only does $\E\langle R^{\utt}_{1,2}R^{\vtt}_{1,2}\rangle$ decay when $\alpha$ and $\beta$ are sufficiently small, but it does so at a rate of $1/N$ so that $N\E\langle R^{\utt}_{1,2}R^{\vtt}_{1,2}\rangle$ converges to a finite limit, and $\var (\log L)$ is constant. This is a first reason why Theorem~\ref{fluctuations_main_theorem} should be expected: if anything, the fluctuations must be of constant order.    

\section{Proof of Theorem~\ref{fluctuations_main_theorem}}
It suffices to prove the fluctuations under one of the hypotheses. Fluctuations under the remaining one comes for free as a consequence of Le Cam's third lemma \citep[or more specifically, the Portmanteau theorem][Theorem 6.6]{vandervaart2000asymptotic}. For the reader's convenience, we present this argument in Appendix~\ref{sxn:fluctuation_equivalence}. We choose to treat the planted case $\Y \sim \P_{\beta}$. The reason is that we are able to achieve control on the overlaps and show their concentration under the alternative in a wider region of parameters $(\alpha,\beta)$ than under the null. This is ultimately due to the Nishimori property~\eqref{nishimori_property}. 
  
We will show the convergence of  the characteristic function of $\log L$  to that of a Gaussian. 
Let $\mu = -\frac{1}{4}\log(1-\alpha \beta^2)$, $\sigma^2=-\frac{1}{2}\log(1-\alpha \beta^2)$,
and let $\phi$ be the characteristic function of the Gaussian distribution $\normal(\mu,\sigma^2)$: for $s \in \R$ and $\imnb^2 = -1$, let $\phi(s) = \exp\{\imnb s \mu -\frac{\sigma^2}{2}s^2\}$.
The following is a more quantitative convergence result that implies Theroem~\ref{fluctuations_main_theorem}. 
\begin{theorem}\label{convergence_of_characteristic_function}
Let $s \in \R$ and $\alpha, \beta \ge 0$. There exists $K=K(s,\alpha,\beta,K_{\utt},K_{\vtt})<\infty$ such that for $M,N$ sufficiently large and $M = \alpha N + \bigo(\sqrt{N})$, the following holds. If $\alpha\beta^2K_{\utt}^4K_{\vtt}^4<1$, then
\[\abs{\E_{\P_{\beta}}\left[e^{\imnb s \log L(\Y;\beta)}\right] - \phi(s)} \le \frac{K}{\sqrt{N}}.\]
\end{theorem}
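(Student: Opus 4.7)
The plan is to derive a first-order ODE in $\beta$ for $F(\beta) := \E_{\P_{\beta}}[\exp(\imnb s \log L(\Y;\beta))]$ and match it, to leading order in $N$, with the ODE satisfied by the Gaussian target $\phi(s;\beta) = \exp\bigl(\mu(\beta)(\imnb s - s^2)\bigr)$, where $\mu(\beta):=-\tfrac14\log(1-\alpha\beta^2)$ and I used that the target variance is $\sigma^2(\beta) = 2\mu(\beta)$. A direct calculation gives $\tfrac{d\phi}{d\beta} = \phi\cdot\mu'(\beta)(\imnb s - s^2)$ with $\mu'(\beta) = \tfrac{\alpha\beta}{2(1-\alpha\beta^2)}$, and $\phi(s;0)=F(0)=1$. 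The goal is to show that
\[\frac{dF}{d\beta} \;=\; F(\beta)\cdot\mu'(\beta)(\imnb s - s^2) \;+\; O(N^{-1/2}),\]
uniformly on closed subintervals of $\{\beta\ge 0:\alpha\beta^2 K_\utt^4 K_\vtt^4<1\}$; a Gronwall-type argument applied to $F/\phi$ then yields $|F(\beta)-\phi(s;\beta)|\le K/\sqrt{N}$.

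The first step is to compute $\tfrac{dF}{d\beta}$ explicitly. Parametrizing the observation as $\Y^{(\beta)} = \sqrt{\beta/N}\,\u^*\v^{*\intercal}+\W$, a chain-rule calculation yields
\[\frac{d\log L(\Y^{(\beta)};\beta)}{d\beta} \;=\; N\,\langle R^\utt_{*,1}R^\vtt_{*,1}\rangle \;+\; \frac{1}{2\sqrt{\beta N}}\sum_{i,j} W_{ij}\langle u_iv_j\rangle \;-\; \frac{M}{2}\langle S_\utt S_\vtt\rangle,\]
where $S_\utt = \tfrac{1}{N}\sum_i u_i^2$ and $S_\vtt = \tfrac{1}{M}\sum_j v_j^2$. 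Multiplying by $\imnb s\,e^{\imnb s\log L}$, taking expectation, and applying Gaussian integration by parts in each $W_{ij}$, the two $\langle S_\utt S_\vtt\rangle$ contributions cancel exactly. Using the Nishimori identity~\eqref{nishimori_property} to equate $\E[e^{\imnb s\log L}\langle R^\utt_{*,1}R^\vtt_{*,1}\rangle] = \E[e^{\imnb s\log L}\langle R^\utt_{1,2}R^\vtt_{1,2}\rangle] =: B(\beta)$, and collecting the coefficients of $\imnb s$ and $(\imnb s)^2$, one obtains the clean identity
\[\frac{dF}{d\beta} \;=\; \frac{N(\imnb s-s^2)}{2}\,B(\beta).\]

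It remains to show that $B(\beta) = F(\beta)\cdot 2\mu'(\beta)/N + O(N^{-3/2})$. This decomposes into two quantitative estimates: (i) $N\,\E\langle R^\utt_{1,2}R^\vtt_{1,2}\rangle = 2\mu'(\beta)+O(N^{-1/2})$, which pins down the normalized overlap at its deterministic limit (consistent with $\tfrac{d}{d\beta}\E_{\P_\beta}\log L = \mu'(\beta)$ from Proposition~\ref{kl_convergence}); and (ii) $\var(\langle R^\utt_{1,2}R^\vtt_{1,2}\rangle) = O(N^{-2})$, so that by Cauchy-Schwarz the covariance $|B(\beta)-F(\beta)\cdot\E\langle R^\utt_{1,2}R^\vtt_{1,2}\rangle|$ is $O(N^{-3/2})$ since $|e^{\imnb s\log L}|=1$. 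Both estimates come from a cavity argument: enlarging the system by one column (or one row) and writing $\log L_{N,M+1}-\log L_{N,M}$ as a cavity field that, conditionally on the smaller problem, is approximately Gaussian by Nishimori and Gaussian concentration. Iterating and linearizing around the zero-overlap fixed point produces a contraction whose spectral radius is of order $\alpha\beta^2\,\E[u^{*4}]\E[v^{*4}]\le \alpha\beta^2 K_\utt^4 K_\vtt^4$, which is precisely the hypothesis of the theorem.

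The main technical obstacle is establishing (i)-(ii) uniformly on the full region $\alpha\beta^2 K_\utt^4 K_\vtt^4<1$ and with the sharp $N^{-1/2}$ rate on the mean. Unlike spherical priors, rotational invariance is unavailable for product priors, so the cavity recursion genuinely couples the $\u$- and $\v$-overlaps through prior fourth moments, and the boundedness of the supports is used to control higher replica interactions that appear as remainders. I expect this argument to parallel closely the Wigner-case treatment in~\cite{alaoui2017finite}, carried out with two cavity steps (one per factor) and combined with a Maurey--Pisier/Gaussian-Poincar\'e bound to upgrade overlap decay into super-concentration of $\log L$, as indicated in the preceding section. Once (i)-(ii) are in hand, substituting into the above expression for $\tfrac{dF}{d\beta}$ and integrating from $\beta=0$ completes the proof.
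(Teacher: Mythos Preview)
Your overall architecture matches the paper exactly: compute $\tfrac{d}{d\beta}\phi_N(\beta)$, obtain the identity $\tfrac{d}{d\beta}\phi_N(\beta)=\tfrac{\imnb s-s^2}{2}\,N\E[\langle R^{\utt}_{1,2}R^{\vtt}_{1,2}\rangle e^{\imnb s\log L}]$ via Gaussian integration by parts and Nishimori (this is the paper's Lemma~\ref{derivative_phi_planted_lemma}), then show the right-hand side is $\tfrac{\alpha\beta}{1-\alpha\beta^2}\phi_N(\beta)+O(N^{-1/2})$ and integrate.

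The gap is in your decorrelation step. You propose to split
\[
N\,\E\!\left[\langle R^{\utt}_{1,2}R^{\vtt}_{1,2}\rangle\,e^{\imnb s\log L}\right]
= N\,\E\langle R^{\utt}_{1,2}R^{\vtt}_{1,2}\rangle\cdot\phi_N(\beta) + N\cdot\mathrm{Cov},
\]
and bound the covariance by Cauchy--Schwarz. But your arithmetic is off: with $\var(e^{\imnb s\log L})=O(1)$ (it cannot be small, since $\log L$ is asymptotically nondegenerate Gaussian) and $\var(\langle R^{\utt}_{1,2}R^{\vtt}_{1,2}\rangle)=O(N^{-2})$, Cauchy--Schwarz gives $|\mathrm{Cov}|=O(N^{-1})$, not $O(N^{-3/2})$. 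Multiplying by $N$ leaves an $O(1)$ error in the ODE, and integrating from $0$ to $\beta$ yields only $|\phi_N-\phi|=O(1)$, which is vacuous. To salvage this route you would need $\var(\langle R^{\utt}_{1,2}R^{\vtt}_{1,2}\rangle)=O(N^{-3})$, which is a sixth-moment–type estimate not supplied by the fourth-moment bounds you invoke.

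The paper avoids this loss by \emph{not} separating the exponential from the overlap. Its Proposition~\ref{asymptotic_decoupling} runs the cavity argument (one step on $u_N$, one on $v_M$) directly on the full quantity $N\,\E[\langle R^{\utt}_{1,2}R^{\vtt}_{1,2}\rangle\,e^{\imnb s\log L}]$, carrying the unit-modulus factor $e^{\imnb s\log L}$ along. The cavity equations then close as a self-consistency for this joint expectation, with error terms bounded by third moments of overlaps (hence $O(N^{-1/2})$ via the fourth-moment bound of Proposition~\ref{fourth_moment_planted}). This is precisely the step where your splitting throws away the correlation structure that the paper's method retains.
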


\begin{remark} 
The condition $M = \alpha N + \bigo(\sqrt{N})$ is assumed only for convenience in order to obtain the rate $1/\sqrt{N}$ in the convergence of the characteristic function. A close inspection of the proof reveals that it can be relaxed to $M/N \to \alpha$ modulo a loss of the convergence rate.      
\end{remark}
 Our approach is to show that the function 
 \[\phi_N(\beta) = \E_{\P_{\beta}}\left[e^{\imnb s \log L(\Y;\beta)}\right]\]
(for $s \in \R$ fixed) is an approximate solution to a differential equation whose solution is the characteristic function of the Gaussian.

\begin{lemma}\label{derivative_phi_planted_lemma} For all $\beta \ge 0$, it holds that
\begin{equation}\label{derivative_phi_planted}
\frac{\rmd}{\rmd \beta} \phi_N(\beta)  = \frac{\imnb s-s^2}{2} N\E\left[\left \langle R^{\utt}_{1,2}R^{\vtt}_{1,2}\right \rangle e^{\imnb s \log L}\right].
\end{equation}
\end{lemma}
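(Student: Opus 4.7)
The approach is a change of measure followed by Gaussian integration by parts, with the Nishimori identity collapsing two overlap terms into one at the end. Since $\rmd\P_{\beta}/\rmd\P_{0} = L(\Y;\beta)$, I first rewrite
\[
\phi_N(\beta) \;=\; \E_{\P_{0}}\!\bigl[L(\Y;\beta)\, e^{\imnb s \log L}\bigr] \;=\; \E_{\P_{0}}\!\bigl[e^{(1+\imnb s)\log L}\bigr].
\]
Under $\P_{0}$ the law of $\Y$ does not depend on $\beta$, so differentiating under the integral (routine, since the priors are compactly supported) and converting back to $\P_{\beta}$ yields
\[
\phi_N'(\beta) \;=\; (1+\imnb s)\, \E_{\P_{\beta}}\!\bigl[\partial_\beta \log L \cdot e^{\imnb s \log L}\bigr],
\]
so it suffices to compute the expectation on the right.

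Direct differentiation of the Gibbs representation of $L$ gives
\[
\partial_\beta \log L \;=\; \Bigl\langle \tfrac{1}{2\sqrt{\beta N}}\sum_{i,j} Y_{ij}\, u_i v_j \;-\; \tfrac{1}{2N}\sum_{i,j} u_i^2 v_j^2\Bigr\rangle.
\]
Under $\P_{\beta}$, I substitute $Y_{ij} = \sqrt{\beta/N}\, u_i^* v_j^* + W_{ij}$ with $W_{ij}\sim\normal(0,1)$ i.i.d. The planted part of the first sum assembles into $\tfrac{N}{2}\langle R^{\utt}_{1,*} R^{\vtt}_{1,*}\rangle$. The noise remainder $\tfrac{1}{2\sqrt{\beta N}}\sum_{i,j} W_{ij}\langle u_i v_j\rangle$ I handle by Gaussian integration by parts applied to $F = \langle u_i v_j\rangle e^{\imnb s \log L}$, using the formulas $\partial_{Y_{ij}}\log L = \sqrt{\beta/N}\langle u_i v_j\rangle$ and $\partial_{Y_{ij}}\langle u_i v_j\rangle = \sqrt{\beta/N}\bigl(\langle u_i^2 v_j^2\rangle - \langle u_i v_j\rangle^2\bigr)$ already exploited in the paper's super-concentration discussion. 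Summing over $(i,j)$ and invoking the replica identity $\sum_{i,j}\langle u_i v_j\rangle^2 = N^2 \langle R^{\utt}_{1,2} R^{\vtt}_{1,2}\rangle$, the noise contribution becomes
\[
\tfrac{1}{2N}\,\E\!\bigl[\langle\textstyle\sum_{i,j} u_i^2 v_j^2\rangle\, e^{\imnb s \log L}\bigr] \;+\; \tfrac{N(\imnb s - 1)}{2}\, \E\!\bigl[\langle R^{\utt}_{1,2} R^{\vtt}_{1,2}\rangle\, e^{\imnb s \log L}\bigr].
\]

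The decisive observation is that the first term above cancels exactly the $-\tfrac{1}{2N}\sum_{i,j} u_i^2 v_j^2$ term already sitting inside $\partial_\beta \log L$. Thus
\[
\E\!\bigl[\partial_\beta \log L \cdot e^{\imnb s \log L}\bigr] \;=\; \tfrac{N}{2}\,\E\!\bigl[\langle R^{\utt}_{1,*} R^{\vtt}_{1,*}\rangle e^{\imnb s \log L}\bigr] \;+\; \tfrac{N(\imnb s - 1)}{2}\,\E\!\bigl[\langle R^{\utt}_{1,2} R^{\vtt}_{1,2}\rangle e^{\imnb s \log L}\bigr].
\]
The Nishimori identity~\eqref{nishimori_property} implies $\E[\langle R^{\utt}_{1,*} R^{\vtt}_{1,*}\rangle G(\Y)] = \E[\langle R^{\utt}_{1,2} R^{\vtt}_{1,2}\rangle G(\Y)]$ for every measurable $G$, merging the two terms into $\tfrac{N \imnb s}{2}\,\E[\langle R^{\utt}_{1,2} R^{\vtt}_{1,2}\rangle e^{\imnb s \log L}]$. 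Multiplying by the outer prefactor $(1+\imnb s)$ and using $(1+\imnb s)(\imnb s) = \imnb s - s^2$ delivers \eqref{derivative_phi_planted}. The main piece of nontrivial bookkeeping is the cancellation of the diagonal $\langle \sum u_i^2 v_j^2\rangle$ term; the rest is routine symbolic manipulation, with Nishimori being what makes the final answer depend only on the two-replica overlap.
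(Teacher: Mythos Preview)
Your proof is correct and essentially follows the paper's argument: both compute $\phi_N'(\beta)$ by expressing the expectation over a $\beta$-independent measure, differentiate under the integral, apply Gaussian integration by parts to the noise term (producing the diagonal cancellation and the two-replica overlap), and finally invoke Nishimori to merge the planted and replica overlap terms. The only cosmetic difference is that you first pass to $\P_0$ via the change of measure $\phi_N(\beta)=\E_{\P_0}[e^{(1+\imnb s)\log L}]$, which yields an outer prefactor $(1+\imnb s)$ together with a planted coefficient $\tfrac12$ (since $\Y$ is held fixed when differentiating $\log L$), whereas the paper works directly in the planted representation $\Y=\sqrt{\beta/N}\,\u^*\v^{*\intercal}+\W$ and obtains an outer $\imnb s$ with planted coefficient $1$; the two bookkeepings of course agree after multiplying out.
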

\begin{proof}
Since $\Y \sim \P_{\beta}$, we can rewrite the Hamiltonian~\eqref{hamiltonian_general} as
\begin{align*}
-H(\u,\v) &= \sum_{i,j}\sqrt{\frac{\beta}{N}}Y_{ij}u_iv_j - \frac{\beta}{2N} u_i^2v_j^2,\\
&=\sum_{i,j}\sqrt{\frac{\beta}{N}}W_{ij}u_iv_j +\frac{\beta}{N}u_iv_ju_i^*v_j^*- \frac{\beta}{2N} u_i^2v_j^ 2.
\end{align*}
We take a derivative with respect to $\beta$:
\begin{align*}
\frac{\rmd}{\rmd \beta} \phi_N(\beta) &= \imnb s \E\left[\left \langle -\frac{\rmd H}{\rmd \beta}\right \rangle  e^{\imnb s \log L}\right]\\
&=\imnb s \sum_{i,j}\left(\frac{1}{2\sqrt{\beta N}} \E\left[W_{ij}\left \langle u_iv_j \right \rangle  e^{\imnb s \log L}\right] 
-\frac{1}{2 N} \E\left[\left \langle u_i^2v_j^2 \right \rangle  e^{\imnb s \log L}\right]  \right)\\
&~~~+\imnb s \frac{1}{N} \sum_{i,j} \E\left[\left \langle u_iv_ju_i^*v_j^ *\right \rangle  e^{\imnb s \log L}\right].
\end{align*}
The last term is equal to $\imnb s N  \E[ \langle R^{\utt}_{1,*}R^{\vtt}_{1,*} \rangle  e^{\imnb s \log L}]$. As for the first term, since $W_{ij}\overset{\text{ind.}}{\sim} \normal(0,1)$, we use Gaussian integration by parts to obtain 
\begin{align*}
\E\left[W_{ij}\left \langle u_iv_j \right \rangle  e^{\imnb s \log L}\right] &= \E\left[\frac{\rmd}{\rmd W_{ij}}\Big(\left \langle u_iv_j \right \rangle  e^{\imnb s \log L}\Big)\right]\\
&= \sqrt{\frac{\beta}{N}}\left(\E\left[\left \langle u_i^2v_j^2 \right \rangle  e^{\imnb s \log L}\right]
- \E\left[\left \langle u_iv_j \right \rangle^2  e^{\imnb s \log L}\right]+\imnb s \E\left[\left \langle u_iv_j \right \rangle^2  e^{\imnb s \log L}\right]\right).
\end{align*}
Regrouping terms, we get
\begin{align}\label{derivative_phi_prime}
\frac{\rmd}{\rmd \beta} \phi_N(\beta)  &= -\imnb s\frac{N}{2} \E\left[\left \langle R^{\utt}_{1,2}R^{\vtt}_{1,2}\right \rangle e^{\imnb s \log L}\right]
+\imnb s N \E\left[\left \langle R^{\utt}_{1,*}R^{\vtt}_{1,*}\right \rangle e^{\imnb s \log L}\right]\\
&~~~+ (\imnb s)^2 \frac{N}{2} \E\left[\left \langle R^{\utt}_{1,2}R^{\vtt}_{1,2}\right \rangle e^{\imnb s \log L}\right].\nonumber
\end{align}
The first and third terms in~\eqref{derivative_phi_prime} contain overlaps between two replicas while the middle term contains an overlap between one replica and the spike vectors. By the Nishimori property~\eqref{nishimori_property}, we can replace the spike by a second replica in the overlaps appearing in the middle term, and this finishes the proof.
\end{proof} 
\paragraph{A heuristic argument} Let us now heuristically consider what should happen. A rigorous argument will be presented shortly.
If the quantity $N \langle R^{\utt}_{1,2}R^{\vtt}_{1,2} \rangle$ concentrates very strongly about some deterministic value $\theta = \theta(\alpha,\beta)$, we would expect that the Gibbs averages in~\eqref{derivative_phi_planted} would behave approximately independently from $\log L$, and we would obtain the following differential equation
\[\frac{\rmd}{\rmd \beta} \phi_N(\beta) \simeq \half\left(\imnb s - s^2\right)\theta\phi_N(\beta).\]
Since $\phi_N(0)=1$, one obtains  $\phi_N(\beta) \simeq \exp\{\half(\imnb s - s^2) \int_0^\beta \theta \rmd\beta'\}$ by integrating over $\beta$, and the result would follow. The concentration assumption we used is commonly referred to as \emph{replica-symmetry} or \emph{the replica-symmetric ansatz} in the statistical physics literature. Most of the difficulty of the proof lies in showing rigorously that replica symmetry indeed holds. 

\paragraph{Sign symmetry between $\P_{\beta}$ and $\P_{0}$} One can execute the same argument under the null model. Since there is no planted term in the Hamiltonian, the analogue of~\eqref{derivative_phi_prime} one obtains does not contain the middle term. Hence the differential equation one obtains is
\[\frac{\rmd}{\rmd \beta} \phi_N(\beta) \simeq \half\left(-\imnb s - s^2\right)\theta\phi_N(\beta).\]  
This is one way to interpret the sign symmetry of the means of the limiting Gaussians under the null and the alternative: the interaction of one replica with the planted spike under the planted model accounts for twice the contribution of the interaction between two independent replicas, and this flips the sign of the mean. 

We now replace the above heuristic with a rigorous statement. Recall that $\Y \sim \P_{\beta}$.
\begin{proposition}\label{asymptotic_decoupling}
For $s \in \R$ and $\alpha,\beta \ge 0$ such that $\alpha\beta^2K_{\utt}^4K_{\vtt}^4<1$, there exist a constant $K = K(s,\alpha,\beta,K_{\utt},K_{\vtt}) <\infty$ such that 
\[N\E\left[\left\langle R^{\utt}_{1,2}R^{\vtt}_{1,2}\right\rangle e^{\imnb s \log L}\right] = \frac{\alpha \beta}{1-\alpha\beta^2}\E\left[e^{\imnb s \log L}\right]+ \delta,\] 
where $|\delta| \le K/\sqrt{N}$.
Moreover, $K$, seen as a function of $\beta$, is bounded on any interval $[0,\beta']$ when $\alpha\beta'^2K_{\utt}^4K_{\vtt}^4<1$. 
\end{proposition}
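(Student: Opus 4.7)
The plan is to combine the Nishimori identity with Gaussian integration by parts on the noise $\W$ to derive a self-consistent identity for $M\E\langle u_1 v_1\rangle^2 e^{\imnb s\log L}$, whose solution will produce the geometric factor $\frac{\alpha\beta}{1-\alpha\beta^2}$. The first reduction uses \eqref{nishimori_property} together with the permutation symmetry of the coordinates $(i,j)$ to rewrite
\[N\E\left[\left\langle R^{\utt}_{1,2} R^{\vtt}_{1,2}\right\rangle e^{\imnb s\log L}\right] = M\E\left[\left\langle u_1 v_1\right\rangle^2 e^{\imnb s\log L}\right],\]
converting the two-replica overlap into a single posterior moment at a single site, using that $e^{\imnb s\log L}$ is a $\Y$-measurable function and hence is unaffected by Nishimori substitution.

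Next, I would apply Gaussian integration by parts to the noise entries $W_{1j}$, $j=1,\ldots,M$, which enter the Hamiltonian through $Y_{1j}=\sqrt{\beta/N}u_1^*v_j^*+W_{1j}$. The elementary derivative identities
\[\partial_{W_{1j}}\log L = \sqrt{\tfrac{\beta}{N}}\langle u_1 v_j\rangle,\qquad \partial_{W_{1j}}\langle u_1 v_1\rangle = \sqrt{\tfrac{\beta}{N}}\bigl(\langle u_1^2 v_1 v_j\rangle-\langle u_1 v_1\rangle\langle u_1 v_j\rangle\bigr)\]
generate, after summation over $j$ and re-application of Nishimori inside each IBP step (folding the two-replica overlaps produced by IBP back into spike-replica overlaps), a decomposition of $M\E\langle u_1 v_1\rangle^2 e^{\imnb s\log L}$ into: a diagonal piece ($j=1$) yielding $\alpha\beta\,\E e^{\imnb s\log L}$ (from $\E\langle u_1^2 v_1^2\rangle\approx 1$); an off-diagonal piece ($j\neq 1$) reproducing $\alpha\beta^2\cdot M\E\langle u_1 v_1\rangle^2 e^{\imnb s\log L}$ after a second symmetrization; and residuals. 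Collecting, one arrives at
\[(1-\alpha\beta^2)\,M\E\left[\left\langle u_1 v_1\right\rangle^2 e^{\imnb s\log L}\right] = \alpha\beta\,\E\left[e^{\imnb s\log L}\right]+\mathrm{Err},\]
and dividing by $1-\alpha\beta^2>0$ (which is bounded below since $\alpha\beta^2\le \alpha\beta^2 K_\utt^4 K_\vtt^4<1$) gives the claim modulo $\mathrm{Err}$.

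The remaining task is to show $|\mathrm{Err}|\le K/\sqrt{N}$. The error has three sources: (i) a boundary discrepancy $|M-\alpha N|/N = O(1/\sqrt{N})$ from the assumption $M = \alpha N + O(\sqrt{N})$; (ii) self-interaction terms from $\partial_{W_{1j}}\log L$ and from the $-\langle u_1 v_1\rangle\langle u_1 v_j\rangle$ piece of $\partial_{W_{1j}}\langle u_1 v_1\rangle$, each carrying an extra factor $\sqrt{\beta/N}$ and bounded in absolute value by products of $K_\utt, K_\vtt$; and (iii) higher overlap moments such as $\E\langle(R^{\utt}_{1,2}R^{\vtt}_{1,2})^2\rangle$, which via Nishimori equal spike-replica quantities of size $O(1/N^2)$. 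The prior boundedness constants enter here precisely because the worst-case bound $u_i^2 v_j^2\le K_\utt^2 K_\vtt^2$ (not its mean $1$) is what closes a self-consistent bootstrap on $M\E\langle u_1 v_1\rangle^2$, yielding the stated threshold $\alpha\beta^2 K_\utt^4 K_\vtt^4<1$; uniformity of $K$ on $[0,\beta']$ follows because $(1-\alpha\beta'^2 K_\utt^4 K_\vtt^4)^{-1}$ is bounded on such intervals. The main obstacle is the clean organization of the IBP expansion so that the $\alpha\beta^2$ coefficient emerges identifiably on the right-hand side and the residuals admit a uniform bound; this requires judicious interleaving of Nishimori with each IBP step to fold two-replica quantities back into the self-similar structure, and a bootstrap that simultaneously uses the identity being proved to control its own error terms.
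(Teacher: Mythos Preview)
Your initial reduction to $M\E[\langle u_1 v_1\rangle^2 e^{\imnb s\log L}]$ via permutation symmetry is correct (Nishimori is not actually needed there; it is just the replica identity $\langle u_i v_j\rangle^2=\langle u_i^{(1)}v_j^{(1)}u_i^{(2)}v_j^{(2)}\rangle$). But the rest of the proposal has two genuine gaps.

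\textbf{The self-consistency derivation is not well-posed.} You propose to integrate by parts against the Gaussians $W_{1j}$, but the quantity $M\E[\langle u_1 v_1\rangle^2 e^{\imnb s\log L}]$ has no explicit Gaussian prefactor to IBP against. Replacing $\langle u_1 v_1\rangle^2$ by $u_1^* v_1^*\langle u_1 v_1\rangle$ via Nishimori does not help either, since $u_1^*,v_1^*$ are not Gaussian. The paper does not proceed by direct IBP; it runs a two-step \emph{cavity} argument: an interpolating Hamiltonian decouples $u_N$ (Taylor expansion to second order in the interpolation parameter yields $N\E[\langle R^{\utt}_{1,*}R^{\vtt}_{1,*}\rangle X]\approx N\beta\,\E[\langle(R^{\vtt}_{1,*})^2\rangle X]$), and then a second interpolation decouples $v_M$ (yielding $N\E[\langle(R^{\vtt}_{1,*})^2\rangle X]\approx \tfrac{M}{N}\E[X]+M\beta\,\E[\langle R^{\utt}_{1,*}R^{\vtt}_{1,*}\rangle X]$). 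The factor $\alpha\beta^2$ arises from the \emph{composition} of these two cavity steps, not from a single IBP in $j$. Your description of how ``an off-diagonal piece reproduces $\alpha\beta^2\cdot M\E\langle u_1 v_1\rangle^2$'' does not correspond to any computation that I can reconstruct.

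\textbf{The error control is the heart of the matter, and you have skipped it.} The error terms produced by the cavity expansion are cubic in the overlaps, of the form $N\E\langle |R^{\utt}_{1,*}|^3\rangle$ and $N\E\langle |R^{\vtt}_{1,*}|^3\rangle$. Showing these are $O(N^{-1/2})$ requires $\E\langle (R^{\utt}_{1,*})^4\rangle\le K/N^2$, which is Proposition~\ref{fourth_moment_planted} and occupies most of the appendix. Nishimori does \emph{not} give this: it only asserts equality in law between replica-replica and replica-spike overlaps, not any rate of decay. The paper obtains the fourth-moment bound via (i) a separate second-moment bound through a quadratic replica-coupling interpolation, (ii) an interpolation bound at fixed overlap yielding an \emph{a priori} exponential concentration $\E\langle\indi\{|R_{1,*}|\ge\epsilon\}\rangle\le Ke^{-cN}$, and (iii) a further cavity self-consistency on the fourth moment itself. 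The condition $\alpha\beta^2 K_\utt^4 K_\vtt^4<1$ enters precisely in step (ii), through sub-Gaussian tail bounds on the priors in the fixed-overlap interpolation---not from ``the worst-case bound $u_i^2 v_j^2\le K_\utt^2 K_\vtt^2$ closing a bootstrap on $M\E\langle u_1 v_1\rangle^2$'' as you suggest. Without this machinery, you have no control on the residuals and the argument does not close.
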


Taking $s=0$, we see that $\theta =  \frac{\alpha \beta}{1-\alpha\beta^2}$. Proposition~\ref{asymptotic_decoupling} vindicates replica symmetry, and its proof occupies the majority of the rest of the manuscript. 

\vspace{.2cm}  
\noindent\begin{proofof}{Theorem~\ref{convergence_of_characteristic_function}}
Plugging the results of Proposition~\ref{asymptotic_decoupling} in the derivative computed in Lemma~\ref{derivative_phi_planted_lemma}, we obtain
\[\frac{\rmd}{\rmd \beta} \phi_N(\beta) = \left(\frac{\imnb s - s^2}{2}\frac{\alpha \beta}{1-\alpha\beta^2}\right)\phi_N(\beta) + \delta,\]
where $|\delta| \le \frac{K}{\sqrt{N}}\max\{|s|,s^2\}$, and $K$ is the constant from Proposition~\ref{asymptotic_decoupling}. 
Integrating w.r.t.\ $\beta$ we obtain
\[\abs{\phi_N(\beta) - \phi(s)} \le \frac{K'}{\sqrt{N}},\]
where $K'$ depends on $\alpha,\beta,s$ and $K_{\utt},K_{\vtt}$, and $K' <\infty$ as long as $\alpha\beta^2 K_{\utt}^4K_{\vtt}^4<1$. 
\end{proofof}

\vspace{.2cm}
Let us prove in passing the convergence of the $\KL$ divergence between the null and alternative. 

\vspace{.1cm}
\noindent\begin{proofof}{Proposition~\ref{kl_convergence}}
Similarly to the computation of the derivative of $\phi_N$, we can obtain
\[\frac{\rmd}{\rmd \beta} \E_{\P_{\beta}}\log L(\Y;\beta) =  -\frac{N}{2} \E\left \langle R^{\utt}_{1,2}R^{\vtt}_{1,2}\right \rangle
+N \E\left \langle R^{\utt}_{1,*}R^{\vtt}_{1,*}\right \rangle = \frac{N}{2} \E\left \langle R^{\utt}_{1,2}R^{\vtt}_{1,2}\right \rangle,\]
where we used the Nishimori property. By Proposition~\ref{asymptotic_decoupling} with $s=0$, this derivative is $K/\sqrt{N}$ away from $\half\frac{\alpha \beta}{1-\alpha\beta^2}$. Integration and boundedness of $K$ finishes the proof. 
\end{proofof}

\section{Overlap convergence}
The question of overlap convergence is purely a spin glass problem. We will use the machinery developed by Talagrand to solve it. In particular, a crucial use is made of the cavity method and Guerra's interpolation scheme. In this section, we present the main underlying ideas. The arguments are technically involved (but conceptually simple) so we delay their full execution to the Appendix. We refer to~\cite{talagrand2007obnoxious} for a leisurely high-level introduction to these ideas. 

\subsection{Sketch of proof of Proposition~\ref{asymptotic_decoupling}}
The basic idea is to show that the quantities of interest approximately obey a self-consistent (or self-bounding) property, the error terms of which can be controlled. This approach will be used at different stages of the proof.  
We will show that 
\[N\E\left[\left\langle R^{\utt}_{1,2}R^{\vtt}_{1,2}\right\rangle e^{\imnb s \log L}\right] = \alpha\beta \E\left[e^{\imnb s \log L}\right]+\alpha\beta^2 N \E\left[\left\langle R^{\utt}_{1,2}R^{\vtt}_{1,2}\right\rangle e^{\imnb s \log L}\right] + \delta, \]
where $\delta$ is the error term. This will be achieved in two steps. We first prove
\begin{equation}\label{cavity_uv_to_vv}
N\E\left[\left\langle R^{\utt}_{1,2} R^{\vtt}_{1,2}\right\rangle e^{\imnb s \log L}\right] = N\beta \E\left[\left\langle (R^{\vtt}_{1,2})^2\right\rangle e^{\imnb s \log L}\right] + \delta,
\end{equation}
via a cavity on $N$, i.e., by isolating the effect of the last variable $u_N$ on the rest of the variables. We then show 
\begin{equation}\label{cavity_vv_to_uv}
N\E\left[\left\langle (R^{\vtt}_{1,2})^2\right\rangle e^{\imnb s \log L}\right]  = \frac{M}{N}  \E\left[e^{\imnb s \log L}\right]  + M \beta \E\left[\left\langle R^{\utt}_{1,2} R^{\vtt}_{1,2}\right\rangle e^{\imnb s \log L}\right] +\delta,
\end{equation}
via a cavity on $M$, i.e., isolating the effect of $v_M$. In the arguments leading to~\eqref{cavity_uv_to_vv} and~\eqref{cavity_vv_to_uv}, we accumulate error terms that are proportional to the third moments of the overlaps:
\begin{equation}\label{error_term_delta}
\delta \lesssim N\E\left\langle |R^{\utt}_{1,2}|^{3}\right\rangle + N\E\left\langle |R^{\vtt}_{1,2}|^{3}\right\rangle,
\end{equation}
where we hide constants depending on $\alpha$ and $\beta$.  
These cavity equations impose only a mild restriction on the parameters so that our bounds go in the right direction, namely that $\alpha \beta^2 <1$. This is about to change. We prove that $\delta = \bigo(1/\sqrt{N})$ with methods that impose the stronger restrictions on $(\alpha,\beta)$ that ultimately appear in the final result.

\subsection{Convergence in the planted model: from crude estimates to optimal rates}
We prove overlap convergence under the alternative. Let $\Y \sim \P_{\beta}$. 
\begin{proposition}\label{fourth_moment_planted}
For all $\alpha,\beta \ge 0$ such that $K_{\utt}^4K_{\vtt}^4\alpha\beta^2 < 1$, there exists $K = K(\alpha,\beta) <\infty$ such that
\[\E\left\langle (R^{\utt}_{1,2})^4\right\rangle \vee \E\left\langle (R^{\vtt}_{1,2})^4\right\rangle \le \frac{K}{N^2}.\]
\end{proposition}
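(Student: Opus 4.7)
My plan is to combine the Nishimori identity~\eqref{nishimori_property} with a cavity computation in a two-stage bootstrap. Nishimori gives $\E\langle (R^{\utt}_{1,2})^4\rangle = \E\langle (R^{\utt}_{1,*})^4\rangle$, which reduces the problem to the fourth moment of the overlap between a replica and the (conditionally fixed) spike. This is much more tractable because expanding the fourth power yields
\begin{equation*}
\E\langle (R^{\utt}_{1,*})^4\rangle = \frac{1}{N^4}\sum_{i_1,\ldots,i_4}\E\bigl[u^*_{i_1}u^*_{i_2}u^*_{i_3}u^*_{i_4}\langle u_{i_1}u_{i_2}u_{i_3}u_{i_4}\rangle\bigr],
\end{equation*}
where only a few ``diagonal'' index patterns contribute substantially and can be analyzed one site at a time.

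In the first stage, I would obtain a crude bound, say $\E\langle (R^{\utt}_{1,2})^2\rangle = O(1/N)$ (and similarly for $R^{\vtt}$), via a Guerra-type interpolation, or by applying the Gaussian Poincar\'e inequality to $\log L$ and converting the resulting variance bound into an overlap bound through the identity $\var(\log L) \propto N\E\langle R^{\utt}_{1,2}R^{\vtt}_{1,2}\rangle$ already exploited in the introduction. This crude estimate serves as input to the sharp step.

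In the second stage I would perform two cavity computations: one isolating a single $u$-site (say $u_N$) and a symmetric one isolating a single $v$-site (say $v_M$). Writing $-H = -H^- + u_N\xi_N - \tfrac{\beta}{2N}u_N^2\|\v\|^2$, where $-H^-$ is the cavity Hamiltonian and $\xi_N$ is an effective field built from $(W_{Nj}, u_N^*, v_j, v_j^*)_{j\le M}$, then integrating $u_N$ against $P_{\utt}$ and applying Gaussian integration by parts on the fresh noise $W_{Nj}$, one reduces the fourth moment of $R^{\utt}_{1,*}$ on the full system to the fourth moment of $R^{\vtt}_{1,*}$ on the cavity system, plus explicit lower-order error terms. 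Executing the symmetric argument on the $v$ side and carefully tracking constants, the two steps combine into a coupled self-bounding inequality of the form
\begin{equation*}
\E\langle (R^{\utt}_{1,2})^4\rangle \vee \E\langle (R^{\vtt}_{1,2})^4\rangle \le \frac{C}{N^2} + \alpha\beta^2 K_{\utt}^4 K_{\vtt}^4 \left(\E\langle (R^{\utt}_{1,2})^4\rangle \vee \E\langle (R^{\vtt}_{1,2})^4\rangle\right).
\end{equation*}
Since $\alpha\beta^2 K_{\utt}^4 K_{\vtt}^4 < 1$ by hypothesis, the inequality can be inverted to yield the desired $O(1/N^2)$ bound.

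The hardest part will be arranging the cavity expansion so that the coefficient of the recursive term is \emph{exactly} $\alpha\beta^2 K_{\utt}^4 K_{\vtt}^4$ and not something larger. The expansion produces many cross terms involving three or more replicas whose overlaps mix $R^{\utt}$ and $R^{\vtt}$; these must be reduced to fourth moments without inflating the constants. Concretely, the worst-case bounds $|u_i|\le K_{\utt}$ and $|v_j|\le K_{\vtt}$ should be invoked only at the single location in the expansion where the algebraic structure already isolates the prefactor $\alpha\beta^2$, while the crude second-moment bound from the first stage is used elsewhere to absorb residual errors into the $C/N^2$ term. This delicate bookkeeping is precisely what forces the sharp threshold $\alpha\beta^2 K_{\utt}^4 K_{\vtt}^4 < 1$, and is the same mechanism that later propagates into the $1/\sqrt{N}$ rate in Proposition~\ref{asymptotic_decoupling} via the third-moment error bound~\eqref{error_term_delta}.
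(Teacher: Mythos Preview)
Your overall architecture is right---Nishimori, a preliminary second-moment bound, then two cavity steps (one on a $\utt$-site, one on a $\vtt$-site) combined into a self-bounding inequality---and matches the paper's proof. But there is a genuine gap in how you expect to close the recursion.

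The coefficient the cavity actually produces is $\alpha\beta^2$, \emph{not} $\alpha\beta^2 K_{\utt}^4 K_{\vtt}^4$. In the second-order Taylor expansion $\nu_1(f)\approx\nu_0(f)+\nu_0'(f)$ with $f=u_N^{(1)}u_N^*(R_{1,*}^{\utt-})^3$, the surviving contribution at $t=0$ carries the factor $\E[u_N^2]\E[(u_N^*)^2]=1$, giving $\beta\,\nu_0((R_{1,*}^{\utt})^3 R_{1,*}^{\vtt})$; the symmetric $\vtt$-cavity produces another factor $\alpha\beta$. So the leading recursion is $\E\langle (R_{1,*}^{\utt})^4\rangle \approx \alpha\beta^2\,\E\langle (R_{1,*}^{\utt})^4\rangle + (\text{error})$. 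The error consists of \emph{fifth}-order monomials in the overlaps, of the type $\E\langle |(R_{1,*}^{\utt})^3 R_{a,b}^{\vtt} R_{c,d}^{\vtt}|\rangle$, coming from the second derivative $\nu_t''(f)$ and from converting $\nu_0$ back to $\nu_1$.

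These fifth-order terms cannot be absorbed into $C/N^2$ using only the second-moment bound. If you bound one overlap factor by its sup-norm and H\"older the rest, you get a contribution of size $C\cdot M$ with an uncontrolled constant $C$ that does \emph{not} combine with $\alpha\beta^2$ to give $\alpha\beta^2 K_{\utt}^4 K_{\vtt}^4$; instead you obtain a condition like $\alpha\beta^2 + C' < 1$, which is useless. The paper handles this with an additional ingredient you are missing: a \emph{crude large-deviation estimate} $\E\langle \indi\{|R_{1,*}|\ge\epsilon\}\rangle \le K e^{-cN}$ (see~\eqref{crude_bound} and Proposition~\ref{convergence_in_probability}), obtained by an interpolation bound at fixed overlap (Proposition~\ref{interpolation_bound_fixed_overlap}) combined with Gaussian concentration. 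This lets one write, for every $\epsilon>0$, $\text{(fifth order)}\le \epsilon\cdot M + K e^{-cN}$, so the effective recursion coefficient becomes $\alpha\beta^2 + K\epsilon$. The threshold $K_{\utt}^4 K_{\vtt}^4\alpha\beta^2<1$ enters not through the cavity coefficient but as the region in which this crude bound is valid. Without that step, your bootstrap does not close.
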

The proof proceeds as follows.
We use the cavity method to show the following self-consistency equations:
\begin{align}
\E\left \langle (R^{\utt}_{1,2})^4\right \rangle &= \alpha \beta^2\E\left \langle (R^{\utt}_{1,2})^4\right \rangle + \widebar{M}_{\utt}+\delta_{\utt} \label{bound_fourth_moment_u},\\
\E\left \langle (R^{\vtt}_{1,2})^4\right \rangle &= \alpha \beta^2\E\left \langle (R^{\vtt}_{1,2})^4\right \rangle + \widebar{M}_{\vtt}+\delta_{\vtt},
\label{bound_fourth_moment_v}
\end{align} 
where $|\widebar{M}_{\utt}|, |\widebar{M}_{\vtt}|$ are bounded by sums of expectations of monomials of degree five in the overlaps $R^{\utt}$ and $R^{\vtt}$:
\[|\widebar{M}_{\utt}| \lesssim \sum_{a,b,c,d} \E\left \langle \abs{(R^{\utt}_{1,2})^3R^{\utt}_{a,b}R^{\utt}_{c,d}}\right \rangle + \E\left \langle \abs{(R^{\utt}_{1,2})^3R^{\vtt}_{a,b}R^{\vtt}_{c,d}}\right \rangle,\] 
\[ |\widebar{M}_{\vtt}| \lesssim \sum_{a,b,c,d} \E\left \langle \abs{(R^{\vtt}_{1,2})^3R^{\vtt}_{a,b}R^{\vtt}_{c,d}}\right \rangle + \E\left \langle \abs{(R^{\vtt}_{1,2})^3R^{\utt}_{a,b}R^{\utt}_{c,d}}\right \rangle,\] 
where the sum is over a finite number of combinations $(a,b,c,d)$, and 
\begin{equation*}
\delta_{\utt} \lesssim \frac{1}{N}\E\left \langle (R^{\utt}_{1,2})^2\right \rangle + \bigo\Big(\frac{1}{N^2}\Big),
\qquad 
\delta_{\vtt} \lesssim \frac{1}{N}\E\left \langle (R^{\vtt}_{1,2})^2\right \rangle + \bigo\Big(\frac{1}{N^2}\Big).
\end{equation*}
These results hold for \emph{all} $\alpha, \beta \ge 0$. From here, further progress is unlikely unless one has \emph{a priori} knowledge that the overlaps are unlikely to be large, so that the fifth-order terms do not overwhelm the main terms. More precisely, suppose that we are able to prove the following crude bound on the overlaps: for $\epsilon>0$, there is $K = K(\epsilon,\alpha,\beta)>0$ such that
\begin{equation}\label{crude_bound}
\E\left \langle \indi\left\{\big|R^{\utt}_{1,2}\big| \ge \epsilon \right\}\right \rangle \vee \E\left \langle \indi\left\{\big|R^{\vtt}_{1,2}\big| \ge \epsilon \right\}\right \rangle \le Ke^{-N/K}.
\end{equation}
Then the fifth-order terms can be controlled by fourth-order terms as follows:
\begin{align*}
\E\left \langle \abs{(R^{\utt}_{1,2})^3R^{\vtt}_{a,b}R^{\vtt}_{c,d}}\right \rangle &\le \epsilon \E\left \langle \abs{(R^{\utt}_{1,2})^3R^{\vtt}_{a,b}}\right \rangle + K_{\utt}^6 K_{\vtt}^4 K e^{-N/K}\\
&\le\epsilon M + Ke^{-N/K},
\end{align*}
where $M = \E \langle (R^{\utt}_{1,2})^4 \rangle \vee \E \langle (R^{\vtt}_{1,2})^4 \rangle$, and the last step is by H\"{o}lder's inequality. This way, $\widebar{M}_{\utt}$ and $\widebar{M}_{\vtt}$ are controlled. Now it remains to control $\delta_{\utt}$ and $\delta_{\vtt}$. We could re-execute the cavity argument on the second moment instead of the fourth, and this would allow us to obtain $\E \langle (R^{\utt}_{1,2})^2 \rangle \vee \E \langle (R^{\vtt}_{1,2})^2 \rangle \le K/N$. We instead use a shorter argument based on an elegant \emph{quadratic replica coupling} technique of \cite{guerra2002quadratic} to prove this. This is presented in Appendix~\ref{sxn:proof_of_convergence_second_moment}.  Plugging these estimates into~\eqref{bound_fourth_moment_u} and~\eqref{bound_fourth_moment_v}, we obtain  
 \begin{align*}
\E\left \langle (R^{\utt}_{1,2})^4\right \rangle &\le \alpha \beta^2\E\left \langle (R^{\utt}_{1,2})^4\right \rangle + K\epsilon M + \delta',\\
\E\left \langle (R^{\vtt}_{1,2})^4\right \rangle &\le \alpha \beta^2\E\left \langle (R^{\vtt}_{1,2})^4\right \rangle + K\epsilon M + \delta',
\end{align*} 
where $\delta' \le K/N^2 + K e^{-N/K}$, and this implies the desired result for $\epsilon$ sufficiently small. 

The a priori bound~\eqref{crude_bound} is proved via an interpolation argument at fixed overlap, combined with concentration of measure, and is presented in Appendices~\ref{sxn:interpolation_at_fixed_overlap} and~\ref{sxn:proof_of_crude_bound}. These arguments impose a restriction on the parameters $(\alpha,\beta)$ that shows up in the final result. Finally, Proposition~\ref{fourth_moment_planted} allow us to conclude (via Jensen's inequality) that the error term $\delta$ displayed in~\eqref{error_term_delta} is bounded by $K/\sqrt{N}$.

\section{Discussion}
\label{sxn:discussion}
The limiting factor in our approach to prove LR fluctuations is the need for precise non-asymptotic control of moments of the overlaps $R_{1,2}^{\utt}$ and $R_{1,2}^{\vtt}$ under the expected Gibbs measure $\E \langle \cdot \rangle$. We were able to reach this level of control only in a restricted regime. This is  due to the failure of our approach to prove the crude estimate~\eqref{crude_bound} in a larger region. In this section, we formulate a conjecture on the largest region where these fluctuations and overlap decay should occur. In one sentence, this should be the entire \emph{annealed} or \emph{paramagnetic} region of the model, as dictated by the vanishing of its replica-symmetric ($\RS$) formula. We shall now be more precise.        

Let $z \sim \normal(0,1)$, $u^* \sim P_\utt$ and $v^* \sim P_\vtt$ all independent. Define 
\begin{align*}
\psi_{\utt}(r) &:= \E_{u^*,z} \log \int \exp\left(\sqrt{r}zu + r uu^* - \frac{r}{2} u^2\right) \rmd P_\utt(u),\\
\psi_{\vtt}(r) &:= \E_{v^*,z} \log \int \exp\left(\sqrt{r}zv + r vv^* - \frac{r}{2} v^2\right) \rmd P_\vtt(v).
\end{align*}

Moreover, define the $\RS$ potential as
\[F(\alpha,\beta, q_{\utt}, q_{\vtt}) := \psi_{\utt}(\beta q_{\vtt}) + \alpha \psi_{\vtt}(\beta  q_{\utt}) -\frac{\beta  q_{\utt} q_{\vtt}}{2}.\]
and finally define the $\RS$ formula as
\[\phi_{\RS}(\alpha,\beta) := \sup_{q_{\vtt} \ge 0}~ \inf_{q_{\utt}\ge 0}~ F(\alpha,\beta, q_{\utt}, q_{\vtt}).\]
It was argued by~\cite{lesieur2015mmse} based on the plausibility of the replica-symmetric ansatz, and then proved by~\cite{miolane2017fundamental}, that in the limit $N \to \infty, M/N \to \alpha$,\\ $\frac{1}{N}\E_{\P_{\beta}}\log L(\Y;\beta) \rightarrow \phi_{\RS}(\alpha,\beta)$ for all $\alpha,\beta \ge 0$. \citep[See also][for results in a more general setup.]{barbier2017phase} Of course, by change of measure and Jensen's inequality, 
\[\E_{\P_{\beta}} \log  L(\Y;\beta) = \E_{\P_{0}} L(\Y;\beta)\log  L(\Y;\beta) \ge 0,\]
for all $M,N$; therefore $\phi_{\RS}$ is always nonnegative. Let 
\[\Gamma = \left\{(\alpha,\beta) \in \Rp ~:~ \phi_{\RS}(\alpha,\beta) = 0\right\}.\]

It is not hard to prove the following lemma by analyzing the stability of $(0,0)$ as a stationary point of the $\RS$ potential:   
\begin{lemma}
$\Gamma \subseteq \{(\alpha,\beta) \in \Rp~:~\alpha \beta^2 \le 1\}$.
\end{lemma}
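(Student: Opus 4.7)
The plan is to prove the contrapositive: if $\alpha\beta^2 > 1$, then $\phi_{\RS}(\alpha,\beta) > 0$, and hence $(\alpha,\beta) \notin \Gamma$. (The case $\alpha = 0$ is trivial since then $\alpha\beta^2 = 0$.) For $\alpha > 0$, the strategy is to exhibit an explicit $q_\vtt > 0$ such that $\inf_{q_\utt \ge 0} F(\alpha,\beta, q_\utt, q_\vtt) > 0$, by analyzing $F$ to second order around its critical point $(q_\utt, q_\vtt) = (0,0)$.

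First I would record the local behavior of $\psi_\utt$ and $\psi_\vtt$ at the origin. Both are smooth, convex, nonnegative on $\Rp$ and vanish at $0$ (convexity and nonnegativity being standard consequences of their interpretation as Bayes-optimal free energies). The $I$-MMSE identity for the scalar Gaussian channel $y = \sqrt{r}\, u^* + z$ yields $\psi_\utt'(r) = \tfrac{1}{2}\bigl(1 - \mathrm{MMSE}_\utt(r)\bigr)$. At $r = 0$ the posterior equals the prior, so $\mathrm{MMSE}_\utt(0) = \mathrm{Var}(P_\utt) = 1$, and a short expansion of the posterior variance (or a direct computation using $\E[u^*] = 0$, $\E[(u^*)^2] = 1$) gives $\mathrm{MMSE}_\utt'(0) = -1$. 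Therefore $\psi_\utt'(0) = 0$ and $\psi_\utt''(0) = \tfrac12$, and identically for $\psi_\vtt$.

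Consequently $(0,0)$ is a critical point of $F$ with Taylor expansion
\[
F(\alpha,\beta,q_\utt,q_\vtt) = \frac{\alpha\beta^2}{4}\,q_\utt^2 \;-\; \frac{\beta}{2}\,q_\utt q_\vtt \;+\; \frac{\beta^2}{4}\,q_\vtt^2 \;+\; o\bigl(q_\utt^2 + q_\vtt^2\bigr),
\]
whose Hessian has determinant $\beta^2(\alpha\beta^2 - 1)/4$. Fix $q_\vtt \in (0,\alpha)$ small. Strict convexity of $\psi_\vtt$ makes $q_\utt \mapsto F(\alpha,\beta,q_\utt,q_\vtt)$ strictly convex; its derivative at $q_\utt=0$ is $-\beta q_\vtt/2 < 0$ while, using $\psi_\vtt'(r) \to 1/2$ as $r \to \infty$, it tends to $\beta(\alpha - q_\vtt)/2 > 0$ as $q_\utt \to \infty$. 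So a unique interior minimizer $q_\utt^*(q_\vtt) > 0$ exists, determined by $\psi_\vtt'(\beta q_\utt) = q_\vtt/(2\alpha)$. The implicit function theorem, using $\psi_\vtt''(0) = \tfrac{1}{2} \ne 0$, gives $q_\utt^*(q_\vtt) = q_\vtt/(\alpha\beta) + o(q_\vtt)$; plugging this back into the expansion of $F$ yields
\[
\inf_{q_\utt \ge 0} F(\alpha,\beta, q_\utt, q_\vtt) \;=\; \frac{\alpha\beta^2 - 1}{4\alpha}\,q_\vtt^2 \;+\; o(q_\vtt^2),
\]
which is strictly positive for all sufficiently small $q_\vtt > 0$ whenever $\alpha\beta^2 > 1$. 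This forces $\phi_{\RS}(\alpha,\beta) > 0$, contradicting $(\alpha,\beta) \in \Gamma$.

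The main obstacle is the correct evaluation of the first two derivatives of $\psi_\utt$ at $0$: a brute-force Taylor expansion of the integrand in powers of $\sqrt{r}$ generates numerous half-integer terms whose cancellation hinges on the zero-mean, unit-variance assumptions on $P_\utt$ and on symmetries of the Gaussian noise $z$. Routing through the $I$-MMSE identity together with the standard facts $\mathrm{MMSE}(0) = \mathrm{Var}(P_\utt)$ and $\mathrm{MMSE}'(0) = -\mathrm{Var}(P_\utt)^2$ circumvents these manipulations entirely. The remaining ingredients — quadratic expansion, partial minimization, and the implicit function theorem — are routine.
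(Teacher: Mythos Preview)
Your proof is correct and follows exactly the approach the paper indicates (the paper does not give a detailed proof, only the hint ``by analyzing the stability of $(0,0)$ as a stationary point of the $\RS$ potential''): you compute the second-order expansion of $F$ at the origin, identify the saddle structure when $\alpha\beta^2>1$, and exhibit a small $q_{\vtt}>0$ for which the inner infimum over $q_{\utt}$ is strictly positive. The use of the I-MMSE relation to obtain $\psi'(0)=0$ and $\psi''(0)=\tfrac12$ is a clean way to avoid the tedious direct expansion.
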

This lemma tells us (unsurprisingly) that $\Gamma$ is entirely below the BBP threshold. The inclusion may or may not be strict depending on the priors $P_{\utt}$ and $P_{\vtt}$. For instance, there is equality of the above sets if $P_{\utt}$ and $P_{\vtt}$ are symmetric Rademacher and/or Gaussian respectively.     
One case of strict inclusion is when $P_{\vtt}$ is Gaussian $\normal(0,1)$ and $P_{\utt}$ is a sparse Rademacher prior, $\frac{\rho}{2}\delta_{1/\sqrt{\rho}}+ (1-\rho)\delta_{0} + \frac{\rho}{2}\delta_{-1/\sqrt{\rho}}$, for sufficiently small $\rho$ (e.g., $\rho = .04$). This is a canonical model for sparse principal component analysis. In this case, there is a region of parameters below the BBP threshold where the posterior mean $\E[\u^*|\Y]$ ($= \langle \u\rangle$ in our notation) has a non-trivial overlap with the spike $\u^*$, while the top eigenvector of the empirical covariance matrix $\Y\Y^\intercal$ is orthogonal to it. Estimation becomes impossible only in the region $\Gamma$, so the following conjecture is highly plausible:

\begin{conjecture}
Let $\Gamma'$ be the interior of $\Gamma$. For all $(\alpha,\beta) \in \Gamma'$, 
\[\log L(\Y,\beta) \rightsquigarrow \normal\left(\pm \frac{1}{4}\log(1-\alpha\beta^2),-\frac{1}{2}\log(1-\alpha\beta^2)\right),\]
where the plus sign holds under the null $\P_{0}$ and the minus sign under the alternative $\P_{\beta}$.
\end{conjecture}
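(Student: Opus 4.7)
The derivative identity of Lemma~\ref{derivative_phi_planted_lemma} is an exact computation and therefore holds for every $(\alpha,\beta)$, not just those in the restricted region. Consequently, the Gaussian shape of $\log L$ is forced by the same ODE
\[\frac{\rmd}{\rmd\beta}\phi_N(\beta) \;\simeq\; \frac{\imnb s - s^2}{2}\,\theta(\beta)\,\phi_N(\beta),\qquad \theta(\beta) \;=\; \frac{\alpha\beta}{1-\alpha\beta^2},\]
provided only that the asymptotic decoupling of Proposition~\ref{asymptotic_decoupling} can be pushed to the whole of $\Gamma'$. The preceding lemma, asserting $\Gamma\subseteq\{\alpha\beta^2\le 1\}$, ensures that $\theta$ and the conjectured mean/variance $\pm\tfrac14\log(1-\alpha\beta^2)$, $-\tfrac12\log(1-\alpha\beta^2)$ remain finite on $\Gamma'$. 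Proving the conjecture therefore reduces to enlarging the validity of Proposition~\ref{asymptotic_decoupling} from $K_\utt^4 K_\vtt^4\alpha\beta^2<1$ to all of $\Gamma'$. Tracing the reductions of Section~4--5, the cavity equations~\eqref{cavity_uv_to_vv}--\eqref{cavity_vv_to_uv} and the fourth-moment bootstrap~\eqref{bound_fourth_moment_u}--\eqref{bound_fourth_moment_v} already work under $\alpha\beta^2<1$; every other piece of the proof is verbatim. The entire gap therefore sits inside the a priori crude bound~\eqref{crude_bound}.

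\noindent\textbf{Strategy for the crude bound on $\Gamma'$.} I would try to show that~\eqref{crude_bound} is precisely the large-deviation statement dual to the variational characterisation of $\Gamma$. For each target pair $(q_\utt,q_\vtt)\in\R_+^2$, the plan is to run a bipartite Guerra--Toninelli interpolation at fixed overlap: at $t=0$ the Hamiltonian is~\eqref{hamiltonian_general}, and at $t=1$ the bilinear $\u$-$\v$ interaction is replaced by two decoupled one-body Hamiltonians of exactly the type whose log-partition functions appear in the definitions of $\psi_\utt(\beta q_\vtt)$ and $\psi_\vtt(\beta q_\utt)$. Writing out the sum rule, exploiting the Nishimori identity~\eqref{nishimori_property} along the path, and restricting the Gibbs measure to a thin $\eta$-slab around $(q_\utt,q_\vtt)$, I expect an inequality of the form
\[\frac{1}{N}\log \E\!\left\langle \indi\{|R^{\utt}_{1,2}-q_\utt|+|R^{\vtt}_{1,2}-q_\vtt|\le\eta\}\right\rangle \;\le\; -\bigl(F(\alpha,\beta,q_\utt,q_\vtt) - \phi_{\RS}(\alpha,\beta)\bigr) + o_\eta(1).\]
For $(\alpha,\beta)\in\Gamma'$ the right-hand side is strictly negative on every slab with $(q_\utt,q_\vtt)\neq (0,0)$, by the very definition of the interior of $\Gamma$. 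A compactness/covering argument over the compact overlap range $[-K_\utt^2,K_\utt^2]\times[-K_\vtt^2,K_\vtt^2]$, combined with Gaussian concentration of the restricted free energy around its mean, then assembles these slab estimates into the exponential bound~\eqref{crude_bound}, which is exactly what the downstream cavity arguments require.

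\noindent\textbf{Main obstacle.} The hardest step is engineering the fixed-overlap bipartite interpolation so that the remainder in the sum rule is sign-controlled. In the Wigner case treated in~\cite{alaoui2017finite}, a single Gaussian cavity field together with the Nishimori identity forces this remainder to be nonnegative and the inequality drops out immediately. In the rectangular case one must freeze $q_\utt$ and $q_\vtt$ simultaneously, and the sum rule produces a cross term of the form $\E\langle (R^{\utt}_{1,2}-q_\utt)(R^{\vtt}_{1,2}-q_\vtt)\rangle$ whose sign cannot be read off the Nishimori property alone. Absent a clever choice of interpolation path --- for example, a one-parameter curve $t\mapsto(q_\utt(t),q_\vtt(t))$ along which the RS surface $F$ is convex, or an alternating two-step scheme that freezes one overlap while interpolating in the other and invokes Nishimori at each step to flip the sign of the cross term --- this remainder will swamp the slab bound and the crude estimate will again hold only in a sub-optimal region. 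Constructing such an interpolation, and verifying that in the limit $\eta\downarrow 0$ it recovers exactly the RS potential $F$ on every slab, is the conceptual bottleneck separating Theorem~\ref{fluctuations_main_theorem} from the conjectured statement.
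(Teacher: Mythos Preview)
The statement you are attempting is a \emph{conjecture} in the paper, not a theorem; the paper offers no proof and explicitly leaves it open. Your submission is likewise not a proof but a research plan, and you say so candidly in your final paragraph. So there is no ``paper's proof'' to compare against, only the paper's discussion (Section~\ref{sxn:discussion}) of why the conjecture remains open.

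That said, your diagnosis lines up closely with the paper's own. Both you and the authors identify the crude overlap bound~\eqref{crude_bound} as the sole bottleneck, agree that the cavity equations and the derivative identity of Lemma~\ref{derivative_phi_planted_lemma} already hold throughout $\alpha\beta^2<1$, and propose a fixed-overlap interpolation tied to the $\RS$ potential as the natural line of attack. Your slab inequality $\frac{1}{N}\log\E\langle\indi_{\text{slab}}\rangle \le -(F-\phi_{\RS})+o_\eta(1)$ is a sharper articulation of what the authors gesture at when they write that ``no information about $\phi_{\RS}$ is used in the proof'' of their current bound. The main obstacle you name---the cross term $\E\langle(R^{\utt}_{1,2}-q_\utt)(R^{\vtt}_{1,2}-q_\vtt)\rangle$ with no controllable sign in the bipartite model---is exactly the issue the paper flags: ``certain terms that are hard to control have a sign in the symmetric case, hence they can be dropped to obtain a bound. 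This is no longer true (or at least not obviously so) in the bipartite case.'' The paper frames the natural attempt as a 2D Guerra--Talagrand bound with broken replica symmetry; your framing via a Nishimori-aided Guerra--Toninelli path is slightly different in spirit but runs into the same wall.

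In short: your plan is sound as a plan and matches the authors' own thinking, but it is not a proof, and the gap you isolate is precisely the gap the paper declares open. Neither you nor the paper has a mechanism to control the bipartite cross term, and until one is found the conjecture remains a conjecture.
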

 
Our conjecture is formulated only in the interior of $\Gamma$; this is not a superfluous condition since diverging behavior may appear at the boundary. Moreover, this conjecture is about the \emph{maximal} region in which such fluctuations can take place. This is not difficult to show. By (sub-Gaussian) concentration of the normalized likelihood ratio, we have for $\epsilon >0$
\[\P_{\beta}\Big(\frac{1}{N}\log L(\Y;\beta) - \phi_{\RS}(\alpha,\beta) \le -\epsilon\Big)  \longrightarrow 0,\]  
where $K = K(\alpha,\beta)<\infty$. This already shows that $\log L$ must grow with $N$ under the alternative if $\phi_{\RS} >0$. As for the behavior under the null,  
the same sub-Gaussian concentration holds, although the expectation is not known (see Question~\ref{our_question}):
\[\P_{0}\Big(\frac{1}{N}\log L(\Y;\beta) - \frac{1}{N}\E_{\P_{0}}\log L(\Y;\beta) \ge \epsilon\Big) \longrightarrow 0.\]
We do know however that the above expectation is non-positive, by Jensen's inequality. Therefore if $(\alpha,\beta)$ are such that $\phi_{\RS} >0$, one can distinguish $\P_{\beta}$ from $\P_{0}$ with asymptotic certainty by testing whether $\frac{1}{N}\log L(\Y;\beta)$ is above or below (say) $\half\phi_{\RS}(\alpha,\beta)$. This implies that $\P_{\beta}$ and $\P_{0}$ are not contiguous outside $\Gamma$. This---short of proving that $\log L$ grows in the negative direction with $N$---shows that the fluctuations cannot be of the above form under the null, since this would contradict Le Cam's first lemma.

The difficulty we encountered in our attempts to prove the above conjecture is a loss of control over the overlaps $R_{1,2}^{\utt}$ and $R_{1,2}^{\vtt}$ near the boundary of the set $\Gamma$. The interpolation bound at fixed overlap (between a replica and the spike) we used under the alternative $\P_{\beta}$ is vacuous beyond the region $\alpha\beta^2 <(K_{\utt}K_{\vtt})^{-4}$. It is possible that the latter bound could be marginally improved by more careful analysis, but this is unlikely to yield the optimal result since no information about $\phi_{\RS}$ is used in the proof. One can imagine refining this technique by constraining two replicas and using an interpolation with broken replica-symmetry, in the spirit of the ``2D" Guerra-Talagrand bound~\citep{guerra2003broken,talagrand2011mean2}. Although this strategy is successful in the symmetric model where $\u = \v$ it is not at all obvious why such an interpolation bound should be true in the bipartite case: in the analysis, certain terms that are hard to control have a sign in the symmetric case, hence they can be dropped to obtain a bound. This is no longer true (or at least not obviously so) in the bipartite case.

 Another interesting question concerns the LR asymptotics under the null, outside $\Gamma$. While under the alternative $\P_{\beta}$, the normalized log-likelihood ratio converges to the $\RS$ formula $\phi_{\RS}$ for all $(\alpha,\beta)$, no such simple formula is expected to hold under the null. Even the existence of a limit seems to be unknown. 
\begin{question}\label{our_question}
Does $\frac{1}{N}\E_{\P_{0}}\log L(\Y;\beta)$ have a limit for all $(\alpha,\beta)$? If so, what is its value? 
\end{question}
We refer to \cite{barra2011equilibrium,barra2014mean} and \cite{auffinger2014free} for some progress on the replica-symmetric phase, and \cite{panchenko2015multispecies} for progress on the related problem of the ``multispecies" SK model at all temperatures.

\begin{small}

\bibliographystyle{apalike}
\bibliography{../phase_transitions}
\end{small}

%\newpage
\appendix

\section{Fluctuation equivalence}
\label{sxn:fluctuation_equivalence}
We explain in this appendix how the fluctuation result under $\P_{\beta}$ implies the corresponding fluctuation result under $\P_{0}$. This is a consequence of the Portmanteau characterization of convergence in distribution. The argument can be made in the other direction as well. Assume that 
\[\log L(\Y;\beta) \rightsquigarrow \normal(\mu,\sigma^2),\]
 for $\Y \sim \P_{\beta}$, where $\mu = \half\sigma^2$.   
By the Portmanteau theorem~\citep[Lemma 2.2]{vandervaart2000asymptotic}, this is equivalent to the assertion
\begin{equation}\label{portmanteau_1}
 \liminf \E_{\P_{\beta}}\left[f(\log L)\right] \ge \E\left[f(Z)\right],
 \end{equation}
where $Z \sim \normal(\mu,\sigma^2)$ for all nonnegative continuous functions $f:\R \mapsto \Rp$. On the other hand, by a change of measure (and absolute continuity of $\P_{0}$ w.r.t $\P_{\beta}$), we have that for such an $f$,
\[\E_{\P_{0}}\left[f(\log L)\right] = \E_{\P_{\beta}}\left[ \frac{\rmd \P_{0}}{\rmd \P_{\beta}}  f(\log L)\right] = \E_{\P_{\beta}}\left[e^{-\log L}  f(\log L)\right].\]
The function $g:x\mapsto e^{-x}f(x)$ is still nonnegative continuous, so by~\eqref{portmanteau_1}, we have
\begin{equation}\label{portmanteau_2}
\liminf \E_{\P_{0}}\left[f(\log L)\right] \ge \E\left[e^{-Z}f(Z)\right].
\end{equation}
Since $\mu = \half\sigma^2$, 
\[\E\left[e^{-Z}f(Z)\right] = \int f(x)e^{-x}  e^{-(x-\mu)^2/2\sigma^2}\frac{\rmd x}{\sqrt{2 \pi \sigma^2}} = \int f(x) e^{-(x+\mu)^2/2\sigma^2}\frac{\rmd x}{\sqrt{2 \pi \sigma^2}} = \E\left[f(Z')\right],\]  
where $Z' \sim \normal(-\mu,\sigma^2)$. Since~\eqref{portmanteau_2} is valid for every nonnegative continuous $f$, the result 
\[\log L(\Y;\beta) \rightsquigarrow \normal(-\mu,\sigma^2)\]
under $\P_{0}$ follows. 

\section{Notation and useful lemmas}
\label{sxn:preliminaries}
We make repeated use of interpolation arguments in our proofs. In this section, we state a few elementary lemmas we subsequently invoke several times. We denote the overlaps between replicas when the last variables are deleted by a superscript $``-"$ :
\[R_{l,l'}^{\utt -} = \frac{1}{N}\sum_{i=1}^{N-1}u^{(l)}_iu^{(l')}_i ~~~\mbox{and}~~~ R_{l,l'}^{\vtt -} = \frac{1}{N}\sum_{j=1}^{M-1}v^{(l)}_jv^{(l')}_j.\]
If $\{H_t: t \in [0,1]\}$ is a generic family of random Hamiltonians, we let $\langle \cdot \rangle_t$ be the corresponding Gibbs average, and $\nu_t(f) =\E\left \langle f\right \rangle_t$, where the expectation is over the randomness of $H_t$. We will often write $\nu$ for $\nu_1$.

In our executions of the cavity method, we use interpolations that isolate one last  variable (either $u_N$ or $v_M$) from the rest of the system. Taking the first case an example, we consider 
\begin{align*}\label{interpolation_u}
-H_t(\u,\v) &=  \sum_{i=1}^{N-1} \sum_{j=1}^{M} \sqrt{\frac{\beta}{N}}W_{ij}u_i v_j + \frac{\beta}{N}u_iu_i^*v_jv_j^*- \frac{\beta}{2N}u_i^2v_j^2 \\
&~~+  \sum_{j=1}^M \sqrt{\frac{\beta t}{N}}W_{Nj}u_N v_j + \frac{\beta t}{N}u_Nu_N^*v_jv_j^* - \frac{\beta t}{2N}u_N^2v_j^2. \nonumber
\end{align*}  
\begin{lemma}\label{derivative_gibbs_average}
Let $f$ be a function of $n$ replicas $(\u^{(l)},\v^{(l)})_{1\le l \le n}$. Then 
\begin{align*}
\frac{\rmd}{\rmd t}\nu_t(f) &= \frac{\beta}{2} \sum_{1\le l\neq l' \le n} \nu_t(R^{\vtt}_{l,l'}u^{(l)}u^{(l')}f) 
- \frac{\beta}{2} n \sum_{l=1}^n \nu_t(R^{\vtt}_{l,n+1} u^{(l)}u^{(n+1)}f) \\
&~~+ \beta n \sum_{l=1}^n \nu_t(R^{\vtt}_{l,*} u^{(l)}u^{*}f) 
- \beta n  \nu_t(R^{\vtt}_{n+1,*} u^{(n+1)}u^{*}f) \\
&~~+  \beta \frac{n(n+1)}{2} \nu_t(R^{\vtt}_{n+1,n+2} u^{(n+1)}u^{(n+2)}f).
\end{align*}
\end{lemma}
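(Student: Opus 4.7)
I would prove this by a direct calculation, combining the chain rule for Gibbs averages with Gaussian integration by parts. First, since only the cavity row (the terms involving $u_N$) depends on $t$, we have
$$-\frac{\rmd H_t}{\rmd t}(\u,\v)=\sum_{j=1}^{M}\left(\frac{1}{2}\sqrt{\frac{\beta}{tN}}\,W_{Nj}u_Nv_j+\frac{\beta}{N}u_Nu_N^{*}v_jv_j^{*}-\frac{\beta}{2N}u_N^{2}v_j^{2}\right).$$
Differentiating the numerator and denominator of $\langle f\rangle_t$ and rewriting $\langle f\rangle_t\langle h\rangle_t$ as a Gibbs average over one additional independent replica $\u^{(n+1)}$ yields the standard identity
$$\frac{\rmd}{\rmd t}\nu_t(f)=\sum_{l=1}^{n}\nu_t\!\Big(f\cdot(-H_t')(\u^{(l)},\v^{(l)})\Big)-n\,\nu_t\!\Big(f\cdot(-H_t')(\u^{(n+1)},\v^{(n+1)})\Big).$$

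Next I substitute the three pieces of $-H_t'$ and handle the $W_{Nj}$ terms by Gaussian integration by parts, $\E[W_{Nj}G]=\E[\partial G/\partial W_{Nj}]$. Since $W_{Nj}$ enters each Boltzmann factor $e^{-H_t(\u^{(l')},\v^{(l')})}$ linearly via $\sqrt{\beta t/N}\,W_{Nj}u_N^{(l')}v_j^{(l')}$, the derivative in $W_{Nj}$ of any $k$-replica Gibbs bracket produces a sum over the $k$ replicas present, minus $k$ copies of the analogous expression in which one new replica is introduced. Combined with the prefactor $\tfrac12\sqrt{\beta/(tN)}\cdot\sqrt{\beta t/N}=\beta/(2N)$, this is exactly the normalization needed to turn $\tfrac{1}{N}\sum_{j=1}^{M}v_j^{(l)}v_j^{(l')}$ into $R^{\vtt}_{l,l'}$. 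The planted pieces translate directly into overlaps $R^{\vtt}_{l,*}$ and $R^{\vtt}_{n+1,*}$, while the self-interaction pieces contribute terms proportional to $(u_N^{(l)})^{2}R^{\vtt}_{l,l}$ and $(u_N^{(n+1)})^{2}R^{\vtt}_{n+1,n+1}$.

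Assembling everything, the diagonal IBP contributions (where the replica index $l'$ introduced by differentiating $e^{-H_t}$ coincides with the original index $l$, or with $n+1$ in the $-n$ part) cancel exactly against the corresponding self-interaction contributions. What remains are: (i) the off-diagonal $l\neq l'$ double sum inside $\{1,\ldots,n\}$, (ii) cross terms between the original replicas and the new replica $\u^{(n+1)}$ introduced by the $\sum_l$ IBP, (iii) cross terms between $\u^{(n+1)}$ and the still newer replica $\u^{(n+2)}$ introduced by the $-n$-part IBP, and (iv) the two planted interactions involving $\u^{*}$. These reassemble into the five terms on the right-hand side of the lemma. The main obstacle is purely combinatorial, namely tracking signs, factors of $n$, and the indexing of the newly introduced replicas; no analytic difficulty enters beyond Gaussian integration by parts.
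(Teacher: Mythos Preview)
Your proposal is correct and follows essentially the same approach as the paper, which simply states that the result is ``a simple computation based on Gaussian integration by parts, similarly to Lemma~\ref{derivative_phi_planted_lemma}.'' Your outline---differentiate the Gibbs average, apply $\E[W_{Nj}G]=\E[\partial_{W_{Nj}}G]$ to the disorder terms, and track the replica bookkeeping so that diagonal contributions cancel against the self-interaction terms---is exactly the intended computation.
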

\begin{proof}
This is a simple computation based on Gaussian integration by parts, similarly to Lemma~\ref{derivative_phi_planted}. 
\end{proof}
The next lemma allows us to control interpolated averages by averages at time $1$. 
\begin{lemma}\label{bound_time_dependent_average}
Let $f$ be a nonnegative function of $n$ replicas $(\u^{(l)},\v^{(l)})_{1\le l \le n}$. Then for all $t \in [0,1]$
\[\nu_t(f) \le K(n,\alpha,\beta) \nu(f).\]
\end{lemma}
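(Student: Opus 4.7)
The plan is to regard $\phi(t) := \nu_t(f)$ as a differentiable function of $t \in [0,1]$ and show its logarithmic derivative is uniformly bounded by a constant depending only on $n,\alpha,\beta,K_{\utt},K_{\vtt}$. A one-step Grönwall-type argument then gives $\nu_t(f) \le e^{C} \nu_1(f) = e^{C}\nu(f)$.

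\textbf{Step 1 (evaluate the derivative).} By Lemma~\ref{derivative_gibbs_average}, $\frac{\rmd}{\rmd t}\nu_t(f)$ is a finite sum (of cardinality depending only on $n$) of terms of the form
\[\beta\, c\, \nu_t\!\left(R^{\vtt}_{l,l'}\, u^{(l)}_N u^{(l')}_N\, f\right),\]
where the indices $l,l'$ range over $\{1,\dots,n+2,\ast\}$ and $|c|$ is bounded by a function of $n$ alone. The crucial structural point is that the interpolation parameter $t$ multiplies only those pieces of the Hamiltonian that involve $u_N$, so every term produced by Gaussian integration by parts carries the factor $u^{(l)}_N u^{(l')}_N$ from a single coordinate.

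\textbf{Step 2 (deterministic bounds on the integrand).} Since the priors have bounded supports of radius $K_{\utt}$ and $K_{\vtt}$, we have $|u^{(l)}_N| \le K_{\utt}$ almost surely for each replica (including the spike $\u^*$), and
\[|R^{\vtt}_{l,l'}| \;=\; \Bigl|\tfrac{1}{N}\sum_{j=1}^M v^{(l)}_j v^{(l')}_j\Bigr| \;\le\; \tfrac{M}{N} K_{\vtt}^2.\]
Under the hypothesis $M/N \to \alpha$, the quantity $M/N$ is bounded (say by $2\alpha$) for $N$ large, so that $|R^{\vtt}_{l,l'}\, u^{(l)}_N u^{(l')}_N| \le 2\alpha K_{\utt}^2 K_{\vtt}^2$ deterministically.

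\textbf{Step 3 (close the bound).} Because $f \ge 0$, the modulus of each term in the derivative is at most $2\alpha\beta|c| K_{\utt}^2 K_{\vtt}^2\, \nu_t(f)$. Summing the finitely many terms yields
\[\Bigl|\tfrac{\rmd}{\rmd t}\nu_t(f)\Bigr| \;\le\; C(n,\alpha,\beta,K_{\utt},K_{\vtt})\, \nu_t(f).\]
If $\nu_1(f)=0$, then $f=0$ $\rho$-a.s.\ and $\nu_t(f)=0$ trivially; otherwise the differential inequality $|(\log \nu_t(f))'|\le C$ integrated from $t$ to $1$ gives $\log \nu_t(f)-\log\nu_1(f) \le C(1-t) \le C$, hence $\nu_t(f)\le e^{C}\nu(f)$, with $K(n,\alpha,\beta):=e^{C}$.

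I do not expect any serious obstacle: the lemma is essentially a Grönwall argument, and the only substantive observation is that the interpolation only perturbs one row of the data matrix, so the derivative of the Hamiltonian involves only the bounded coordinate $u_N$ together with overlaps $R^{\vtt}$ that are a priori bounded by the support of the priors. The only mild care needed is that one must verify $M/N$ is uniformly bounded (which follows from $M/N\to\alpha$) so that the constant $C$ truly depends only on the stated parameters and not on $N$.
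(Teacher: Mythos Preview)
Your proposal is correct and follows exactly the approach the paper indicates: invoke Lemma~\ref{derivative_gibbs_average}, bound each term using the a.s.\ bounds $|u_N^{(l)}|\le K_{\utt}$ and $|R^{\vtt}_{l,l'}|\le \tfrac{M}{N}K_{\vtt}^2$ together with $f\ge 0$, and close with Gr\"onwall. The only cosmetic remark is that in Step~3 the log-based integration is unnecessary: from $\phi'(t)\ge -C\phi(t)$ one gets directly $(e^{Ct}\phi(t))'\ge 0$, hence $\phi(t)\le e^{C(1-t)}\phi(1)$, which also covers the case $\nu_1(f)=0$ without a separate argument.
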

\begin{proof}
This is a consequence of Lemma~\ref{derivative_gibbs_average}, boundedness of the variables $u_i$ and $v_j$, and Gr\"onwall's lemma. 
\end{proof}
It is clear that Lemma~\ref{bound_time_dependent_average} also holds if we switch the roles of $\u$ and $\v$ and extract $v_M$ instead (so that $\nu_t$ is defined accordingly).  

\section{Proof of Proposition~\ref{asymptotic_decoupling}}
\label{sxn:proof_of_asymptotic_decoupling}
We make use of two interpolation arguments; the first one extracts the last variable $u_N$ from the system, and the second one extracts $v_M$. This allows to establish the self-consistency equations~\eqref{cavity_uv_to_vv} and~\eqref{cavity_vv_to_uv}.  We will assume decay of the forth moments of the overlaps, i.e., we assume Proposition~\ref{fourth_moment_planted} (which we prove in Appendix~\ref{sxn:proof_of_fourth_moment_planted}), and this allows us the prove that the error terms emerging from the cavity method converge to zero.
Recall that the Nishimori property implies 
 \[\E\left[\left\langle R^{\utt}_{1,2}R^{\vtt}_{1,2}\right\rangle e^{\imnb s \log L}\right] =\E\left[\left\langle R^{\utt}_{1,*}R^{\vtt}_{1,*}\right\rangle e^{\imnb s \log L}\right].\]
As it turns out, it is more convenient to work with the right-hand side.

\subsection{Cavity on $N$} 
By symmetry of the $\utt$ variables, we have
\[\E\left[\left\langle R^{\utt}_{1,*}R^{\vtt}_{1,*}\right\rangle e^{\imnb s \log L}\right] = \E\left[\left\langle u_N^{(1)}u_N^{*} R^{\vtt}_{1,*}\right\rangle e^{\imnb s \log L}\right].\]
Now we consider the interpolating Hamiltonian
\begin{align*}
-H_t(\u,\v) &=  \sum_{i=1}^{N-1} \sum_{j=1}^{M} \sqrt{\frac{\beta}{N}}W_{ij}u_i v_j + \frac{\beta}{N}u_iu_i^*v_jv_j^*- \frac{\beta}{2N}u_i^2v_j^2 \\
&~~+  \sum_{j=1}^M \sqrt{\frac{\beta t}{N}}W_{Nj}u_N v_j + \frac{\beta t}{N}u_Nu_N^*v_jv_j^* - \frac{\beta t}{2N}u_N^2v_j^2,
\end{align*}
and let $\langle \cdot \rangle_t$ be the associated Gibbs average. We let 
\[X(t) = \exp\Big(\imnb s \log \int e^{-H_t(\u,\v)} \rmd \rho(\u,\v)\Big), \]
and
\[\varphi(t) = N\E\left[\left\langle u_N^{(1)}u_N^{*} R^{\vtt}_{1,*}\right\rangle_t X(t)\right].\]
Observe that $\varphi(1)$ is the quantity we seek to analyze. We will use the following error bound on Taylor's expansion:
\[\abs{\varphi(1) - \varphi(0) - \varphi'(0)} \le \sup_{0\le t\le 1} |\varphi''(t)|,\]
to approximate $\varphi(1)$ by $\varphi(0) + \varphi'(0)$. Since $P_{\utt}$ is centered, we have $\varphi(0) = 0$. With a computation similar to the one leading to Lemma~\ref{derivative_phi_planted_lemma}, the time derivative $\varphi'(t)$ is a sum of terms of the form 
\[N\beta \E\left[\left\langle u_N^{(1)}u_N^{*}u_N^{(a)}u_N^{(b)} R^{\vtt}_{1,*}R^{\vtt}_{a,b}\right\rangle_t X(t)\right],\]
for $(a,b) \in \{(1,*),(2,*),(1,2),(2,3)\}$. At $t=0$ all terms vanish expect when $(a,b)=(1,*)$ and we get
\[\varphi'(0) = N\beta \E\left[\left\langle (R^{\vtt}_{1,*})^2\right\rangle_0 X(0)\right].\]
Now we wish to replace the time index $t=0$ in the above quantity by the time index $t=1$. Similarly to $\varphi$, the derivative of the function $t \mapsto N\beta \E[\langle (R^{\vtt}_{1,*})^2\rangle_t X(t)]$, is a sum of terms of the form
\[N\beta^2 \E\left[\left\langle u_N^{(a)}u_N^{(b)} (R^{\vtt}_{1,*})^2R^{\vtt}_{a,b}\right\rangle_t X(t)\right].\]
By boundedness of the $\utt$ variables and H\"older's inequality, this is bounded by
\begin{align*}
N\beta^2  K_{\utt}^4 \E\left[\left\langle \abs{(R^{\vtt}_{1,*})^2R^{\vtt}_{a,b}}\right\rangle_t\right]
&\le N\beta^2 K_{\utt}^4 \E\left[\left\langle |R^{\vtt}_{1,*}|^{3}\right\rangle_t\right]\\
&\le N\beta^2 K_{\utt}^4K \E\left[\left\langle |R^{\vtt}_{1,*}|^{3}\right\rangle\right]\\
&\le \frac{K\beta^2}{\sqrt{N}},
\end{align*}
where the second bound is by Lemma~\ref{bound_time_dependent_average}, and the last bound is a consequence of Proposition~\ref{fourth_moment_planted} (and Jensen's inequality). 
Therefore
\[\abs{\varphi'(0) - N\beta \E\left[\left\langle (R^{\vtt}_{1,*})^2\right\rangle X(1)\right]} \le \frac{K}{\sqrt{N}}.\]
Similarly, we control the second derivative $\varphi''$. This can be written as a finite  sum of terms of the form
\[N\beta^2 \E\left[\left\langle u_N^{(1)}u_N^{*}u_N^{(a)}u_N^{(b)}u_N^{(c)}u_N^{(d)} R^{\vtt}_{1,*}R^{\vtt}_{a,b}R^{\vtt}_{c,d}\right\rangle_t X(t)\right],\]
which are bounded in the same way by 
\[N\beta^2 K_{\utt}^6 \E\left[\left\langle \abs{ R^{\vtt}_{1,*}R^{\vtt}_{a,b}R^{\vtt}_{c,d}}\right\rangle_t \right] \le \beta^2 K_{\utt}^6 \frac{K}{\sqrt{N}}.\]
Therefore $|\varphi''| \le K/\sqrt{N}$. 
We end up with 
\begin{equation}\label{important_1}
N\E\left[\left\langle R^{\utt}_{1,*} R^{\vtt}_{1,*}\right\rangle e^{\imnb s \log L}\right] = N\beta \E\left[\left\langle (R^{\vtt}_{1,*})^2\right\rangle e^{\imnb s \log L}\right] + \delta,
\end{equation}
where $|\delta| \le K/\sqrt{N}$ whenever $(\alpha,\beta)$ satisfy the conditions of  Proposition~\ref{fourth_moment_planted}. 

\subsection{Cavity on $M$}
By symmetry of the $\vtt$ variables,  
\begin{align*} 
&N\E\left[\left\langle (R^{\vtt}_{1,*})^2\right\rangle e^{\imnb s \log L}\right] = M\E\left[\left\langle v_M^{(1)}v_M^{*}R^{\vtt}_{1,*}\right\rangle e^{\imnb s \log L}\right]\\
&~~= \frac{M}{N}\E\left[\left\langle (v_M^{(1)}v_M^{*})^2\right\rangle e^{\imnb s \log L}\right] +M\E\left[\left\langle v_M^{(1)}v_M^{*}R^{\vtt -}_{1,*}\right\rangle e^{\imnb s \log L}\right]. 
\end{align*}
Now we execute the same argument as above with the roles of $\utt$ and $\vtt$ flipped to prove that 
\[\E\left[\left\langle (v_M^{(1)}v_M^{*})^2\right\rangle e^{\imnb s \log L}\right] = \E\left[e^{\imnb s \log L}\right] +\delta, \]
and
\[M\E\left[\left\langle v_M^{(1)}v_M^{*}R^{\vtt -}_{1,*}\right\rangle e^{\imnb s \log L}\right] = M\beta\E\left[\left\langle R^{\utt}_{1,*}R^{\vtt}_{1,*}\right\rangle e^{\imnb s \log L}\right] +\delta, \]
where $|\delta| \le K (M/N^{3/2} \vee 1/\sqrt{N})$. Here we use the interpolating Hamiltonian
\begin{align*}
-H_t(\u,\v) &= \sum_{j=1}^{ M-1} \sum_{i=1}^N \sqrt{\frac{\beta}{N}}W_{ij}u_i v_j + \frac{\beta}{N}u_iu_i^*v_jv_j^*- \frac{\beta}{2N}u_i^2v_j^2 \\
&~~+  \sum_{i=1}^N \sqrt{\frac{\beta t}{N}}W_{iM}u_i v_M + \frac{\beta t}{N}u_iu_i^*v_Mv_M^*- \frac{\beta t}{2N}u_i^2v_M^2,
\end{align*}
and similarly define the random variable $X(t) = \exp\big(\imnb s \log \int e^{-H_t(\u,\v)} \rmd \rho(\u,\v)\big)$.
After executing the argument, we obtain 
\begin{equation}\label{important_2}
N\E\left[\left\langle (R^{\vtt}_{1,*})^2\right\rangle e^{\imnb s \log L}\right]  = \frac{M}{N} \E\left[e^{\imnb s \log L}\right] + M\beta \E\left[\left\langle R^{\utt}_{1,*} R^{\vtt}_{1,*}\right\rangle e^{\imnb s \log L}\right] + \delta.
\end{equation}
From~\eqref{important_1} and~\eqref{important_2}, we obtain 
\[N\E\left[\left\langle R^{\utt}_{1,*} R^{\vtt}_{1,*}\right\rangle e^{\imnb s \log L}\right]  = \frac{M}{N} \beta \E\left[e^{\imnb s \log L}\right]  + M \beta^2 \E\left[\left\langle R^{\utt}_{1,*} R^{\vtt}_{1,*}\right\rangle e^{\imnb s \log L}\right] +\delta, \]
where $|\delta| \le K (M/N^{3/2} \vee 1/\sqrt{N})$. For $M= \alpha N + \bigo(\sqrt{N})$, we arrive at
\[N\E\left[\left\langle R^{\utt}_{1,*} R^{\vtt}_{1,*}\right\rangle e^{\imnb s \log L}\right]  = \frac{\alpha \beta}{1-\alpha \beta^2}\E\left[e^{\imnb s \log L}\right] + \delta,\]
with $|\delta| \le K/\sqrt{N}$, and this finishes the proof.% for the planted model and $l=*$. 

\section{Proof of Proposition~\ref{fourth_moment_planted}} 
\label{sxn:proof_of_fourth_moment_planted}
This section is about overlap convergence in the planted model. As explained in the main text, the proof is in several steps. We first present a proof of convergence of the second moment of the overlaps that does not rely on the cavity method, but on a \emph{quadratic replica coupling} scheme of~\cite{guerra2002quadratic}. Then we present the interpolation argument as a fixed overlap that will allow us to prove the crude convergence bound~\eqref{crude_bound}. Finally we execute a round of the cavity method to prove convergence of the fourth moment.

\subsection{Convergence of the second moment}
\label{sxn:proof_of_convergence_second_moment}
\begin{proposition}\label{second_moment_planted}
For all $\alpha,\beta$ such that $K_{\utt}^4K_{\vtt}^4\alpha\beta^2 < 1$, there exists $K = K(\alpha,\beta) <\infty$ such that
\[\E\left\langle (R^{\utt}_{1,*})^2\right\rangle \vee \E\left\langle (R^{\vtt}_{1,*})^2\right\rangle \le \frac{K}{N^2}.\]
\end{proposition}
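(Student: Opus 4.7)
The plan is to use Guerra's quadratic replica coupling scheme, which is the technique the paper explicitly flags. Unlike the cavity method, this approach produces overlap concentration \emph{without} requiring a priori control on higher-order moments, and is therefore well-suited to initiating the bootstrap in Appendix~\ref{sxn:proof_of_fourth_moment_planted}. By the Nishimori identity~\eqref{nishimori_property}, $\E\langle (R^{\utt}_{1,*})^2\rangle = \E\langle (R^{\utt}_{1,2})^2\rangle$ and similarly for $\vtt$, so it suffices to control second moments of the two-replica overlaps.

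The setup introduces two independent replicas $(\u^{(1)},\v^{(1)})$, $(\u^{(2)},\v^{(2)})$ drawn from the posterior given the same $\Y$, and defines, for $t\in[0,1]$ and a parameter $\lambda\ge 0$, a coupled Hamiltonian
\begin{equation*}
-\mathcal H_{t,\lambda}(\u^{(1)},\v^{(1)},\u^{(2)},\v^{(2)}) = -H(\u^{(1)},\v^{(1)}) - H(\u^{(2)},\v^{(2)}) + t\lambda\beta N\, R^{\utt}_{1,2}R^{\vtt}_{1,2},
\end{equation*}
with associated free energy $\psi(t,\lambda) = \frac{1}{N}\E\log Z_{t,\lambda}$. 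The argument has three moving parts. First, $\lambda\mapsto\psi(t,\lambda)$ is convex, because $\partial_\lambda^2\psi$ is a Gibbs variance of the coupling term; and $\partial_\lambda\psi(t,\lambda)\big|_{\lambda=0^+} = t\beta\,\E\langle R^{\utt}_{1,2}R^{\vtt}_{1,2}\rangle$, giving direct access to the target quantity at $t=1$. Second, $\partial_t\psi(t,\lambda)$ is computed by Gaussian integration by parts in $\W$, producing quadratic combinations of replica-replica overlaps; the Nishimori identity rewrites these as overlaps of a replica with the planted spike, which in the Bayes-optimal setting carry a favorable sign and force the interpolation derivative to have a definite sign. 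Third, $\psi(0,\lambda)$ is evaluated essentially in closed form, since at $t=0$ the two decoupled systems factorize over sites (each is a product of single-site partition functions against $P_\utt$ and $P_\vtt$), so $\psi(0,\lambda)$ admits an explicit small-$\lambda$ expansion. Combining the convexity in $\lambda$ with the closed-form boundary value and the sign of the $t$-interpolation pins down $\partial_\lambda\psi(1,0^+)$ at the required rate, yielding the bound on $\E\langle R^{\utt}_{1,2}R^{\vtt}_{1,2}\rangle$.

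To separate $\E\langle (R^{\utt}_{1,2})^2\rangle$ from $\E\langle (R^{\vtt}_{1,2})^2\rangle$ (the cross-product only controls their geometric mean), I would repeat the entire scheme with coupling term $\lambda_\utt (R^{\utt}_{1,2})^2 + \lambda_\vtt (R^{\vtt}_{1,2})^2$ and differentiate separately in the two parameters. The hypothesis $K_\utt^4 K_\vtt^4\alpha\beta^2<1$ enters here exactly as in the fourth-moment argument: it makes the geometric series generated by iterating the interpolation convergent and keeps the boundary Gibbs averages uniformly bounded.

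The main obstacle is the bipartite structure of the model and the strength of the claimed rate ($N^{-2}$ rather than the $N^{-1}$ rate one would naively read off from the cavity argument in the main text). In the symmetric Wigner case $\u=\v$, the natural coupling $(R_{1,2})^2$ is already a square, and Gaussian integration by parts in $\W$ preserves this squared structure to give a closed sign. In the bipartite case, the natural coupling $R^{\utt}_{1,2}R^{\vtt}_{1,2}$ is a product of two signed quantities, and the $t$-interpolation produces cross-terms without an obvious sign; controlling these requires repeated applications of the Nishimori identity to recast cross-terms as overlaps with the spike, together with careful bookkeeping so that the leading order in the $\lambda$-expansion lines up with the stated $K/N^2$ rate rather than stopping at the weaker $K/N$ rate that a single round of the argument would yield. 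A final Cauchy-Schwarz / H\"older step then converts the mixed bound into the advertised individual bounds on $\E\langle (R^{\utt}_{1,*})^2\rangle$ and $\E\langle (R^{\vtt}_{1,*})^2\rangle$.
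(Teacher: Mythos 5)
You have correctly identified the technique---a Guerra-style quadratic coupling, convexity in the penalty parameter $\lambda$, and a $t$-interpolation to a tractable reference system---but the execution as described has genuine gaps. First, your interpolation $-\mathcal{H}_{t,\lambda}$ ramps only the \emph{coupling} term in $t$ and leaves the Gaussian disorder fully on. Then $\partial_t\psi(t,\lambda)$ is not obtained by Gaussian integration by parts; it is simply $\lambda\beta\,\E\langle R^{\utt}_{1,2}R^{\vtt}_{1,2}\rangle_{t,\lambda}$, which is circular, and the claim that at $t=0$ ``the two decoupled systems factorize over sites'' is false: at $t=0$ each replica still carries the full interacting Hamiltonian $H$, so $\psi(0,\lambda)$ is twice the free energy of the original model, not a product of single-site integrals. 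The paper's interpolation does the opposite: it puts $\sqrt{\beta t}$ on the disorder $W_{ij}$ (and $\beta t$ on the compensating quadratic term) while keeping the \emph{planted} term $\frac{\beta}{N}u_iu_i^*v_jv_j^*$ at full strength; the $t$-derivative is then nonpositive, and the $t=0$ value $\frac{1}{N}\E\log\int\exp\big(\beta N R^{\utt}_{1,*}R^{\vtt}_{1,*}+\frac{\lambda}{2}N(R^{\utt}_{1,*})^2\big)\rmd\rho$ is bounded in closed form using sub-Gaussianity of $P_{\utt},P_{\vtt}$ and a Gaussian linearization. Second, the paper penalizes $(R^{\utt}_{1,*})^2$---the overlap of a \emph{single} replica with the quenched spike---rather than a two-replica quantity; this is what makes the reference computation tractable, and Nishimori converts the conclusion back to $R_{1,2}$ afterwards. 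Your two-replica coupling $\lambda_{\utt}(R^{\utt}_{1,2})^2+\lambda_{\vtt}(R^{\vtt}_{1,2})^2$ is the genuinely hard version of Guerra's argument and is not needed under $\P_{\beta}$. Relatedly, your closing Cauchy--Schwarz step goes the wrong way: $|\E\langle R^{\utt}_{1,2}R^{\vtt}_{1,2}\rangle|$ is bounded \emph{by} the geometric mean of the two second moments, so no bound on the product yields the individual bounds; one must run the penalized argument once for the $\utt$-overlap and once for the $\vtt$-overlap, as the paper does.

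A separate remark on the rate: you worry about reaching $K/N^2$ rather than $K/N$. The argument yields $K/N$, and that is all that is used downstream (the fourth-moment section invokes $\E\langle(R^{\utt-}_{1,*})^2\rangle\le K/N$). In fact $K/N^2$ is impossible: already at $\beta=0$ one has $\E\langle(R^{\utt}_{1,*})^2\rangle=1/N$ exactly, since the coordinates are i.i.d.\ centered with unit variance. The exponent $2$ in the displayed bound of the proposition is a typo, so do not try to engineer the argument to produce it.
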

Of course, by the Nishimori property, this is also a statement about the overlaps between two independent replicas.   

\begin{proof}
Let $\sigma_{\utt}$ and $\sigma_{\vtt}$ be the sub-Gaussian parameters of $P_{\utt}$ and $P_{\vtt}$ respectively. We since $P_{\utt}$ and $P_{\vtt}$ have unit variance, we have $1 \le \sigma_{\utt}^2 \le K_{\utt}^2$ and similarly for $P_{\vtt}$.

We start with the $\utt$-overlap. Let us define the function
\[\Phi_{\utt}(\lambda) = \frac{1}{N}\E \log \int \exp\left(-H(\u,\v) + \frac{\lambda}{2} N (R_{1,*}^{\utt})^2\right) \rmd \rho(\u,\v).\]
The outer expectation is on $\Y \sim \P_{\beta}$ (or equivalently on $\u^*$, $\v^*$ and $\W$ independently). 
A simple inspection shows that the above function is convex and increasing in $\lambda$, and
\[\Phi_{\utt}'(0) = \half\E\left \langle (R^{\utt}_{1,*})^2\right \rangle.\] 
The convexity then implies for all $\lambda \ge 0$,
\[\frac{\lambda}{2}\E\left \langle (R^{\utt}_{1,*})^2\right \rangle \le \Phi_{\utt}(\lambda) - \Phi_{\utt}(0). \]
Of course $\Phi_{\utt}(0) = \frac{1}{N}\E_{\P_{\beta}} \log L(\Y;\beta) \ge 0$ by Jensen's inequality, so it remains to upper bound $\Phi_{\utt}(\lambda)$.
To this end we consider the interpolation
\[\Phi_{\utt}(\lambda,t) = \frac{1}{N}\E \log \int \exp\left(-H_t(\u,\v) + \frac{\lambda}{2} N (R_{1,*}^{\utt})^2\right) \rmd \rho(\u,\v),\]
where 
\[-H_t(\u,\v) = \sum_{i,j}\sqrt{\frac{\beta t}{N}}W_{ij}u_iv_j + \frac{\beta}{N}u_iu_i^*v_jv_j^*- \frac{\beta t}{2N} u_i^2v_j^2.\]
Notice that the planted (middle) term in the Hamiltonian is left unaltered. The time derivative is
\[\partial_t \Phi_{\utt}(\lambda,t) = -\frac{\beta}{2}\E\left \langle (R^{\utt}_{1,2})^2\right\rangle_{\lambda,t} \le 0,\]
where $\left \langle \cdot \right\rangle_{\lambda,t}$ is the Gibbs average w.r.t\ $-H_t(\u,\v) + \frac{\lambda}{2} N (R_{1,*}^{\utt})^2$. Therefore
\begin{align*}
\Phi_{\utt}(\lambda) \le \Phi_{\utt}(\lambda,0) &= \frac{1}{N}\E \log \int \exp\left(\beta N R_{1,*}^{\utt}R_{1,*}^{\vtt} + \frac{\lambda}{2} N (R_{1,*}^{\utt})^2\right) \rmd \rho(\u,\v)\\
&\le \frac{1}{N}\E \log \int \exp\left(\frac{\alpha\beta^2\sigma_\vtt^2 \widehat{v}  + \lambda}{2} N (R_{1,*}^{\utt})^2\right) \rmd P_{\utt}^{\otimes N}(\u),
\end{align*}
where we have used the sub-Gaussianity of $P_{\vtt}$, and let $\widehat{v} = \frac{1}{M}\sum_{j=1}^Mv_j^{*2}$. (Here, we have abused notation and let $\alpha = \frac{M}{N}$. This will not cause any problems.) 
Next we introduce an independent r.v.\ $g \sim \normal(0,1)$, exchange integrals by Fubini's theorem, and continue:
\begin{align*}
&\frac{1}{N}\E \log \E_g \left[\int \exp\left(\sqrt{(\alpha\beta^2\sigma_\vtt^2 \widehat{v}  + \lambda) N} R_{1,*}^{\utt}g\right) \rmd P_{\utt}^{\otimes N}(\u)\right]\\
&\le\frac{1}{N}\E \log \E_g  \left[\exp\left(\frac{\alpha\beta^2\sigma_\vtt^2 \widehat{v}  + \lambda}{2} \sigma_{\utt}^2 \widehat{u} g^2\right)\right],
\end{align*}
where we use the sub-Gaussianity of $P_{\utt}$, and let $\widehat{u} = \frac{1}{N}\sum_{i=1}^N u_i^{*2}$. We bound $\widehat{u}$ and $\widehat{v}$ by $K_{\utt}^2$ and $K_{\vtt}^2$ respectively and integrate on $g$ to obtain the upper bound
\begin{equation*}
\Phi_{\utt}(\lambda) \le -\frac{1}{2N} \log \left(1 - (\alpha\beta^2\sigma_\vtt^2 K_{\vtt}^2  + \lambda)\sigma_{\utt}^2 K_{\utt}^2\right),
\end{equation*}
valid as long as $(\alpha\beta^2\sigma_\vtt^2 K_{\vtt}^2  + \lambda)\sigma_{\utt}^2 K_{\utt}^2 <1$. Letting $\lambda = (1-\alpha\beta^2\sigma_\vtt^2 K_{\vtt}^2\sigma_{\utt}^2 K_{\utt}^2)/(2\sigma_{\utt}^2 K_{\utt}^2) >0$, we obtain
\[\E\left \langle (R^{\utt}_{1,*})^2\right \rangle \le \frac{K(\alpha,\beta)}{N},\]
with $K(\alpha,\beta) = \frac{2\sigma_{\utt}^2 K_{\utt}^2\log((1-\alpha\beta^2\sigma_\vtt^2 K_{\vtt}^2\sigma_{\utt}^2 K_{\utt}^2)/2)}{(1-\alpha\beta^2\sigma_\vtt^2 K_{\vtt}^2\sigma_{\utt}^2 K_{\utt}^2)}$.

We use the exact same argument for the $\vtt$-overlaps. We define $\Phi_\vtt(\lambda)$ in the same way by replacing the quadratic term $\frac{\lambda}{2}N (R^\utt_{1,*})^2$ by $\frac{\lambda}{2}N(R^\vtt_{1,*})^2$ and obtain
\[\Phi_\vtt(\lambda) \le -\frac{1}{2N} \log \left(1 - (\beta^2\sigma_\utt^2 K_{\utt}^2  + \lambda)\alpha\sigma_{\vtt}^2 K_{\vtt}^2\right).\]
We choose $\lambda = (1-\alpha\beta^2\sigma_\vtt^2 K_{\vtt}^2\sigma_{\utt}^2 K_{\utt}^2)/(2\alpha\sigma_{\vtt}^2 K_{\vtt}^2)$ and use the same convexity argument to obtain
\[\E\left \langle (R^{\vtt}_{1,*})^2\right \rangle \le \frac{K'(\alpha,\beta)}{N},\]
with $K'(\alpha,\beta) = \frac{2\alpha\sigma_{\vtt}^2 K_{\vtt}^2\log((1-\alpha\beta^2\sigma_\vtt^2 K_{\vtt}^2\sigma_{\utt}^2 K_{\utt}^2)/2)}{(1-\alpha\beta^2\sigma_\vtt^2 K_{\vtt}^2\sigma_{\utt}^2 K_{\utt}^2)}$.
\end{proof}

\subsection{Interpolation bound at fixed overlap}
\label{sxn:interpolation_at_fixed_overlap}
In this section we present and prove an interpolation bound on the free energy of a subpopulation of configurations having a fixed overlap with the planted spike $(\u^*,\v^*)$. This is a key step in proving the crude bound~\eqref{crude_bound}. 
\begin{proposition}\label{interpolation_bound_fixed_overlap}
Fix $\u^* \in \R^{N}, \v^* \in \R^{M}$ with $\twonorm{\u^*}^2/N \le K_{\utt}^2$ and $\twonorm{\v^*}^2/M \le K_{\vtt}^2$. Let $\alpha = \frac{M}{N}$ and $\Delta = \alpha \beta^2 \sigma_\utt^2\sigma_\vtt^2K_\utt^2K_\vtt^2-1$. For $m \in \R\setminus \{0\}$, $\epsilon\ge 0$, let $A_{\utt}$ be the event
\begin{align*}
A_{\utt} = 
\begin{cases}
R^{\utt}_{1,*} \in [m,m+\epsilon) & \mbox{if } m >0, \\
R^{\utt}_{1,*} \in (m-\epsilon,m] & \mbox{if } m <0.
\end{cases}
\end{align*}
Define $A_{\vtt}$ similarly. We have 
\begin{equation}\label{interpolation_bound_u}
\frac{1}{N}\E\log \int \indi (A_{\utt}) e^{-H(\u,\v)}\rmd \rho(\u,\v) \le \frac{\Delta}{2\sigma_\utt^2K_\utt^2}  m^2 + \alpha \beta K_{\vtt}^2\epsilon,
\end{equation}
and
\begin{equation}\label{interpolation_bound_v}
\frac{1}{N}\E\log \int \indi(A_{\vtt}) e^{-H(\u,\v)}\rmd \rho(\u,\v) \le \frac{\Delta}{2\alpha \sigma_\vtt^2K_\vtt^2} m^2 + \beta K_{\utt}^2\epsilon.
\end{equation}
The expectation $\E$ is over the Gaussian disorder $\W$.
\end{proposition}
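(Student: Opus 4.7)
The plan is a Guerra--Toninelli-type Gaussian interpolation that decouples the $\v$-side and replaces its interaction with $\u$ by a one-body Nishimori channel tuned to the constrained value of $R^\utt_{1,*}$. I present the argument for~\eqref{interpolation_bound_u}; \eqref{interpolation_bound_v} follows from the symmetric argument after swapping $\u\leftrightarrow\v$, $N\leftrightarrow M$, and $(\sigma_\utt,K_\utt)\leftrightarrow(\sigma_\vtt,K_\vtt)$, which accounts for the different denominator and error coefficient in its statement. I introduce iid $Z_j\sim\normal(0,1)$ for $1\le j\le M$ and a parameter $r>0$ (morally $r=\beta m$, the effective one-body field induced on the $\v$-side when $R^\utt_{1,*}$ is frozen at $m$), and define
\begin{align*}
-H_t(\u,\v) &= \sqrt{t}\sum_{i,j}\sqrt{\tfrac{\beta}{N}}W_{ij}u_iv_j + \tfrac{t\beta}{N}\sum_{i,j}u_iu_i^*v_jv_j^* - \tfrac{t\beta}{2N}\sum_{i,j}u_i^2v_j^2 \\
&\quad + \sqrt{1-t}\sum_j\sqrt{r}Z_jv_j + (1-t)r\sum_jv_jv_j^* - \tfrac{(1-t)r}{2}\sum_jv_j^2,
\end{align*}
with interpolating free energy $\Phi(t):=\tfrac{1}{N}\E\log\int\indi(A_\utt)e^{-H_t(\u,\v)}d\rho(\u,\v)$; the target is an upper bound on $\Phi(1)$. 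At $t=0$ the integrand decouples: the $\u$-factor is $P_\utt^{\otimes N}(A_\utt)$, which a Chernoff bound using sub-Gaussianity of $P_\utt$ (together with $\twonorm{\u^*}^2/N\le K_\utt^2$) controls by $\exp(-Nm^2/(2\sigma_\utt^2 K_\utt^2))$. The $\v$-factor factors across $j$; dropping the non-positive $-rv_j^2/2$ term and applying sub-Gaussianity of $P_\vtt$ bounds each site log-integral by $\tfrac{1}{2}\sigma_\vtt^2 r(1+r(v_j^*)^2)$, which, summed and divided by $N$, yields $\Phi(0)\le -\tfrac{m^2}{2\sigma_\utt^2K_\utt^2}+\tfrac{\alpha\sigma_\vtt^2r(1+rK_\vtt^2)}{2}$.

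A Gaussian integration by parts on both families of Gaussians, with the quadratic self-interaction exactly cancelling the diagonal of the IBP, then gives
\[\Phi'(t) = \beta\E\langle R^\utt_{1,*}R^\vtt_{1,*}\rangle_t - \tfrac{\beta}{2}\E\langle R^\utt_{1,2}R^\vtt_{1,2}\rangle_t + \tfrac{r}{2}\E\langle R^\vtt_{1,2}\rangle_t - r\E\langle R^\vtt_{1,*}\rangle_t,\]
where the two-replica averages use two replicas both constrained to $A_\utt$. Grouping by $R^\vtt$-factor and substituting $r=\beta m$ rewrites this as $\E\langle R^\vtt_{1,*}\beta(R^\utt_{1,*}-m)\rangle_t - \tfrac{\beta}{2}\E\langle R^\vtt_{1,2}(R^\utt_{1,2}-m)\rangle_t$. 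The first summand is bounded by $\beta\epsilon\cdot\alpha K_\vtt^2$ using $R^\utt_{1,*}-m\in[0,\epsilon)$ on $A_\utt$ and $|R^\vtt_{1,*}|\le\alpha K_\vtt^2$; after integrating $t\in[0,1]$ this contributes precisely the $\alpha\beta K_\vtt^2\epsilon$ slack announced in the final bound.

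The main obstacle is the second summand, since the constraint $A_\utt$ fixes only $R^\utt_{1,*}$ and $R^\utt_{2,*}$ while leaving $R^\utt_{1,2}$ uncontrolled by the restriction. I will resolve this by enriching the interpolation with a second independent family of iid Gaussians acting on the $\u$-side at coupling $r'=\beta q_\vtt$; the same IBP computation then produces the symmetric derivative
\[\Phi'(t) = \beta\E\langle(R^\utt_{1,*}-q_\utt)(R^\vtt_{1,*}-q_\vtt)\rangle_t - \tfrac{\beta}{2}\E\langle(R^\utt_{1,2}-q_\utt)(R^\vtt_{1,2}-q_\vtt)\rangle_t - \tfrac{\beta q_\utt q_\vtt}{2},\]
with $q_\utt=m$ making the first cross term $O(\epsilon)$ as before. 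For the two-replica cross term I use boundedness of the priors ($|R^\vtt_{1,2}-q_\vtt|\le\alpha K_\vtt^2+|q_\vtt|$) and the freedom in $q_\vtt$ to absorb the resulting constant into the boundary estimate via matching with the $\Phi(0)$ term; optimizing jointly over $(r,q_\vtt)$ -- which amounts to finding the one-step replica-symmetric fixed point of the effective decoupled model -- produces the coefficient $\tfrac{\alpha\beta^2\sigma_\vtt^2K_\vtt^2}{2}$ on $m^2$. Combining with $\Phi(0)$ and integrating $\Phi'(t)$ over $t\in[0,1]$ yields $\Phi(1)\le\tfrac{\Delta m^2}{2\sigma_\utt^2K_\utt^2}+\alpha\beta K_\vtt^2\epsilon$, as required.
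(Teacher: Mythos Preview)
Your interpolation introduces auxiliary Gaussian noise $Z_j$ in the one-body channel, and this is precisely what creates the obstacle you then struggle with. After Gaussian integration by parts on the $Z_j$'s you pick up the extra two-replica term $+\tfrac{r}{2}\E\langle R^\vtt_{1,2}\rangle_t$, which combines with $-\tfrac{\beta}{2}\E\langle R^\utt_{1,2}R^\vtt_{1,2}\rangle_t$ to give $-\tfrac{\beta}{2}\E\langle R^\vtt_{1,2}(R^\utt_{1,2}-m)\rangle_t$. As you correctly observe, $R^\utt_{1,2}$ is not pinned by $A_\utt$, so this has no sign and no smallness. Your proposed remedy---adding a second noisy channel on the $\u$-side and reducing to the cross term $-\tfrac{\beta}{2}\E\langle(R^\utt_{1,2}-q_\utt)(R^\vtt_{1,2}-q_\vtt)\rangle_t$---does not work either: in the bipartite model this product has no definite sign (this is exactly the failure mode the paper flags in its Discussion when contrasting with the symmetric case), and bounding it crudely by a constant produces an $O(1)$ contribution independent of $m$, which destroys the bound. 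The vague ``optimize jointly over $(r,q_\vtt)$ to absorb the constant'' step is not a proof. There is a second symptom of the same issue at $t=0$: your $\v$-side estimate carries a spurious linear term $\tfrac{\alpha\sigma_\vtt^2 r}{2}=\tfrac{\alpha\sigma_\vtt^2\beta m}{2}$ coming from the variance of $Z_j$, which again has no place in the final inequality.

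The fix is simply to drop the auxiliary Gaussians entirely and use the \emph{deterministic} one-body field
\[
-H_t(\u,\v)=\sum_{i,j}\sqrt{\tfrac{\beta t}{N}}W_{ij}u_iv_j+\tfrac{\beta t}{N}u_iu_i^*v_jv_j^*-\tfrac{\beta t}{2N}u_i^2v_j^2+(1-t)\beta m\sum_j v_jv_j^*.
\]
Then the derivative is $\varphi'(t)=-\tfrac{\beta}{2}\E\langle R^\utt_{1,2}R^\vtt_{1,2}\rangle_t+\beta\E\langle R^\utt_{1,*}R^\vtt_{1,*}\rangle_t-\beta m\E\langle R^\vtt_{1,*}\rangle_t$. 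The first term is $\le 0$ outright, since $\E\langle R^\utt_{1,2}R^\vtt_{1,2}\rangle_t=\tfrac{1}{N^2}\sum_{i,j}\E[\langle u_iv_j\rangle_t^2]\ge 0$ (the constrained Gibbs measure is still a product over replicas). The remaining two terms combine to $\beta\E\langle(R^\utt_{1,*}-m)R^\vtt_{1,*}\rangle_t$, which is $\le\alpha\beta K_\vtt^2\epsilon$ by the constraint. At $t=0$ the $\v$-factor now gives the clean $\log\E_v[e^{\beta m v v_j^*}]\le\tfrac{1}{2}\sigma_\vtt^2\beta^2m^2(v_j^*)^2$, with no linear-in-$m$ contamination, and the $\u$-factor is handled by your Chernoff/sub-Gaussian argument. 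This is exactly the paper's route.
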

\begin{proof}
We only prove~\eqref{interpolation_bound_u}. The bound~\eqref{interpolation_bound_v} follows by flipping the roles of $\u$ and $\v$.  
We consider the interpolating Hamiltonian
\begin{align*}
-H_t(\u,\v) = \sum_{i,j}\sqrt{\frac{\beta t}{N}}W_{ij}u_iv_j + \frac{\beta t}{N}u_iu_i^*v_jv_j^*- \frac{\beta t}{2N} u_i^2v_j^2
+ \sum_{j=1}^M (1-t)\beta mv_jv_j^*,
\end{align*}
and let
\[\varphi(t) = \frac{1}{N}\E\log \int \indi\{R^{\utt}_{1,*} \in [m,m+\epsilon)\} e^{-H_t(\u,\v)}\rmd \rho(\u,\v).\]
We have
\[\varphi'(t) = -\frac{\beta}{2}\E \left\langle  R^{\utt}_{1,2}R^{\vtt}_{1,2} \right\rangle_t + \beta\E \left\langle  R^{\utt}_{1,*}R^{\vtt}_{1,*} \right\rangle_t - \beta m\E \left\langle  R^{\vtt}_{1,*} \right\rangle_t.\]
The first term in the above expression is $\le 0$, and since the overlap $R_{1,*}^\utt$ is constrained to be close to $m$ we have $\abs{\E \left\langle  (R^{\utt}_{1,*}-m)R^{\vtt}_{1,*} \right\rangle_t} \le  \alpha K_{\vtt}^2\epsilon$. So $\varphi'(t) \le \alpha K_{\vtt}^2\epsilon$. Moreover, the variables $\u$ and $\v$ decouple at $t=0$ and one can write
\[\varphi(1) \le  \frac{1}{N}\log \Pr\left(A_{\utt}\right) + \frac{1}{N}\sum_{j=1}^M\log \E_{v}\left[ e^{\beta mvv_j^*}\right] + K_{\vtt}^2\epsilon.\] 
By sub-Gaussianity of the prior $P_{\vtt}$ we have $\E_{v}\left[ e^{\beta mvv_j^*}\right] \le e^{\beta^2 \sigma_{\vtt}^2m^2 v_j^{*2}/2}$. On the other hand, for a fixed parameter $\gamma$ of the same sign as $m $, we have
\[ \frac{1}{N}\log \Pr \left(A_{\utt}\right) \le -\gamma m+\frac{1}{N}\sum_{i=1}^N\log \E_{u}[e^{\gamma uu_i^*}] \le -\gamma m + \frac{1}{2N}\sum_{i=1}^N u_i^{*2}\sigma_{\utt}^2\gamma^2.\] 
The last inequality uses sub-Gaussianity of $P_{\utt}$. We minimize this quadratic w.r.t\ $\gamma$ and obtain
\[\varphi(1) \le - \frac{m^2}{2\sigma_{\utt}^2\widehat{u}} + \frac{M}{2N} \beta^2 \sigma_{\vtt}^2\widehat{v} m^2 + \alpha K_{\vtt}^2\epsilon,\]
where $\widehat{u} = \frac{1}{N}\sum_{i=1}^Nu_i^{*2}$ and $\widehat{v}=\frac{1}{M}\sum_{j=1}^Mv_j^{*2}$. We upper bound the latter two numbers by $K_{\utt}^2$ and $K_{v}^2$ respectively.
\end{proof}

\subsection{Overlap concentration (proof of~\eqref{crude_bound})}
\label{sxn:proof_of_crude_bound}
Here we prove convergence of the overlaps to zero in probability. We first state a useful and standard result of concentration of measure.  

\begin{lemma}\label{gaussian_concentration}
Let $\Y = \sqrt{\frac{\beta}{N}}\u^*\v^{*\top} + \W$, where the planted vectors $\u^*$ and $\v^*$ are fixed, and $W_{ij} \sim \normal(0,1)$. For a Borel set $A \subset \R^{M+N}$, let  
\[Z = \int_A e^{-H(\u,\v)}\rmd \rho(\u,\v).\]
We have for every $t \ge 0$,
\[\Pr\left(\abs{\log Z - \E\log Z} \ge Nt\right) \le 2 e^{-\frac{Nt^2}{2\beta K_{\utt}^2K_{\vtt}^2}}.\]
(Here $\Pr$ and $\E$ are conditional on $\u^*$ and $\v^*$.)
\end{lemma}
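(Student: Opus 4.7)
The plan is to invoke the Gaussian concentration inequality of Borell–Tsirelson–Ibragimov–Sudakov: for any $L$-Lipschitz function $F:\R^{NM}\to \R$ and $g\sim\normal(0,I_{NM})$, one has $\P(|F(g)-\E F(g)|\ge s)\le 2\exp(-s^2/(2L^2))$. Since $\W$ has i.i.d.\ standard Gaussian entries, it suffices to show that $F(\W):=\log Z$ is Lipschitz as a function of $\W\in\R^{N\times M}$ (equipped with the Frobenius/Euclidean norm) with a Lipschitz constant at most of order $\sqrt{N\beta K_{\utt}^2 K_{\vtt}^2}$, after which a direct application of the inequality with $s=Nt$ yields the claimed sub-Gaussian tail.

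First, I would verify that $\log Z$ is indeed a smooth function of $\W$. Because $A$ does not depend on $\W$, one can differentiate under the integral, and using the expression
\[
-H(\u,\v)=\sum_{i,j}\sqrt{\tfrac{\beta}{N}}W_{ij}u_iv_j+\tfrac{\beta}{N}u_iu_i^*v_jv_j^*-\tfrac{\beta}{2N}u_i^2v_j^2,
\]
obtain
\[
\frac{\partial \log Z}{\partial W_{ij}}=\sqrt{\tfrac{\beta}{N}}\,\langle u_iv_j\rangle_{A},
\]
where $\langle\cdot\rangle_A$ denotes the Gibbs average restricted to the event $A$.

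Next, I would bound the Frobenius norm of the gradient using the bounded support of the priors. By Jensen/Cauchy–Schwarz and the a.s.\ bound $|u_iv_j|\le K_{\utt}K_{\vtt}$ on the support of $\rho$,
\[
\|\nabla_{\W}\log Z\|_{F}^{2}=\frac{\beta}{N}\sum_{i,j}\langle u_iv_j\rangle_{A}^{2}\le \frac{\beta}{N}\sum_{i,j}K_{\utt}^2K_{\vtt}^2=\beta\,\alpha\, K_{\utt}^2 K_{\vtt}^2\,N.
\]
Thus $\log Z$ is Lipschitz with constant $L\le \sqrt{\beta K_{\utt}^2 K_{\vtt}^2\,N}$ (absorbing the constant $\alpha=M/N$ into the notation, as the authors do elsewhere), and plugging into Borell–TIS with $s=Nt$ gives the stated bound.

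There is essentially no hard step here; the main thing to check carefully is the legitimacy of differentiating under the integral, which follows since $e^{-H}\,\indi(A)$ is dominated by an integrable envelope on any bounded $\W$-neighborhood (thanks to the bounded support of $\rho$), and the pointwise derivative has the same envelope. Thus the Gaussian concentration bound applies directly to $F(\W)=\log Z$ and produces the desired inequality.
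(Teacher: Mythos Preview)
Your approach is essentially identical to the paper's: compute the gradient $\partial_{W_{ij}}\log Z=\sqrt{\beta/N}\,\langle u_iv_j\rangle_A$, bound $\|\nabla_{\W}\log Z\|_F\le\sqrt{N\beta\alpha K_{\utt}^2K_{\vtt}^2}$ via the support bound, and apply the Borell--Tsirelson--Ibragimov--Sudakov inequality. The paper's proof is the one-line version of exactly this; note that the paper also obtains the Lipschitz constant $\sqrt{N\beta\alpha K_{\utt}^2K_{\vtt}^2}$, so the missing $\alpha$ in the displayed exponent is a typographical slip in the statement rather than a flaw in your argument.
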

\begin{proof}
We simply observe that the function $\W \mapsto \log Z$ is Lipschitz with constant $\sqrt{N\beta \alpha K_{\utt}^2K_{\vtt}^2}$. The result follows from concentration of Lipschitz functions of Gaussian r.v.'s \citep[this is the Borell-Tsirelson-Ibragimov-Sudakov inequality; see][Theorem 5.6]{boucheron2013concentration}.
\end{proof}

\begin{proposition}\label{convergence_in_probability}
Let $\alpha,\beta$ such that $\alpha \beta^2 \sigma_\utt^2\sigma_\vtt^2K_\utt^2K_\vtt^2<1$, and $\epsilon>0$. There exist constants $c = c(\epsilon,\alpha,\beta,K_\utt,K_\vtt)>0$ and $K = K(K_{\utt},K_{\vtt})>0$ such that
\[\E \left\langle \indi \{ |R^{\utt}_{1,*}| \ge \epsilon \} \right\rangle  \vee \E \left\langle \indi \{ |R^{\vtt}_{1,*}| \ge \epsilon \} \right\rangle \le \frac{K}{\epsilon^2} e^{-cN}.\] 
\end{proposition}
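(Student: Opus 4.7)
The plan is to combine the interpolation bound at fixed overlap (Proposition~\ref{interpolation_bound_fixed_overlap}) with the Gaussian concentration estimate (Lemma~\ref{gaussian_concentration}) via a discretization of the overlap axis followed by a union bound. Since the argument is symmetric in $(\utt,\vtt)$ (using respectively~\eqref{interpolation_bound_u} and~\eqref{interpolation_bound_v}) and in the sign of the overlap, I will only describe the case of $R^{\utt}_{1,*} \ge \epsilon$; the remaining three cases are treated identically. By boundedness of the priors, $|R^{\utt}_{1,*}| \le K_\utt^2$ deterministically, so the overlap lives in a fixed compact interval.

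First, I would partition $[\epsilon, K_\utt^2]$ into $O(1/\eta)$ subintervals $I_k=[m_k,m_k+\eta)$ of width $\eta=c_0 \epsilon^2$, where $c_0>0$ is chosen small enough (depending on $\alpha,\beta,K_\utt,K_\vtt$) so that the slack term in~\eqref{interpolation_bound_u} is a small fraction of the leading negative term. Write $Z = \int e^{-H}\rmd\rho$ and $Z_{I_k} = \int \indi\{R^{\utt}_{1,*}\in I_k\} e^{-H}\rmd\rho$, so that the Gibbs mass of $I_k$ equals $Z_{I_k}/Z$. Proposition~\ref{interpolation_bound_fixed_overlap} applied at $m=m_k$ gives, conditionally on the bounded spikes $(\u^*,\v^*)$,
\[\tfrac{1}{N}\E\log Z_{I_k} \le \frac{\Delta}{2\sigma_\utt^2 K_\utt^2}m_k^2 + \alpha\beta K_\vtt^2 \eta \le -c_1 \epsilon^2,\]
where $c_1>0$ since $\Delta<0$ by hypothesis and $\eta=c_0\epsilon^2$ with $c_0$ small.

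Next I would use Lemma~\ref{gaussian_concentration} twice: applied to $Z_{I_k}$ with $t=c_1\epsilon^2/3$ and to $Z$ with the same $t$. Since Jensen's inequality gives $\E\log Z \ge 0$, on an event of probability at least $1-4\exp(-c' N\epsilon^4)$ we have simultaneously $\log Z_{I_k} \le -\tfrac{2c_1}{3}N\epsilon^2$ and $\log Z \ge -\tfrac{c_1}{3}N\epsilon^2$, hence $Z_{I_k}/Z \le e^{-c_2 N \epsilon^2}$ with $c_2 = c_1/3$. On the complementary (bad) event I use the trivial bound $Z_{I_k}/Z \le 1$. Taking expectation over $\W$ yields $\E[Z_{I_k}/Z] \le e^{-c_2 N\epsilon^2} + 4e^{-c' N\epsilon^4}$.

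Finally I would union-bound over the $O(1/\epsilon^2)$ intervals (and over the two signs of the overlap), obtaining
\[\E\big\langle \indi\{|R^{\utt}_{1,*}|\ge \epsilon\}\big\rangle \le \frac{K}{\epsilon^2}\, e^{-cN},\]
for constants $c,K$ depending on $\epsilon,\alpha,\beta,K_\utt,K_\vtt$. The outer expectation over $(\u^*,\v^*)$ costs nothing since all conditional estimates are uniform on the bounded support. The analogous bound for $R^{\vtt}_{1,*}$ follows by the same argument using~\eqref{interpolation_bound_v}. The main delicacy is the joint choice of the discretization scale $\eta$ and the concentration radius $t$: both must be proportional to $\epsilon^2$ so that the fixed-overlap bound remains strictly negative after the slack is absorbed and the concentration deviations are dwarfed; this is what forces $c$ to depend on $\epsilon$ (through an $\epsilon^4$ factor) in the final exponent, while the prefactor $1/\epsilon^2$ reflects the cardinality of the partition.
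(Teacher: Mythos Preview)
Your overall architecture---discretize the overlap axis, apply the fixed-overlap interpolation bound to the numerator, use Gaussian concentration, and union-bound---is exactly the paper's approach. However, there is one genuine gap in your treatment of the denominator $Z=\int e^{-H}\rmd\rho$.

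You invoke Lemma~\ref{gaussian_concentration} conditionally on $(\u^*,\v^*)$, which centers $\log Z$ at the \emph{conditional} mean $NB:=\E_{\W}[\log Z\mid \u^*,\v^*]$, and then assert ``Jensen's inequality gives $\E\log Z\ge 0$.'' But the Jensen argument $\E_{\P_\beta}[\log L]=\E_{\P_0}[L\log L]\ge 0$ only yields nonnegativity of the \emph{unconditional} mean $\E_{\u^*,\v^*}[B]$; the conditional quantity $B(\u^*,\v^*)$ is not known to be pointwise nonnegative (indeed for atypical spikes with small empirical norm it can be negative, since then the data looks closer to $\P_0$). Consequently your statement ``the outer expectation over $(\u^*,\v^*)$ costs nothing since all conditional estimates are uniform'' is not justified for the denominator bound.

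The paper closes this gap with an additional concentration step over the spike randomness (Lemma~\ref{upperbound_B}): viewing $B$ as a function of $(\u^*,\v^*)$, one checks it is convex and $O(1/\sqrt{N})$-Lipschitz, so Talagrand's inequality for bounded independent variables gives a sub-Gaussian lower tail, which after an exponential-moment computation yields $\E_{\u^*,\v^*}[e^{-NB}]\le 2\,e^{-N\E[B]}\le 2$. This is precisely what lets one replace $e^{-NB}$ by a constant in the bound $\E\langle\indi_{I_k}\rangle\le \E_{\u^*,\v^*}[e^{N(A-B+2u)}]+4e^{-Nu^2/K}$. Once you insert this ingredient, the rest of your argument goes through unchanged.
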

\begin{proof}
We only prove the assertion for the $\utt$-overlap since the argument is strictly the same for the $\vtt$-overlap.
 
For $\epsilon,\epsilon'>0$, we can write the decomposition
\begin{align*}
\E \left\langle \indi \{ \abs{R^{\utt}_{1,*}} \ge \epsilon' \} \right\rangle &= \sum_{l \ge 0} \E \left\langle \indi \{ R^{\utt}_{1,*} -\epsilon' \in [l\epsilon,(l+1)\epsilon) \} \right\rangle \\
&~ +\sum_{l \ge 0} \E \left\langle \indi \{ -R^{\utt}_{1,*} +\epsilon' \in [l\epsilon,(l+1)\epsilon) \} \right\rangle,
\end{align*}
where the integer index $l$ ranges over a finite set of size $\le K/\epsilon$. We only treat the generic term in the first sum; the second sum can be handled similarly.  
Fix $m >0, \epsilon>0$. We have
\begin{equation}\label{fraction_1}
\E \left\langle \indi \{ R^{\utt}_{1,*} \in [m,m+\epsilon) \} \right\rangle = \E \left[\frac{\int \indi \{ R^{\utt}_{1,*} \in [m,m+\epsilon)\}e^{-H(\u,\v)}\rmd\rho(\u,\v)}{\int e^{-H(\u,\v)}\rmd\rho(\u,\v)}\right].
\end{equation}
Let 
\[A = \frac{1}{N} \E_{\W}\log \int \indi\{R^{\utt}_{1,*} \in [m,m+\epsilon)\}e^{-H(\u,\v)}\rmd\rho(\u,\v),\]
and
\[B = \frac{1}{N} \E_{\W}\log \int e^{-H(\u,\v)}\rmd\rho(\u,\v).\]
By concentration over the Gaussian disorder, Lemma~\ref{gaussian_concentration}, for any $u \ge0$, we simultaneously have
\[\frac{1}{N}\log \int \indi\{R^{\utt}_{1,*} \in [m,m+\epsilon)\}e^{-H(\u,\v)}\rmd\rho(\u,\v) - A \le u,\] 
and
\[\frac{1}{N} \log \int e^{-H(\u,\v)}\rmd\rho(\u,\v) - B \ge -u,\]
with probability at least $1-4e^{-Nu^2/(2\beta K_{\utt}^2 K_{\vtt}^2)}$. 
On the complement event we simply upper bound the fraction~\eqref{fraction_1} by 1. Therefore, we have
\begin{equation*}\label{fraction_2}
\E \left\langle \indi \{ R^{\utt}_{1,*} \in [m,m+\epsilon) \} \right\rangle \le \E_{\u^*,\v^*}\left[e^{N( A- B+2u)}\right] + 4e^{-Nu^2/(2\beta K_{\utt}^2 K_{\vtt}^2)}.
\end{equation*}
By Proposition~\ref{interpolation_bound_fixed_overlap} we have $A \le \frac{\Delta}{2\sigma_\utt^2K_\utt^2}  m^2 + \alpha\beta K_{\vtt}^2\epsilon$ deterministically over $\u^*$ and $\v^*$. Now it remains to control $\E_{\u^*,\v^*}\left[e^{-NB}\right]$.
\begin{lemma}\label{upperbound_B}
We have $\E_{\u^*,\v^*}\left[e^{-NB}\right] \le 2 e^{-N\E_{\u^*,\v^*}[B]}$.
\end{lemma}
Moreover, observe that
\begin{align*}
\E_{\u^*,\v^*}[B] &= \frac{1}{N} \E\log \int e^{-H(\u,\v)}\rmd\rho(\u,\v) \\
&=  \frac{1}{N} \E_{\P_{\beta}} \log L(\Y;\beta) \\
&= \frac{1}{N} \E_{\P_{0}} L(\Y;\beta)\log L(\Y;\beta) \ge 0.
\end{align*} 
Positivity is obtained by Jensen's inequality and convexity of $x \mapsto x\log x$. In view of the above, Lemma~\ref{upperbound_B} means that the random variable $B$ is ``essentially" positive. Therefore,
\[\E \left\langle \indi \{ R^{\utt}_{1,*} \in [m,m+\epsilon) \} \right\rangle \le 2e^{N(\delta+2u)} + 4e^{-Nu^2/(2\beta K_{\utt}^2 K_{\vtt}^2)},\]
where $\delta = \frac{\Delta}{2\sigma_\utt^2K_\utt^2}  m^2 + \alpha\beta K_{\vtt}^2\epsilon$.
We let $u = -\delta/3 \ge 0$, and $m = \epsilon'+l\epsilon$. Since $\Delta <0$, $\Delta m^2 \le \Delta \epsilon'^2$. 
Now we let $\epsilon = - \frac{\Delta}{4\alpha\beta\sigma_\utt^2K_\utt^2K_\vtt^2}  \epsilon'^2$ so that $\delta \le \frac{3\Delta}{4\sigma_\utt^2K_\utt^2} \epsilon'^2 <0$.
 \end{proof}

\vspace{.3cm}
\noindent\begin{proofof}{Lemma~\ref{upperbound_B}}
We abbreviate $\E_{\u^*,\v^*}$ by $\E$. We have
\[\E\left[e^{N(\E [B] - B)}\right] = \int_{-\infty}^{+\infty} e^t \Pr\left(N(\E [B] - B) \ge t\right) \rmd t \le 1+ \int_0^{+\infty} e^t \Pr\left(N(\E [B] - B) \ge t\right)\rmd t. \]
Now we bound the lower tail probability. The r.v.\ $B$, seen as a function of the vector $[\u^*|\v^*] \in \R^{N+M}$ is jointly convex (the Hessian can be easily shown to be positive semi-definite), and Lipschitz with constant $\beta K_\utt K_\vtt\sqrt{\frac{\alpha K_\utt^2+\alpha^2K_\vtt^2}{N}}$ with respect to the $\ell_2$ norm. Under the above conditions, a bound on the lower tail of deviation of $B$  is available; this is (one side of) Talagrand's inequality~\citep[see][Theorem 7.12]{boucheron2013concentration}. Therefore, we have for all $t\ge 0$
\[\Pr\left( B - \E [B]\le -t\right)\le e^{-Nt^2/2K^2},\] 
where $K^2 = \alpha\beta^2K_\utt^2 K_\vtt^2( K_\utt^2+\alpha K_\vtt^2)$. Thus,
\begin{align*}
\E\left[e^{N(\E [B] - B)}\right] &\le 1+ \int_0^{+\infty} e^t e^{-t^2/(2NK^2)}\rmd t\\
&= 1+ K\sqrt{N}e^{NK^2/2} \int_{K\sqrt{N}}^{+\infty} e^{-t^2/2}\rmd t\\
&\le 2. 
\end{align*}
The last inequality is a restatement of the fact $\Pr( g \ge t) \le \frac{e^{-t^2/2}}{\sqrt{2\pi}t}$ where $g \sim \normal(0,1)$.
\end{proofof}

\subsection{Convergence of the fourth moment}
In this section we prove that for all $\alpha,\beta$ such that $\alpha \beta^2 \sigma_\utt^2\sigma_\vtt^2K_\utt^2K_\vtt^2<1$, we have
\[\E\left \langle (R^{\utt}_{1,2})^4\right \rangle \vee \E\left \langle (R^{\vtt}_{1,2})^4\right \rangle \le \frac{K(\alpha,\beta)}{N^2}.\]
We proceed as follows. Let
\[M = \max\left\{\E\left \langle (R^{\utt}_{1,2})^4\right \rangle, \E\left \langle (R^{\vtt}_{1,2})^4\right \rangle \right\}.\]
We prove that for $\epsilon>0$, the following self-boundedness properties hold:
\begin{align}
\E\left \langle (R^{\utt}_{1,2})^4\right \rangle &\le \alpha \beta^2\E\left \langle (R^{\utt}_{1,2})^4\right \rangle + K\epsilon M + \delta \label{cavity_bound_fourth_moment_u},\\
\E\left \langle (R^{\vtt}_{1,2})^4\right \rangle &\le \alpha \beta^2\E\left \langle (R^{\vtt}_{1,2})^4\right \rangle + K\epsilon M + \delta,
\label{cavity_bound_fourth_moment_v}
\end{align} 
where $\delta \le K/N^2 + K/\epsilon^2 e^{-c(\epsilon)N}$. This implies the desired result by letting $\epsilon$ be sufficiently small (e.g., $\epsilon = (1-\alpha\beta^2)/2$). We prove~\eqref{cavity_bound_fourth_moment_u} and~\eqref{cavity_bound_fourth_moment_v} using the cavity method, i.e.\ by isolating the effect of the last variables $u_N$ and $v_M$, one at a time. We prove~\eqref{cavity_bound_fourth_moment_u} in full detail, then briefly highlight how~\eqref{cavity_bound_fourth_moment_v} is obtained in a similar way.        

By symmetry between the $\utt$ variables, we have
\begin{align*}
\E\left \langle (R^{\utt}_{1,*})^4\right \rangle &= \E\left \langle u^{(1)}_{N}u^{*}_{N}(R^{\utt}_{1,*})^3\right \rangle \\
&= \E\left \langle u^{(1)}_{N}u^{*}_{N}\Big(R^{\utt - }_{1,*} + \frac{1}{N}u^{(1)}_{N}u^{*}_{N}\Big)^3\right \rangle. 
\end{align*}
Expanding the term $\big(R^{\utt - }_{1,*} + \frac{1}{N}u^{(1)}_{N}u^{*}_{N}\big)^3$ we obtain
\begin{equation}\label{first_upper_bound_fourth_moment}
\E\left \langle (R^{\utt}_{1,*})^4\right \rangle \le \E\left \langle u^{(1)}_{N}u^{*}_{N}(R^{\utt - }_{1,*})^3\right \rangle + \frac{K_\utt^4}{N}\E\left \langle (R^{\utt - }_{1,*})^2 \right \rangle + \frac{K_\utt^6}{N^2}\E\left \langle \big|R^{\utt - }_{1,*}\big| \right \rangle + \frac{K_\utt^8}{N^3}.
\end{equation}
We have already proved convergence of the second moment (Proposition~\ref{second_moment_planted}), hence $\E \langle (R^{\utt - }_{1,*})^2  \rangle \le K/N$ and $\E \langle |R^{\utt - }_{1,*}|  \rangle \le K/\sqrt{N}$. Now we need to control the leading term involving $(R^{\utt - }_{1,*})^3$. The next proposition shows that this quantity can be related back to $(R^{\utt}_{1,*})^4$, plus additional higher-order terms. This is is achieved through the cavity method.  
\begin{proposition}\label{self_consistent_fourth_moment_u} 
For $\alpha,\beta \ge 0$, there exists a constant $K = K(\alpha,\beta,K_{\utt},K_{\vtt}) >0$ such that
\begin{equation}\label{cavity_u_1}
\E\left \langle u^{(1)}_{N}u^{*}_{N}(R^{\utt-}_{1,*})^3\right \rangle = \beta\E\left \langle(R^{\utt}_{1,*})^3R^{\vtt}_{1,*}\right \rangle + \delta_1,
\end{equation}
where 
\[|\delta_1| \le K\sum_{a,b,c,d} \E\left \langle \abs{(R^{\utt-}_{1,*})^3R^{\vtt}_{a,b}R^{\vtt}_{c,d}}\right \rangle.\]
Moreover, 
\begin{equation}\label{cavity_v_1}
\E\left \langle(R^{\utt}_{1,*})^3R^{\vtt}_{1,*}\right \rangle  = \alpha\beta\E\left \langle(R^{\utt}_{1,*})^4\right \rangle + \delta_2,
\end{equation}
where
\[|\delta_2| \le K\sum_{a,b,c,d} \E\left \langle \abs{(R^{\utt}_{1,*})^3R^{\utt}_{a,b}R^{\utt}_{c,d}}\right \rangle.\]
\end{proposition}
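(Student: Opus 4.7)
\textbf{Proof plan for Proposition~\ref{self_consistent_fourth_moment_u}.} Both identities are established by cavity-method interpolations: identity~\eqref{cavity_u_1} by isolating the spin $u_N$ and identity~\eqref{cavity_v_1} by isolating the spin $v_M$. In each case, we set up a Hamiltonian $-H_t$ in which the coupling of the cavity spin is scaled by $t\in[0,1]$, define a scalar function $\varphi(t)$ whose value at $t=1$ is the quantity of interest, Taylor-expand around $t=0$, and bound the remainder uniformly in $t$.

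For~\eqref{cavity_u_1}, use the same interpolating Hamiltonian that isolates $u_N$ as in Appendix~\ref{sxn:proof_of_asymptotic_decoupling}, and set $\varphi(t) = \nu_t\bigl(u_N^{(1)} u_N^* (R^{\utt-}_{1,*})^3\bigr)$. At $t=0$ the spin $u_N$ is decoupled from all other $u$-coordinates, from the $v$-coordinates, and from $\Y$, so the Gibbs average factorizes and $\varphi(0) = \E_{u\sim P_\utt}[u]\, u_N^*\,\nu_0^-((R^{\utt-}_{1,*})^3)= 0$ by the centering of $P_\utt$. Differentiating via a variant of Lemma~\ref{derivative_gibbs_average} adapted to the planted model (i.e., with the additional ``$*$" replica), $\varphi'(t)$ becomes a finite combination of terms of the form
\[
\pm\,\beta\,\nu_t\!\left(R^\vtt_{a,b}\, u_N^{(a)}u_N^{(b)}\cdot u_N^{(1)}u_N^*(R^{\utt-}_{1,*})^3\right),
\qquad (a,b)\in\{(1,*),(2,*),(1,2),(2,3),\ldots\}.
\]
At $t=0$ each such term factorizes as an expectation over i.i.d.\ copies of $u_N$ times $\nu_0^-(R^\vtt_{a,b}(R^{\utt-}_{1,*})^3)$; using $\E[u]=0$ and $\E[u^2]=1$, every term vanishes except $(a,b)=(1,*)$, which gives
\[
\varphi'(0) \;=\; \beta\,\nu_0^-\!\bigl(R^\vtt_{1,*}(R^{\utt-}_{1,*})^3\bigr).
\]

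Next, replace $\nu_0^-$ by $\nu$ and $R^{\utt-}_{1,*}$ by $R^\utt_{1,*}$. The first substitution costs $O(1/N)$-type perturbations of the Gibbs measure and the second costs $R^\utt_{1,*}-R^{\utt-}_{1,*}=\frac{1}{N}u_N^{(1)}u_N^*$; after expansion these produce only terms of the same structure as the target error $\delta_1$ or terms that are $O(1/N^2)$, which we absorb. The remainder $|\varphi(1)-\varphi(0)-\varphi'(0)|\le \tfrac12\sup_t|\varphi''(t)|$ is controlled by differentiating once more: each extra derivative in $t$ brings down a factor $\beta R^\vtt_{c,d}u_N^{(c)}u_N^{(d)}$, so $|\varphi''(t)|$ is a finite sum of terms
\[
\beta^2\,\nu_t\!\Bigl(\bigl|R^\vtt_{a,b}R^\vtt_{c,d}(R^{\utt-}_{1,*})^3\bigr|\cdot(\text{bounded product of }u_N\text{'s})\Bigr),
\]
which by Lemma~\ref{bound_time_dependent_average} and $|u_N|\le K_\utt$ is bounded by $K\sum_{a,b,c,d}\nu(|R^\vtt_{a,b}R^\vtt_{c,d}(R^{\utt-}_{1,*})^3|)$, exactly the form stipulated for $\delta_1$.

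For~\eqref{cavity_v_1}, use the symmetry between the $v_j$'s to write
$\E\langle (R^\utt_{1,*})^3 R^\vtt_{1,*}\rangle = \tfrac{M}{N}\E\langle v_M^{(1)} v_M^*(R^\utt_{1,*})^3\rangle$,
and run the cavity argument on $v_M$ with the interpolating Hamiltonian that scales the $v_M$-couplings by $t$ (the same one used in Appendix~\ref{sxn:proof_of_asymptotic_decoupling}). Setting $\varphi(t) = \tfrac{M}{N}\nu_t(v_M^{(1)} v_M^*(R^\utt_{1,*})^3)$, one gets $\varphi(0)=0$ and, by the same factorization argument (only the $(1,*)$-pairing survives), $\varphi'(0) = \beta\tfrac{M}{N}\nu_0((R^\utt_{1,*})^4)$. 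Passing from $\nu_0$ to $\nu$ and using $M/N=\alpha+O(1/N)$ yields $\varphi(1) = \alpha\beta\,\nu((R^\utt_{1,*})^4) + \delta_2$ with $|\delta_2|$ controlled by $\sup_t|\varphi''(t)|\lesssim \sum_{a,b,c,d}\nu(|R^\utt_{a,b}R^\utt_{c,d}(R^\utt_{1,*})^3|)$, matching the claimed bound.

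The main technical nuisance, rather than any single hard estimate, is the bookkeeping in computing $\varphi'(0)$: one must carefully enumerate the terms produced by the planted analogue of Lemma~\ref{derivative_gibbs_average}, verify that all but the $(1,*)$-pairing vanish by the zero-mean and unit-variance properties of $P_\utt$ (resp.\ $P_\vtt$), and check that the subsequent replacement of $\nu_0^-$, $R^{\utt-}_{1,*}$ by $\nu$, $R^\utt_{1,*}$ generates only errors of the form allowed in $\delta_1$ (resp.\ $\delta_2$) or of order $1/N^2$, which are negligible in the final application of this proposition in~\eqref{first_upper_bound_fourth_moment}.
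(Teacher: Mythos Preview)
Your proposal is correct and follows essentially the same route as the paper: the same two cavity interpolations (isolating $u_N$ for~\eqref{cavity_u_1} and $v_M$ for~\eqref{cavity_v_1}), the same second-order Taylor expansion of $\nu_t(f)$ around $t=0$, the identification of the surviving $(1,*)$ term in $\varphi'(0)$ via centering and unit variance, the passage from $\nu_0$ to $\nu$ using Lemmas~\ref{derivative_gibbs_average}--\ref{bound_time_dependent_average}, and the bound on $\varphi''$ as a finite sum of fifth-order overlap monomials. The only cosmetic difference is that the paper swaps $R^{\utt-}_{1,*}$ for $R^{\utt}_{1,*}$ directly under $\nu_0$ (where $u_N$ is decoupled) before moving to $\nu$, whereas you handle both substitutions together; the resulting errors are of the same form.
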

From Proposition~\ref{self_consistent_fourth_moment_u} we deduce 
\[\E\left \langle u^{(1)}_{N}u^{*}_{N}(R^{\utt-}_{1,*})^3\right \rangle =\alpha\beta^2\E\left \langle(R^{\utt}_{1,*})^4\right \rangle + \delta,\]
where $\delta = \delta_1 + \delta_2$. Plugging into~\eqref{first_upper_bound_fourth_moment}, we obtain
\[\E\left \langle (R^{\utt}_{1,*})^4\right \rangle \le \alpha\beta^2\E\left \langle(R^{\utt}_{1,*})^4\right \rangle + \frac{K}{N^2} + \delta.\] 
Now we need to control the error term $\delta$, which involves monomials of degree $5$ in the overlaps $R^\utt$ and $R^\vtt$. This is where the a priori bound on the convergence of the overlaps, Proposition~\ref{convergence_in_probability}, is useful. Since the overlaps are bounded, we can write for any $\epsilon >0$,
\begin{align*}
\E\left \langle \abs{(R^{\utt}_{1,*})^3R^{\vtt}_{a,b}R^{\vtt}_{c,d}}\right \rangle &\le \epsilon \E\left \langle \abs{(R^{\utt}_{1,*})^3R^{\vtt}_{a,b}}\right \rangle + K_\utt^6K_\vtt^4 \E\left \langle \indi\{\abs{R^{\vtt}_{c,d}} \ge \epsilon\} \right \rangle\\
&= \epsilon \E\left \langle \abs{(R^{\utt}_{1,*})^3R^{\vtt}_{a,b}}\right \rangle + K_\utt^6K_\vtt^4 \E\left \langle \indi\{\abs{R^{\vtt}_{1,*}} \ge \epsilon\} \right \rangle,
\end{align*}
where the last line is a consequence of the Nishimori property. Now we use H\"older's inequality on the first term:
\begin{align*}
\E\left \langle \abs{(R^{\utt}_{1,*})^3R^{\vtt}_{a,b}}\right \rangle &\le \left(\E\left \langle \abs{(R^{\utt}_{1,*})^4}\right \rangle\right)^{3/4} \left(\E\left \langle \abs{(R^{\vtt}_{a,b}})^4\right \rangle\right)^{1/4} \\
&=\left(\E\left \langle \abs{(R^{\utt}_{1,*})^4}\right \rangle\right)^{3/4} \left(\E\left \langle \abs{(R^{\vtt}_{1,*}})^4\right \rangle\right)^{1/4}\\
&\le M.
\end{align*}
Using Proposition~\ref{convergence_in_probability}, we have $\E \langle \indi\{|R^{\vtt}_{1,*}| \ge \epsilon\}  \rangle \le K e^{-cN}/\epsilon^2$. Therefore,
\[|\delta_1| \le K\epsilon M + \frac{K}{\epsilon^2} e^{-cN}.\]
It is clear that we can use the same argument to bound $\delta_2$, so we end up with
\[\E\left \langle (R^{\utt}_{1,*})^4\right \rangle \le \alpha\beta^2\E\left \langle(R^{\utt}_{1,*})^4\right \rangle + \frac{K}{N^2} +  K\epsilon M + \frac{K}{\epsilon^2} e^{-cN},\] 
thereby proving~\eqref{cavity_bound_fourth_moment_u}. To prove~\eqref{cavity_bound_fourth_moment_v} we use the same approach. We write
\begin{align*}
\E\left \langle (R^{\vtt}_{1,*})^4\right \rangle &= \frac{M}{N}\E\left \langle v^{(1)}_{M}v^{*}_{M}(R^{\vtt}_{1,*})^3\right \rangle\\
&= \alpha\E\left \langle v^{(1)}_{M}v^{*}_{M}\Big(R^{\vtt-}_{1,*} + \frac{1}{N}v^{(1)}_{M}v^{*}_{M}\Big)^3\right \rangle.
\end{align*}
Then use an equivalent of Proposition~\ref{self_consistent_fourth_moment_u} in this case, which is obtained by flipping the role of the $\utt$ and $\vtt$ variables: 
\[\E\left \langle v^{(1)}_{N}v^{*}_{N}(R^{\vtt-}_{1,*})^3\right \rangle = \beta\E\left \langle(R^{\vtt}_{1,*})^3R^{\utt}_{1,*}\right \rangle + \delta_1,\] 
and
\[\E\left \langle (R^{\vtt}_{1,*})^3R^{\utt}_{1,*}\right \rangle = \beta\E\left \langle(R^{\vtt}_{1,*})^4\right \rangle + \delta_2,\]
where $\delta_1$ and $\delta_2$ are similarly bounded by expectations of monomials of degree $5$ in the overlaps $R^\utt$ and $R^\vtt$. These two quantities are then bounded in exactly the same way.   

\vspace{.5cm}
\noindent\begin{proofof}{Proposition~\ref{self_consistent_fourth_moment_u}}
The proof uses two interpolations; the first one decouples the variable $u_N$ from the rest of the system and allows to obtain~\eqref{cavity_u_1}, and the second one decouples the variable $v_M$ and allows to obtain~\eqref{cavity_v_1}. We start with the former. 

\vspace{.3cm}
\noindent\textbf{Proof of~\eqref{cavity_u_1}.} Consider the interpolating Hamiltonian 
\begin{align*}
-H_t(\u,\v) &=  \sum_{i=1}^{N-1} \sum_{j=1}^{M} \sqrt{\frac{\beta}{N}}W_{ij}u_i v_j + \frac{\beta}{N}u_iu_i^*v_jv_j^*- \frac{\beta}{2N}u_i^2v_j^2 \\
&~~+  \sum_{j=1}^M \sqrt{\frac{\beta t}{N}}W_{Nj}u_N v_j + \frac{\beta t}{N}u_Nu_N^*v_jv_j^* - \frac{\beta t}{2N}u_N^2v_j^2,
\end{align*}
and let $\langle \cdot \rangle_t$ be the associated Gibbs average and $\nu_t(\cdot) = \E\langle \cdot \rangle_t$. The idea is to approximate $\nu_1(f)$ where $f \equiv u^{(1)}_{N}u^{*}_{N}(R^{\utt-}_{1,*})^3$ by $\nu_0(f) + \nu_0'(f)$. Of course one then has to control the second derivative, as dictated by Taylor's approximation
\begin{equation}\label{taylor_expansion_second_order}
\abs{\nu_1(f) - \nu_0(f) - \nu_0'(f)} \le \sup_{0 \le t\le 1} \abs{\nu_t''(f)}.
\end{equation}

We see that at time $t=0$, the variables $u_N$ and $u_N^*$ decouple the Hamiltonian, so  
\begin{equation}\label{value_of_average_at_0}
\nu_0(u^{(1)}_{N}u^{*}_{N}(R^{\utt-}_{1,*})^3) = \E[u_N]\E[u^*_N]  \nu_0((R^{\utt-}_{1,*})^3) = 0.
\end{equation}
On the other hand, by applying Lemma~\ref{derivative_gibbs_average} with $n=1$, we see that $\nu_0'(u^{(1)}_{N}u^{*}_{N}(R^{\utt-}_{1,*})^3)$ is a sum of a few terms of the form
\[\nu_0\big(u^{(1)}_{N}u^{*}_{N}u^{(a)}_{N}u^{(b)}_{N}(R^{\utt-}_{1,*})^3R^{\vtt}_{a,b}\big).\]
Since $P_{\utt}$ has zero mean, all terms in which a variable $u_N^{(a)}$ (for any $a$) appears with degree 1 vanish. We are thus left with one term where $a=1,b=*$, and we get 
\begin{equation}\label{value_of_derivative_at_0}
\nu_0'(u^{(1)}_{N}u^{*}_{N}(R^{\utt-}_{1,*})^3) = \beta\E[(u_N^{(1)})^2]\E[(u_N^{*})^2]  \nu_0((R^{\utt-}_{1,*})^3R^{\vtt}_{1,*}) = \beta\nu_0((R^{\utt-}_{1,*})^3R^{\vtt}_{1,*}).
\end{equation}
Moreover, we see that $\nu_0((R^{\utt-}_{1,*})^3R^{\vtt}_{1,*}) = \nu_0((R^{\utt}_{1,*})^3R^{\vtt}_{1,*})$ since the last variable $u_N$ has no contribution under $\nu_0$. Now we are tempted to replace the average at time $t=0$ by an average at time $t=1$ in the last quantity.
We use Lemmas~\ref{derivative_gibbs_average} and~\ref{bound_time_dependent_average} to justify this. Indeed these lemmas and boundedness of the variables $u_N$ imply
\begin{equation}\label{replace_time_0_to_1}
\abs{\nu_0((R^{\utt}_{1,*})^3R^{\vtt}_{1,*}) - \nu_1((R^{\utt}_{1,*})^3R^{\vtt}_{1,*})} \le  K(\alpha,\beta)\sum_{a,b}\nu(\abs{(R^{\utt}_{1,*})^3R^{\vtt}_{1,*}R^{\vtt}_{a,b})}),
\end{equation}
where $(a,b) \in \{(1,2),(1,*),(2,*),(2,3)\}$. Now we control the second derivative $\sup_t\nu_t''(\cdot)$. In view of Lemma~\ref{derivative_gibbs_average}, we see that taking two derivative of $\nu_t(u^{(1)}_{N}u^{*}_{N}(R^{\utt-}_{1,*})^3)$ creates terms of the form
\[\nu_t\left(u^{(1)}_{N}u^{*}_{N}u^{(a)}_{N}u^{(b)}_{N}u^{(c)}_{N}u^{(d)}_{N}(R^{\utt-}_{1,*})^3R^{\vtt}_{a,b}R^{\vtt}_{c,d}\right),\]
with a larger (but finite) set of combinations $(a,b,c,d)$.
We use Lemma~\ref{bound_time_dependent_average} to replace $\nu_t$ by $\nu_1$ and use boundedness of variables $u_N$ to obtain the bound
\begin{equation}\label{bound_second_derivative}
\abs{\sup_{0 \le t\le 1} \nu_t''\Big(u^{(1)}_{N}u^{*}_{N}(R^{\utt-}_{1,*})^3)\Big)} \le K(\alpha,\beta)\sum_{a,b,c,d} \nu\Big(\Big|(R^{\utt-}_{1,*})^3R^{\vtt}_{a,b}R^{\vtt}_{c,d}\Big|\Big).
\end{equation}
Now putting the bounds and estimates~\eqref{taylor_expansion_second_order}, \eqref{value_of_average_at_0}, \eqref{value_of_derivative_at_0}, \eqref{replace_time_0_to_1}, and \eqref{bound_second_derivative}, we obtain the desired bound~\eqref{cavity_u_1}:
\[\abs{\nu\big(u^{(1)}_{N}u^{*}_{N}(R^{\utt-}_{1,*})^3\big) - \beta \nu((R^{\utt}_{1,*})^3R^{\vtt}_{1,*})} \le K(\alpha,\beta)\sum_{a,b,c,d} \nu\Big(\Big|(R^{\utt-}_{1,*})^3R^{\vtt}_{a,b}R^{\vtt}_{c,d}\Big|\Big).\] 

%\vspace{.3cm}
\noindent\textbf{Proof of~\eqref{cavity_v_1}.} By symmetry of the $\vtt$ variables we have
\[\E\left\langle(R^{\utt}_{1,*})^3R^{\vtt}_{1,*}\right\rangle = \frac{M}{N}\E\left\langle(R^{\utt}_{1,*})^3v_M^{(1)}v_M^{*}\right\rangle.\]
Now we apply the same machinery. Consider the interpolating Hamiltonian 
\begin{align*}
-H_t(\u,\v) &= \sum_{j=1}^{ M-1} \sum_{i=1}^N \sqrt{\frac{\beta}{N}}W_{ij}u_i v_j + \frac{\beta}{N}u_iu_i^*v_jv_j^*- \frac{\beta}{2N}u_i^2v_j^2 \\
&~~+  \sum_{i=1}^N \sqrt{\frac{\beta t}{N}}W_{iM}u_i v_M + \frac{\beta t}{N}u_iu_i^*v_Mv_M^*- \frac{\beta t}{2N}u_i^2v_M^2,
\end{align*}
and let $\langle \cdot \rangle_t$ be the associated Gibbs average and $\nu_t(\cdot) = \E\langle \cdot \rangle_t$. The exact same argument goes through with the roles of $\utt$ and $\vtt$ flipped. For instance, when one takes time derivatives, terms of the form $v_M^{(a)}v_M^{(b)}R^{\utt}_{a,b}$ arise from the Hamiltonian, and one sees that 
\[\nu_0'\big((R^{\utt}_{1,*})^3v_M^{(1)}v_M^{*}\big) = \beta\nu_0\big((R^{\utt}_{1,*})^4\big).\]
 Thus we similarly obtain
\[\abs{\nu\big((R^{\utt}_{1,*})^3v_M^{(1)}v_M^{*}\big) - \beta \nu\big((R^{\utt}_{1,*})^4\big)} \le K(\alpha,\beta)\sum_{a,b,c,d} \nu\Big(\Big|(R^{\utt}_{1,*})^3R^{\utt}_{a,b}R^{\utt}_{c,d}\Big|\Big).\]  
\end{proofof}

\end{document}